\documentclass{article}
\usepackage{graphicx} 
\usepackage{mathrsfs}
\usepackage{xcolor}
\usepackage{soul}
\usepackage{amsmath}
\usepackage{amsthm}
\usepackage{amssymb}
\usepackage{mathtools}
\usepackage{tikz}
\usetikzlibrary{cd,arrows.meta, decorations.pathmorphing, calc}
\newtheorem{theorem}{Theorem}[section]
\newtheorem{lemma}[theorem]{Lemma}
\newtheorem{proposition}[theorem]{Proposition}
\newtheorem{corollary}[theorem]{Corollary}

\newtheorem{assumption}[theorem]{Assumption}
\newtheorem*{theorem*}{Theorem}
\newtheorem{remark}[theorem]{Remark}
\newtheorem{example}[theorem]{Example}
\usepackage[hidelinks]{hyperref}
\usepackage[a4paper, total={6in, 8in}]{geometry}
\usepackage{stmaryrd}

\newcommand{\SigO}{\sigma(-\Delta_D(\Omega_-))}  

\newcommand{\intt}{\mathrm{int}}
\newcommand{\bH}{\mathbb{H}}
\newcommand{\bA}{\mathbb{A}}
\newcommand{\R}{\mathbb{R}}
\newcommand{\N}{\mathbb{N}}
\newcommand{\C}{\mathbb{C}}
\newcommand{\Hol}{H^{1,\mathrm{loc}}}
\newcommand{\Lloc}{L^{2}_{\mathrm{loc}}}
\newcommand{\Lcomp}{L^{2}_{\mathrm{comp}}}

\newcommand{\ri}{\mathrm{i}}
\newcommand{\cS}{\mathcal{S}}
\newcommand{\cA}{\mathcal{A}}
\newcommand{\sA}{\mathscr{A}}
\newcommand{\cH}{\mathcal{H}}

\newcommand{\cM}{\mathcal{M}}
\newcommand{\cB}{\mathcal{B}}

\newcommand{\supp}{\mathrm{supp}}

\newcommand{\rd}{\mathrm{d}}
\newcommand{\re}{\mathrm{e}}

\newcommand{\rea}{\mathrm{Re}\,}

\newcommand{\Trc}{\mathrm{Tr}}
\newcommand{\Trcr}{\mathrm{tr}}
\newcommand{\dimH}{\dim_H}

\newcommand{\Acal}{\mathcal{A}}
\newcommand{\Bcal}{\mathcal{B}}

\newcommand{\dx}{\mathrm dx}
\newcommand{\dy}{\mathrm dy}

\newcommand{\hookdoubleheadrightarrow}{\hookrightarrow\mathrel{\mspace{-15mu}}\rightarrow}
\newcommand{\capp}{\mathrm{cap}}

\renewcommand{\thefootnote}{\fnsymbol{footnote}}

\title{Integral Equation Methods for Scattering\\ by Multifractal Obstacles
}
\author{S. N. Chandler-Wilde\footnotemark[1], G. Claret\footnotemark[2], D. P. Hewett\footnotemark[3],\\ A. Rozanova-Pierrat\footnotemark[2],  and S. Sadeghi\footnotemark[1]}

\begin{document}

\maketitle

\footnotetext[1]{Department of Mathematics and Statistics, University of Reading, Whiteknights PO Box 220, Reading RG6 6AX, UK Emails: \tt{s.n.chandler-wilde@reading.ac.uk}, \tt{s.sadeghi@pgr.reading.ac.uk}}
\footnotetext[2]{CentraleSup\'elec, Universit\'e Paris-Saclay, 9 Rue Joliot Curie, 91190 Gif-sur-Yvette, France Emails: \tt{gabriel.claret@centralesupelec.fr}, \tt{anna.rozanova-pierrat@centralesupelec.fr}}
\footnotetext[3]{Department of Mathematics, University College London, 25 Gordon Street, London WC1H 0AY, UK Email: \tt{d.hewett@ucl.ac.uk}}

\renewcommand{\thefootnote}{\arabic{footnote}}
\setcounter{footnote}{0}

\newcommand{\sing}[1]{\int_{\Gamma}G_k(x-y)#1(y)\,d\sigma(y)}

\newcommand{\doub}[1]{\int_{\Gamma}\frac{\partial G_k(x-y)}{\partial \nu(y)}#1(y)\,d\sigma(y)}

\newcommand{\locsob}[1]{H^{1,\text{loc}}(#1)}

\newcommand{\tr}[1]{\text{tr}_\Gamma(#1)}

\newcommand{\Tr}[1]{\text{Tr}_{#1}}

\newcommand{\trad}[1]{\text{tr}^*_\Gamma(#1)}

\vspace{-5ex}

\begin{quote}
\small   Caetano et al.\ ({\em Proc. R. Soc. A.} {\bf 481}:20230650, 2025) have proposed a formulation for sound-soft acoustic scattering by a compact scatterer $O\subset \R^n$, in which the scattered field is represented as an acoustic Newtonian potential whose density is the solution of an operator equation on a compact set $\Gamma \subset O$. In the case that $\Gamma$ is Ahlfors-David $d$-regular (a $d$-set), for some $d\in (n-2,n]$, they show, moreover, that the operator equation can be interpreted as an integral equation, the integration with respect to $d$-dimensional Hausdorff measure, and present a convergent Galerkin scheme for numerical computation. 
In this paper we make a substantial extension of these results  so that they apply to more realistic fractal scatterers that are multifractal, in the sense that they have spatially varying fractal dimension. Firstly, we provide, inspired by  Claret et al.\ ({\em J.~Math.~Pures Appl.}\ {\bf 212}:103888, 2026), an interpretation of this operator equation as an equation between a trace space on $\Gamma$ and its dual, and, in many cases, relate the density to a notion of the normal derivative of the scattered field on $\Gamma$. Secondly, we show that the operator equation is equivalent to an integral equation on $\Gamma$ 
whenever $\Gamma$ is the support of a Radon measure $\mu$ such that: (i) the trace operator from $H^1(\R^n)$ to  $L^2(\Gamma,\mu)$ is continuous and; (ii) certain canonical singular integrals with respect to $\mu$ are finite; and we characterise a large class of measures for which (i) and (ii) hold. Finally, we show that Galerkin methods based on 
finite element subspaces of $L^2(\Gamma,\mu)$ are convergent if and only if, additionally, $C_0^\infty(\R^n\setminus \Gamma)$  is dense in the kernel of the trace operator. 
These results apply, in particular, if $\Gamma$ is a finite union of $d$-sets with different values of $d$.  In the case that each $d$-set is the attractor of an iterated function system of contracting similarities, we establish rates of convergence for the Galerkin method. 
\end{quote}

\tableofcontents

\section{Introduction} \label{sec:intro}

In this paper we study the solution, by integral equation methods, of the exterior sound-soft scattering problem for the Helmholtz equation in  the case where the scatterer $O$ is an arbitrary compact set, for example a union of compact sets each having a different fractal dimension (see Figure \ref{Fig:Setting}). In more detail, given some wavenumber $k>0$ and some compact obstacle $O\subset \R^n$, where $n\geq 2$ and
$$
\Omega:= O^c := \R^n\setminus O
$$ 
is connected, and given some incident field $u^{\mathrm{inc}}\in \Hol(\R^n)$ that satisfies $\Delta u^{\mathrm{inc}}+k^2u^{\mathrm{inc}}=0$
in some open neighbourhood
 of $O$, the {\em exterior sound-soft or Dirichlet scattering problem} is the problem of determining the scattered  field $u\in \Hol(\Omega)$ that satisfies the Helmholtz equation
\begin{equation} \label{eq:he}
\Delta u + k^2 u = 0 \quad \mbox{in} \quad \Omega,
\end{equation}
the Sommerfeld radiation condition (equation \eqref{eq:src} below), expressing that the scattered field is outgoing,
 and  the boundary condition  
 $$
 u^{\mathrm{tot}}:= u^{\mathrm{inc}}+u=0 \quad \mbox{on} \quad  \partial \Omega,
 $$ 
 in the sense that $u^{\mathrm{tot}}\in \Hol_0(\Omega)$. We will refer to $u^{\mathrm{tot}}$ as the {\em total field}, and direct the reader to \S\ref{sec:fs} for our function space notations.  
It is well known that this scattering problem is well--posed for all wavenumbers $k>0$ (one proof is via a standard weak formulation in $\Omega_R:= \{x\in \Omega:|x|<R\}$, e.g., \cite[Eqn.~(1.4)]{CWMonk2008}).

\begin{figure}[ht]
\centering
\begin{tikzpicture}[scale=0.8]
\draw[rotate=90, transform shape] (3,-5) node{\includegraphics[scale=0.7]{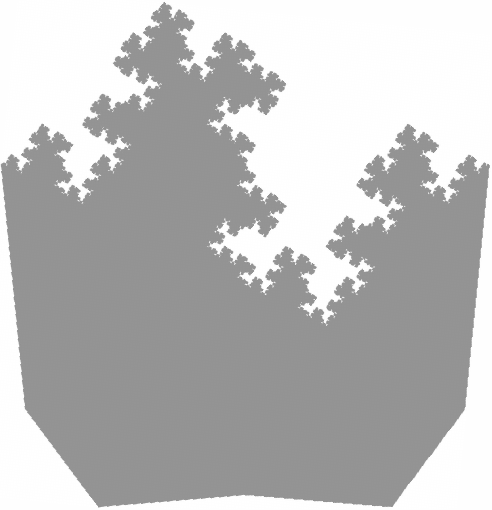}};
\draw[rotate=-15, transform shape] (2,0.7) node{\includegraphics[scale=0.5]{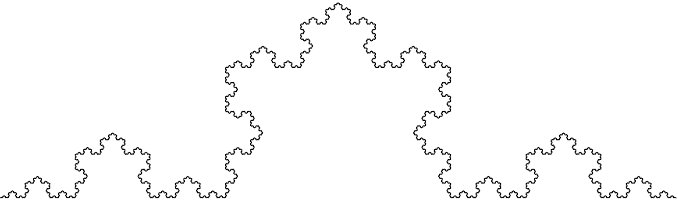}};
\draw (2.5,5.5) node {\Large $\Omega$} ;
\draw (7, 2) node {\large $O_1$} ;
\draw (-1.1,-1) node {\large $O_2$} ;
\draw[->, >=stealth] (-0.7,-1) parabola (0.35,-0.2) ;
\draw[->, blue!70!green, line width = 2pt] (-2,5) -- ++(2.5,-1.5) node[midway, circle, fill=white]{\large $u^{\mathrm{inc}}$} ;
\draw[->, blue!70!green, line width = 2pt] (-2+3*0.2,5+5*0.2) -- ++(2.5,-1.5) ;
\draw[->, blue!70!green, line width = 2pt] (-2-3*0.2,5-5*0.2) -- ++(2.5,-1.5) ;
\end{tikzpicture}
\caption{Illustration of the setting in a case where the obstacle $O$ is the union of two disjoint compact sets, $O_1$ and $O_2$. In the case shown $O_1$ has non-empty interior while the interior of $O_2$ is empty. The incident field propagates in $\Omega:= O^c$, the complement of $O$. The incident field $u^{\mathrm{inc}}$ is scattered by $O$, giving rise to the scattered field $u$. The total field $u^{\mathrm{tot}}=u^{\mathrm{inc}}+u$ vanishes on $\partial \Omega=\partial O=O_2\cup \partial O_1$.}
\label{Fig:Setting}
\end{figure}

Recently, Caetano et al.~\cite{caetano2023integral} have proposed a reformulation of this scattering problem as an operator equation on some subset of $O$, that applies for a general compact obstacle $O\subset \R^n$. Let $\Phi(\cdot,\cdot)$ denote the (outgoing) fundamental solution of the Helmholtz equation \eqref{eq:he}, given by \eqref{eq:Phidef} below, so that 
\begin{equation} \label{eq:Phidef3}
\Phi(x,y) = \frac{\re^{\ri k|x-y|}}{4\pi|x-y|},  \qquad x,y\in \R^3, \;\; x\neq y,
\end{equation}
in the physically important case $n=3$.
 Let $\cA\psi$ denote the acoustic Newtonian potential with density $\psi$, defined by
\begin{equation} \label{eq:Newt}
\mathcal{A}\psi(x)=\int_{\mathbb{R}^n}\Phi(x,y)\psi(y)\,\rd y, \qquad  x\in \R^n.
\end{equation}

To obtain the formulation of \cite{caetano2023integral} we first choose a compact set $\Gamma$ with
\begin{equation} \label{eq:GamRest}
\partial O \subset \Gamma \subset O.
\end{equation}
(This geometry, including the constraint \eqref{eq:GamRest}, is illustrated in Figure \ref{Fig:Gamma}.)
 Having selected $\Gamma$
we seek a solution to the scattering problem in the form $u|_\Omega$, where $u=\cA\phi$ for some $\phi\in H^{-1}_\Gamma:= \{\psi\in H^{-1}(\R^n):\supp(\psi)\subset \Gamma\}$. As we recall in  \S\ref{sec:IE}, this ansatz satisfies the scattering problem if $\phi$ satisfies an operator equation
\begin{equation} \label{eq:iemain}
A\phi = g,
\end{equation}
where $A$ is a continuous mapping from $H^{-1}_\Gamma$ to a closed subspace  of $H^1(\R^n)$ that is a natural realisation of the dual space $(H_\Gamma^{-1})^*$, and $g$ is, roughly speaking, the projection of $-u^{\mathrm{inc}}$ onto that subspace. 

We emphasise that each choice of a compact $\Gamma$ satisfying \eqref{eq:GamRest} leads to a distinct
formulation \eqref{eq:iemain}. As we recall from \cite{caetano2023integral,SiavashSimon2} in \S\ref{sec:IE}, for every $\Gamma$ satisfying \eqref{eq:GamRest} and all $k>0$ equation \eqref{eq:iemain} has a solution and $A$ is Fredholm of index zero. However, for a particular $k>0$, uniqueness of solution of \eqref{eq:iemain} depends on the choice of $\Gamma$ as we discuss in Remark \ref{rem:choice} below. In particular, if $\Gamma=O$, uniqueness holds for all $k>0$, so that \eqref{eq:iemain} has exactly one solution for all $k>0$.   

\begin{figure}[ht]
\centering
\begin{tikzpicture}[scale=0.8]
\draw[rotate=90, transform shape] (3,-5) node{\includegraphics[scale=0.7]{Mink6_CroppedBox_FilledNoLine.pdf}};
\draw[rotate=-15, transform shape] (2,0.7) node{\includegraphics[scale=0.5]{VK_10_Black.pdf}};
\filldraw[lightgray] (6, 1.5) ellipse (1.3 and 1) ;
\draw (6, 1.5) node {\large $\Omega_-$} ;
\draw (2.5,5.5) node {\Large $\Omega$} ;
\draw (7, 3.75) node {\large $\Gamma_1$} ;
\draw (-1,-1) node {\large $\Gamma_2$} ;
\draw[->, >=stealth] (-0.7,-1) parabola (0.35,-0.2) ;
\end{tikzpicture}
\caption{Illustration of one choice of a compact set $\Gamma$ satisfying~\eqref{eq:GamRest} in the case that $O = O_1\cup O_2$ is as in Figure~\ref{Fig:Setting}. The compact set $\Gamma$ is the disjoint union of two compacts sets, $\Gamma_2=O_2$ and $\Gamma_1$ which satisfies $\partial O_1\subset\Gamma_1\subset O_1$ so that $\Omega_-:=O\setminus\Gamma=O_1\setminus\Gamma_1$ and $O_1 = \Gamma_1\cup \Omega_-$. The compact set $\Gamma_1$ is shaded in dark grey and the open set $\Omega_-$ in light grey.}
\label{Fig:Gamma}
\end{figure}

From a theoretical perspective, these new  formulations provide an alternative route, starting from Theorem \ref{thm:invert} below, to proving existence of solution to the scattering problem, generalising proofs of existence of solution via integral equation methods that are well established in the case that the scatterer has a Lipschitz boundary (e.g., \cite[Thm.~9.11]{mclean2000strongly}). These new formulations are also very promising as a starting point for  computations, in particular in cases where the scatterer is fractal or has fractal boundary.
A key result of  \cite{caetano2023integral} is that, in the special case that $\Gamma$ is uniformly $d$-dimensional, for some $d\in (n-2,n]$ (precisely, $\Gamma$ is a $d$-set as defined
 in \S\ref{sec:meas} below), the equation \eqref{eq:iemain} can be reformulated as an integral equation, with integration with respect to  $d$-dimensional Hausdorff measure, $\cH^d$. In particular, in the case that $\Omega$ is a Lipschitz domain and 
$\Gamma = \partial O=\partial \Omega$, 
 \eqref{eq:iemain} is equivalent to a standard first kind boundary integral equation, with integration with respect to surface measure (which coincides with the measure $\cH^{n-1}$); see \cite[Rem.~3.17]{caetano2023integral} or Remark \ref{rem:kn1} below. This interpretation of \eqref{eq:iemain} as an integral equation leads to the possibility of numerical solution by Galerkin methods, where the Galerkin matrix is constructed by integration with respect to the measure $\cH^d$. Indeed, in the $d$-set case, convergence of such a Galerkin method, based on a piecewise-constant approximation, was proved in  \cite{caetano2023integral}. Moreover, numerical results were reported and rates of convergence established in the case when $\Gamma$ is the attractor of an iterated function system (IFS) of contracting similarities,  as defined in \S\ref{sec:meas}, satisfying the standard open set condition (OSC).

The purpose of the current paper is to investigate the extent to which the results of \cite{caetano2023integral}, described
in the above paragraph, extend to an arbitrary compact obstacle $O\subset \R^n$ and to an
arbitrary compact $\Gamma$ satisfying \eqref{eq:GamRest}. Our motivation is to understand and compute scattering
 by fractal obstacles 
 that have components of different fractal dimension or a fractal dimension that is otherwise spatially varying.
Clearly, for this programme to be possible, we need a measure on $\Gamma$; our standing assumption in \S\ref{sec:ie} and \S\ref{sec:GM} will be that $\Gamma$ is the support of some (positive) Radon measure $\mu$. Main results we establish are sufficient conditions on $\Gamma$ and $\mu$ which:
\begin{enumerate}
\item[(i)] \label{item:i} allow $u=\cA\phi$ to be written as an integral on $\Gamma$ with respect to the measure $\mu$ (Lemma \ref{lem:Aref}); 
\item[(ii)]  \label{item:ii} allow the operator equation \eqref{eq:iemain} to be written as an integral equation (where integration is with respect to the measure $\mu$) (Theorem \ref{prop:interep}); 
\item[(iii)] \label{item:iii} enable the coefficients in the Galerkin matrix to be written as integrals with respect to $\mu$ (Proposition \ref{prop:coeff}); 
\item[(iv)] ensure that the Galerkin method is convergent (Corollary \ref{cor:GM}).
\end{enumerate}  
\noindent  To a large extent these main results also tease out necessary conditions on $\Gamma$ and $\mu$ for the above properties to hold. 
In particular, we will show in \S\ref{sec:trL2} that a sufficient condition for all of (i)--(iii) to hold is that the measure $\mu$ is upper $d$-regular on $\Gamma$, in the sense of \eqref{eq:udr}, for some $d\in (n-2,n]$. 

The above results depend on a detailed understanding of 
the trace operator $\widetilde \Trc_\Gamma:H^1(\R^n)\to L^2(\Gamma,\mu)$ and its adjoint $\widetilde \Trc_\Gamma^*:L^2(\Gamma,\mu)\to H^{-1}(\R^n)$, defined in \S\ref{sec:trL2} below. In our Galerkin method, generalising the scheme in \cite{caetano2023integral}, our approximate solution takes the form $\widetilde \Trc_\Gamma^* \widetilde \phi_N$, where $\widetilde \phi_N\in L^2(\Gamma,\mu)$ is piecewise constant; this construction assumes that the operator $\widetilde \Trc_\Gamma^*:L^2(\Gamma,\mu)\to H^{-1}(\R^n)$ is well-defined and continuous, equivalently that the same holds for $\widetilde \Trc_\Gamma:H^1(\R^n)\to L^2(\Gamma,\mu)$, which is our Assumption \ref{ass:cont}. 

This assumption is already enough to guarantee (i), i.e., Lemma \ref{lem:Aref}. Further, assuming Assumption \ref{ass:cont}, and that $h_N\to 0$ as $N\to\infty$, where $h_N$ is the 
maximum element diameter for the piecewise-constant approximation $\widetilde \phi_N$, we show in Corollary \ref{cor:GM} that the Galerkin method is convergent 
 if and only if $C_0^\infty(\Gamma^c)$ is dense in  $\ker \widetilde \Trc_\Gamma$, the kernel of $\widetilde \Trc_\Gamma$, where $\Gamma^c:=\R^n\setminus \Gamma$. 
A further main result of the paper, building on results announced in \cite[\S5]{Hinz}, is to characterise this property of $\ker \widetilde \Trc_\Gamma$ (see Proposition \ref{prop:equiv}), in particular showing its equivalence  to Assumption \ref{ass:Gabriel} below, a subtle property of the measure on $\Gamma$, that a function vanishes $\mu$-a.e.~on $\Gamma$ if and only if it vanishes quasi-everywhere (in the usual sense of \S\ref{sec:meas}). Sufficient conditions for this property to hold have been obtained recently by Hinz et al.\ \cite{Hinz}.

Applying the results from \cite{Hinz}, we show that all of (i)-(iv) hold if $\Gamma$ is a finite union of $d$-sets (with different values of $d$), and $\mu$ is correspondingly a sum of Hausdorff measures of different dimensions, supported on the corresponding subsets of $\Gamma$ (see Theorem \ref{thm:summ}). In the final parts of \S\ref{sec:GM} we consider the special case that, additionally, these different subsets of $\Gamma$ are disjoint, in which case we prove a regularity result for the solution of \eqref{eq:iemain} and rates of convergence for the Galerkin method.

Let us briefly summarise the rest of the paper. In \S\ref{sec:IE} we recall the results from Caetano et al.\ \cite{caetano2024integral} that we need, as refined recently in \cite{SiavashSimon2}. In \S\ref{sec:trace}, as a new contribution, we reformulate our main operator equation \eqref{eq:iemain} as an equation in trace spaces on $\Gamma$ (Theorem \ref{thm:trace}). This is an important starting point for our main results summarised above. We also, in \S\ref{sec:normal}, show that the solution to this reformulated operator equation can be interpreted as a normal derivative of the total field, defined in a weak sense, in the case that $\Omega$ is an extension domain (which includes many cases where $\partial \Omega=\partial O$ is fractal). 
In \S\ref{sec:ie} we study the trace operator $\widetilde \Trc_\Gamma^*$ and prove the results at (i) and (ii) above and the characterisations of $\ker \widetilde \Trc_\Gamma$ in Proposition \ref{prop:equiv}. In \S\ref{sec:GM1} we introduce our Galerkin method and prove the results at (iii) and (iv) above. In \S\ref{sec:onset} we analyse a Galerkin method implementation for the case where $\partial O$ is a $d$-set, for some $d<n$, and $\intt(O)$, the interior of $O$, is an open $n$-set as defined in \S\ref{sec:onset}. Because it has guaranteed convergence for all $k>0$ and asymptotically the same computational cost as standard Galerkin boundary element methods, this scheme may be of interest for numerical computation even in the standard case where $\Omega$ is Lipschitz so that $d=n-1$ (see Remark \ref{remkn3}). We establish our results on regularity of the solution in \S\ref{sec:reg} and use these to prove convergence rates in \S\ref{sec:conv}.
So as not to disrupt the flow of the main text, we collect certain preliminary materials in the appendix; we recall certain basic results regarding measures, $d$-sets, and iterated function systems in \S\ref{sec:meas}, and definitions and properties of function spaces in \S\ref{sec:fs}.

We discuss the implementation of our Galerkin method, using the quadrature rules of \cite{Gibbsetal23,Gibbsetal24,Jolyetal24} and extensions of these quadrature rules, only briefly, in Remark \ref{rem:ni}. Details and numerical experiments are left to a future paper. We refer the reader to \cite{RP25} for a wider review, that builds in part on the results of this paper, of measure-free and boundary measure-based trace operator approaches  to the formulation of elliptic PDEs on domains with fractal boundaries. (These are approaches using the trace operators $\Trc$ and $\widetilde \Trc$, respectively, introduced in \S\ref{sec:trace} and \S\ref{sec:trL2} below.)   

{\bf Notations.} Throughout, $\N$ and $\N_0$ denote the sets of positive and non-negative integers, respectively. For $G\subset \R^n$, $G^c:= \R^n\setminus G$ denotes the complement of $G$, $|G|$ the ($n$-dimensional) Lebesgue measure of $G$, and $\dimH(G)$ the Hausdorff dimension of $G$ (see \S\ref{sec:meas}). For $r>0$ and $x\in \R^n$, $B_r(x)$ denotes the closed ball $B_r(x):= \{y\in \R^n:|x-y|\leq r\}$.

\section{The operator equation} \label{sec:IE}

In this section we recall in more detail the formulation \eqref{eq:iemain} of \cite[\S3(a)]{caetano2023integral} and its recent refinements in \cite{SiavashSimon2}, using the function space notations that we recall in \S\ref{sec:fs}. As we have already noted, the solution to \eqref{eq:iemain}, while supported on $\Gamma$, is a distribution in $H^{-1}(\R^n)$. In \S\ref{sec:trace} we make a new formulation of this operator equation in terms of a trace space of functions whose domain is $\Gamma$, and the dual of this trace space. We also interpret,  in \S\ref{sec:normal}, the solution of this new formulation, in the case when $\Omega$ is an extension domain,  as a generalised normal derivative of the solution to the scattering problem.

As we recalled in \S\ref{sec:intro}, given some wavenumber $k>0$, some compact obstacle $O\subset \R^n$  such that $\Omega=O^c$ is connected, and some incident field $u^{\mathrm{inc}}\in \Hol(\R^n)$ that satisfies \eqref{eq:he} in a neighbourhood of $O$, the approach of Caetano et al.\ \cite{caetano2023integral} to the exterior sound-soft scattering problem in $\Omega$ is to choose a compact $\Gamma\subset \R^n$ satisfying \eqref{eq:GamRest} and look for a solution in the form $u|_\Omega$, where $u=\cA \phi$ for some $\phi\in H^{-1}_\Gamma$. Here $\cA\phi$ is the acoustic Newtonian potential, defined by  \eqref{eq:Newt} for $\phi\in L^1_{\mathrm{comp}}(\R^n)\supset \Lcomp(\R^n)$,  where $\Phi(\cdot,\cdot)$ is the fundamental solution of the Helmholtz equation, given by (e.g., \cite[Eqn.~(9.14)]{mclean2000strongly})
\begin{equation} \label{eq:Phidef}
\Phi(x,y) := \frac{\ri}{4}\left(\frac{k}{2\pi |x-y|}\right)^{(n-2)/2}H_{(n-2)/2}^{(1)}(k|x-y|), \qquad x,y\in \R^n, \;\; x\neq y,
\end{equation} 
where $H_m^{(1)}$ denotes the Hankel function of the first kind of order $m$. We note that, for $\phi\in L^1_{\mathrm{comp}}(\R^n)$, the integral \eqref{eq:Newt} exists for a.e.\ $x\in \R^n$ by Young's inequality, e.g., \cite[Thm.~3.1]{mclean2000strongly}, indeed, the integral exists for all $x\not\in \supp(\phi)$, and that, by \cite[Eqn.~10.16.2]{NIST}, \eqref{eq:Phidef} reduces to the familiar \eqref{eq:Phidef3} in the case $n=3$.

 It is standard (e.g., \cite[Lem.~3.24]{mclean2000strongly}) that $\Lcomp(\R^n)$ is dense in $H^{s}_{\mathrm{comp}}(\R^n)$ for every $s<0$ and (e.g., \cite[Thm.~3.1.2]{SaSc:11}, \cite[Thm.~6.1]{mclean2000strongly}) that, for every $s\in \R$, $\cA$ extends to a continuous map  
\begin{equation} \label{eq:mapping}
\cA:H^{s}_{\mathrm{comp}}(\R^n) \to H^{s+2,\mathrm{loc}}(\R^n).
\end{equation} 
Further, for every $\phi\in H^{-1}_\Gamma$, since $\Gamma \subset O$, $\cA\phi\in C^\infty(\Gamma^c)$ satisfies \eqref{eq:he} (e.g., \cite[Eqns.~(6.2), (9.14)]{mclean2000strongly}); indeed, for every $s\in \R$,
\begin{equation} \label{eq:inv}
(\Delta  + k^2)\cA\phi = \cA(\Delta  + k^2)\phi = -\phi, \qquad \phi\in H^s_{\mathrm{comp}}(\R^n).
\end{equation}
Moreover, $\cA\phi$ satisfies the Sommerfeld radiation condition,
\begin{equation} \label{eq:src}
\frac{\partial u(x)}{\partial r} - \ri k u(x) = o(r^{-(n-1)/2}) \quad \mbox{as} \quad r:= |x|\to \infty,
\end{equation}
uniformly in $x/r$ (e.g.,  \cite[Lem.~7.14, Thm.~9.6]{mclean2000strongly}).

Explicitly, for $\phi\in H_\Gamma^{-1}$ and $\chi\in C^\infty_{0,\Gamma}$,
\begin{equation} \label{eq:Aphi2}
\cA\phi(x) = \langle \phi, \overline{\chi \Phi(x,\cdot)}\rangle, \qquad x\not\in \supp(\chi),
\end{equation}
 where $\langle\cdot,\cdot\rangle$ is the duality pairing on $H^{-1}(\R^n)\times H^1(\R^n)$ given by \eqref{eq:dual} and, for every compact $F\subset \R^n$,
\begin{equation} \label{eq:CG}
C^\infty_{0,F} := \{\chi\in C_0^\infty(\R^n): \chi=1 \mbox{ in a neighbourhood of }F\}.
\end{equation}
For certainly \eqref{eq:Aphi2} holds by \eqref{eq:Newt} if $\phi\in \Lcomp(\R^n)$ and $\supp(\phi)\subset \supp(\chi)$.  If $\phi\in H_\Gamma^{-1}$ then there exists a sequence $(\phi_j)\subset \Lcomp(\R^n)$ with $\phi_j\to \phi$ in $H^{-1}(\R^n)$ and $\supp(\phi_j)\subset \supp(\chi)$, for each $j$, so that, by continuity, \eqref{eq:Aphi2} holds also for $\phi\in H_\Gamma^{-1}$.

It follows from these observations that, if $u:=\cA \phi$ with $\phi\in H^{-1}_\Gamma$, then $u\in \Hol(\R^n)$, so that $u^{\mathrm{tot}}:= u+u^{\mathrm{inc}}\in \Hol(\R^n)$, and that $u|_\Omega\in \Hol(\Omega)$ satisfies the exterior sound-soft scattering problem if and only if $u^{\mathrm{tot}}|_\Omega\in \Hol_0(\Omega)$.  Further, $u^{\mathrm{tot}}|_\Omega\in \Hol_0(\Omega)$ if  $u^{\mathrm{tot}}|_{\Gamma^c}\in \Hol_0(\Gamma^c)$, since $\Omega\subset \Gamma^c$ and $\partial \Omega = \partial O\subset \Gamma$.  
Thus (see \cite[\S4]{SiavashSimon2} for details)
$u^{\mathrm{tot}}|_{\Gamma^c}\in \Hol_0(\Gamma^c)$ if and only if $(\chi u^{\mathrm{tot}})|_{\Gamma^c}\in H_0^1(\Gamma^c)$, for some  $\chi\in C^\infty_{0,\Gamma}$.
Thus, fixing  some $\chi\in C^\infty_{0,\Gamma}$, it holds that $u^{\mathrm{tot}}|_\Omega\in \Hol_0(\Omega)$  if $\chi u^{\mathrm{tot}}\in \widetilde H^1(\Gamma^c)$, which is equivalent to $P(\chi u^{\mathrm{tot}}) = 0$, where
\begin{equation} \label{eq:Porth}
P:H^1(\R^n)\to V_1(\Gamma^c) := (\widetilde H^{1}(\Gamma^c))^\perp = (H^{-1}_\Gamma)^*
\end{equation}
is orthogonal projection. The identification of $V_1(\Gamma^c)$ with the dual space $(H^{-1}_\Gamma)^*$ is via the duality pairing $\langle\cdot,\cdot\rangle$ on $H^{1}(\R^n)\times H^{-1}(\R^n)$, restricted to $V_1(\Gamma^c)\times H^{-1}_\Gamma$; see \S\ref{sec:subspaces}.

We have shown all but the last sentence of the following result (cf.~\cite[Thm.~3.4, Rem.~3.8]{caetano2023integral}); the last sentence is shown as \cite[Prop.~4.2]{SiavashSimon2}. Note that the definition \eqref{eq:Akdef} of $A$ in this result  is independent of the choice of $\chi$, since $(\chi_1-\chi_2)\Psi \in \widetilde H^1(\Gamma^c)$, so that $P((\chi_1-\chi_2)\Psi)=0$, for all $\Psi\in \Hol(\R^n)$ and $\chi_1,\chi_2 \in C^\infty_{0,\Gamma}$. Thus the following proposition and all subsequent results hold for every  choice of $\chi \in C^\infty_{0,\Gamma}$. We remark that it is possible to choose $\chi\in C^\infty_{0,\Gamma}$ so that $u^{\mathrm{inc}}$ satisfies \eqref{eq:he} in a neighbourhood of $\mathrm{supp}(\chi)$, so that $\chi u^{\mathrm{inc}}\in C^\infty_0(\R^n)$.

\begin{proposition} \label{prop:ie1} 
Suppose that the compact set $\Gamma\subset \R^n$ satisfies \eqref{eq:GamRest} and that $\phi\in H^{-1}_\Gamma$. Then $u|_\Omega$, where $u:=\cA \phi$, satisfies the exterior sound-soft scattering problem if 
\begin{equation} \label{eq:iemain2}
A\phi = g := -P(\chi u^{\mathrm{inc}}),
\end{equation}
where $A:H^{-1}_{\Gamma}\to V_1(\Gamma^c)=(H^{-1}_\Gamma)^*$ is defined by
\begin{equation} \label{eq:Akdef}
A\psi:=P(\chi \mathcal{A}\psi), \qquad \psi \in H^{-1}_\Gamma.
\end{equation}
Conversely, if $u\in \Hol(\Omega)$ satisfies the exterior sound-soft scattering problem and the definition of $u$ is extended to $\R^n$ by setting $u:=-u^{\mathrm{inc}}$ on $O=\R^n\setminus \Omega$, then $u\in \Hol(\R^n)$ and $u=\cA\phi$, where $\phi:= -(\Delta+k^2)u\in H^{-1}_{\partial O}\subset H^{-1}_\Gamma$ satisfies \eqref{eq:iemain2}.
\end{proposition}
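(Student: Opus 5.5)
The first sentence has essentially been established in the discussion preceding the statement, so I would only assemble it. For $\phi\in H^{-1}_\Gamma$, \eqref{eq:mapping} gives $u=\cA\phi\in \Hol(\R^n)$, and \eqref{eq:inv} shows that $u$ satisfies \eqref{eq:he} in $\Gamma^c\supset\Omega$. The radiation condition \eqref{eq:src} holds by the cited results. If $A\phi=g$, then by linearity of $P$ we have $P(\chi u^{\mathrm{tot}})=0$, where $u^{\mathrm{tot}}=u+u^{\mathrm{inc}}$. Hence $\chi u^{\mathrm{tot}}\in \widetilde H^1(\Gamma^c)$, since $V_1(\Gamma^c)$ is the orthogonal complement of this closed subspace. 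Because $\chi=1$ near $\Gamma$, this gives $u^{\mathrm{tot}}|_{\Gamma^c}\in \Hol_0(\Gamma^c)$. Finally, $\Omega\subset\Gamma^c$ and $\partial\Omega\subset\Gamma$, so $u^{\mathrm{tot}}|_\Omega\in\Hol_0(\Omega)$, which is the boundary condition.

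For the converse, I would extend $u$ by $-u^{\mathrm{inc}}$ on $O$ and first show that the extended function lies in $\Hol(\R^n)$. Since $u^{\mathrm{tot}}|_\Omega\in\Hol_0(\Omega)$, its extension by zero, call it $w$, lies in $\Hol(\R^n)$. The extended $u$ equals $w-u^{\mathrm{inc}}$ on $\R^n$, and $u^{\mathrm{inc}}\in\Hol(\R^n)$, so $u\in\Hol(\R^n)$. Then $\phi:=-(\Delta+k^2)u\in H^{-1,\mathrm{loc}}(\R^n)$.

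Next I would locate the support of $\phi$. In $\Omega$ we have $\phi=0$ by \eqref{eq:he}. In $\intt(O)$ we have $u=-u^{\mathrm{inc}}$, and $u^{\mathrm{inc}}$ solves \eqref{eq:he} in a neighbourhood of $O$, so again $\phi=0$. Hence $\supp\phi\subset\partial O$, which is compact, so $\phi\in H^{-1}_{\partial O}\subset H^{-1}_\Gamma$.

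The main obstacle is showing $u=\cA\phi$. I would set $v:=u-\cA\phi$. By \eqref{eq:inv}, $(\Delta+k^2)v=-\phi+\phi=0$ on all of $\R^n$ in the distributional sense. Elliptic regularity then makes $v$ a smooth entire solution of the Helmholtz equation. Outside a large ball $v$ is a difference of two radiating solutions, so $v$ satisfies \eqref{eq:src}. Rellich's lemma, or equivalently Green's representation for entire radiating solutions (e.g.\ \cite{mclean2000strongly}), then forces $v\equiv0$. Thus $u=\cA\phi$ on $\R^n$.

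Finally I would verify \eqref{eq:iemain2}. On $\Gamma^c$ we have $u^{\mathrm{tot}}=w$, so $\chi u^{\mathrm{tot}}$ is $\chi w$ there. Since $w|_{\Gamma^c}\in\Hol_0(\Gamma^c)$ (as $\Hol_0(\Omega)$ extended by zero sits in $\Hol_0(\Gamma^c)$), we get $\chi u^{\mathrm{tot}}\in\widetilde H^1(\Gamma^c)$. One subtlety is that $u^{\mathrm{tot}}$ vanishes on $\intt(O)\cap\Gamma$ as well as off it, so $\chi u^{\mathrm{tot}}$ is zero a.e.\ on $\Gamma$, which is needed for the identification with $\widetilde H^1(\Gamma^c)$ via $H^1_0$ of $\Gamma^c$. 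Then $P(\chi u^{\mathrm{tot}})=0$, which gives $A\phi=P(\chi\cA\phi)=-P(\chi u^{\mathrm{inc}})=g$.
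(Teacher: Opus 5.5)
Your proof is correct. The forward direction is the paper's own argument: it is the discussion immediately before the statement. The one step there that the paper defers to \cite[\S4]{SiavashSimon2} is passing from $\chi u^{\mathrm{tot}}\in\widetilde H^1(\Gamma^c)$ to $u^{\mathrm{tot}}|_{\Gamma^c}\in\Hol_0(\Gamma^c)$. It follows as you indicate: split $\chi' u^{\mathrm{tot}}=\chi'\chi u^{\mathrm{tot}}+\chi'(1-\chi)u^{\mathrm{tot}}$, and note that the second term has compact support in $\Gamma^c$.

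For the converse the paper gives no argument of its own and simply cites \cite[Prop.~4.2]{SiavashSimon2}, so your self-contained proof is a genuine addition. Its three key points are sound:
\begin{itemize}
\item the zero extension $w$ of $u^{\mathrm{tot}}|_\Omega\in\Hol_0(\Omega)$ lies in $\Hol(\R^n)$, which is exactly what puts the extension of $u$ by $-u^{\mathrm{inc}}$ in $\Hol(\R^n)$;
\item $\supp\phi\subset\partial O$, because $u^{\mathrm{inc}}$ solves \eqref{eq:he} near $O$;
\item $u-\cA\phi$ is an entire radiating solution of the Helmholtz equation, and hence vanishes.
\end{itemize}
These use only \eqref{eq:inv}, the radiation property of $\cA\phi$, and the standard uniqueness theorem for entire radiating solutions.

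Your final step can be shortened. Since $u^{\mathrm{tot}}=w$ on all of $\R^n$, not only on $\Gamma^c$, $\chi u^{\mathrm{tot}}=\chi w$ is the zero extension of $(\chi u^{\mathrm{tot}})|_\Omega\in H_0^1(\Omega)$. Hence $\chi u^{\mathrm{tot}}\in\widetilde H^1(\Omega)\subset\widetilde H^1(\Gamma^c)$ directly. The detour through $\Hol_0(\Gamma^c)$, and the check that $\chi u^{\mathrm{tot}}$ vanishes a.e.\ on $\Gamma$, is correct but unnecessary.
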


Since the exterior sound-soft scattering problem has a unique solution for all $k>0$, the above proposition has the following corollary (\cite[Cor.~4.3]{SiavashSimon2} and cf.~\cite[Prop.~3.6]{caetano2023integral}). 

\begin{corollary} \label{cor:exists}
Suppose that the compact set $\Gamma\subset \R^n$ satisfies \eqref{eq:GamRest}. Then \eqref{eq:iemain2} has a solution $\phi\in H^{-1}_{\partial O} \subset H^{-1}_\Gamma$, for all $k>0$. 
\end{corollary}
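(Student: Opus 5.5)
The plan is to combine the well-posedness of the exterior sound-soft scattering problem, recalled in \S\ref{sec:intro}, with the converse half of Proposition \ref{prop:ie1}. The corollary needs nothing about $A$ beyond that proposition; all the analytic work is already contained in it and in the standard existence theory.

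First, fix $k>0$. By the well-posedness result (for instance via the weak formulation in $\Omega_R$, \cite[Eqn.~(1.4)]{CWMonk2008}), there exists $u\in \Hol(\Omega)$ with the following properties:
\begin{itemize}
\item[(a)] $u$ satisfies \eqref{eq:he} in $\Omega$;
\item[(b)] $u$ satisfies the radiation condition \eqref{eq:src};
\item[(c)] $u^{\mathrm{tot}} = u+u^{\mathrm{inc}}\in \Hol_0(\Omega)$.
\end{itemize}

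Second, extend $u$ to $\R^n$ by setting $u:=-u^{\mathrm{inc}}$ on $O$ and invoke the converse part of Proposition \ref{prop:ie1}. It gives $u\in \Hol(\R^n)$ and $u=\cA\phi$, where
$$\phi := -(\Delta+k^2)u\in H^{-1}_{\partial O}.$$
Moreover, this $\phi$ satisfies \eqref{eq:iemain2} with $A$ defined via any $\chi\in C^\infty_{0,\Gamma}$.

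Finally, because \eqref{eq:GamRest} gives $\partial O\subset\Gamma$, we have the inclusion $H^{-1}_{\partial O}\subset H^{-1}_\Gamma$. Hence $\phi$ lies in the domain of $A$ and solves $A\phi=g$, which is the claim.

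The main obstacle, were one not allowed to quote Proposition \ref{prop:ie1}, would be the converse direction, namely showing that $\phi$ is supported in $\partial O$. The argument would run as follows. The extended $u$ lies in $\Hol(\R^n)$ because $u^{\mathrm{tot}}\in\Hol_0(\Omega)$ extends by zero. The distribution $(\Delta+k^2)u$ vanishes on $\Omega$ by (a), and on $\intt(O)$ because $u^{\mathrm{inc}}$ satisfies the Helmholtz equation near $O$. Then one uses uniqueness of radiating solutions, together with (b), to identify $u$ with $\cA\phi$.

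Since that proposition is available, the corollary itself is immediate once the three steps above are carried out.
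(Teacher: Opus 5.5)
Your proposal is correct and matches the paper's own argument. The paper likewise deduces the corollary from the existence of a solution to the exterior sound-soft scattering problem for every $k>0$, combined with the converse half of Proposition \ref{prop:ie1}, with $\partial O\subset\Gamma$ giving $H^{-1}_{\partial O}\subset H^{-1}_\Gamma$.
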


Invertibility of $A:H^{-1}_\Gamma\to V_1(\Gamma^c)=(H^{-1}_\Gamma)^*$ is studied in \cite{caetano2023integral,SiavashSimon2} via a variational formulation of \eqref{eq:iemain2}. Using the duality pairing $\langle\cdot,\cdot\rangle$ defined in \eqref{eq:dual} in \S\ref{sec:fs}, define the sesquilinear form $a(\cdot,\cdot)$ associated to $A$ on  $H^{-1}_\Gamma \times H^{-1}_\Gamma$ by
$$
a(\varphi,\psi) := \langle A\varphi,\psi\rangle, \qquad \varphi,\psi\in H^{-1}_\Gamma.
$$
It is standard and immediate that, given $\phi\in H_\Gamma^{-1}$ and $g\in V_1(\Gamma^c)$, $\phi$ satisfies \eqref{eq:iemain2} if and only if $\phi$ satisfies the equivalent variational formulation
\begin{equation} \label{eq:iemainV}
a(\phi,\psi) = \langle g,\psi\rangle, \qquad \psi\in H_\Gamma^{-1}.
\end{equation}

The following result, showing that $A$ is a compact perturbation of a continuous, coercive operator, is proved as \cite[Lem.~3.3]{caetano2023integral} for the cases $n=2,3$, and extended to all $n\geq 2$ in \cite[Prop.~4.4]{SiavashSimon2}. (For detail of our terminology, of compactness and coercivity of operators and associated sesquilinear forms, see, e.g., 
\cite[\S2.2]{CWHeMoBe:21}.) 
\begin{proposition} \label{prop:coer}
Suppose that the compact set $\Gamma\subset \R^n$ satisfies \eqref{eq:GamRest}. 
Then, for each $k>0$, the sesquilinear form $a(\cdot, \cdot)$ is continuous and compactly perturbed coercive on  $H^{-1}_\Gamma \times H^{-1}_\Gamma$. Indeed, for some constants $C_{ \mathrm{cont}}, \alpha > 0$, and some compact sesquilinear form $\tilde a(\cdot, \cdot)$ on $H_\Gamma^{-1}\times H_\Gamma^{-1}$,
\begin{equation} \label{eq:form}
|a(\varphi,\psi)| \leq C_{\mathrm{cont}} \|\varphi\|_{H^{-1}(\R^n)}\, \|\psi\|_{H^{-1}(\R^n)}, \quad \rea(a(\psi,\psi)-\tilde a(\psi,\psi)) \geq \alpha \|\psi\|^2_{H^{-1}(\R^n)}, \qquad \varphi,\psi\in H^{-1}_\Gamma.
\end{equation}
\end{proposition}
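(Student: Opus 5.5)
Since $\chi\in C^\infty_{0,\Gamma}$ and $\psi$ is supported in $\Gamma$, I will compute $a(\varphi,\psi)=\langle A\varphi,\psi\rangle$ directly on the Fourier side. First I remove the projection. For $\psi\in H^{-1}_\Gamma$ and $w\in H^1(\R^n)$ we have $\langle Pw,\psi\rangle=\langle w,\psi\rangle$, because $w-Pw\in\widetilde H^1(\Gamma^c)$ annihilates $H^{-1}_\Gamma$. Moreover $\chi$ equals $1$ near $\supp\psi$, so $\langle \chi\cA\varphi,\psi\rangle=\langle \cA\varphi,\psi\rangle$, interpreting $\cA\varphi\in H^{1,\mathrm{loc}}$ by \eqref{eq:mapping}. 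Hence
$$
a(\varphi,\psi)=\langle \chi\cA\varphi,\psi\rangle .
$$
Continuity then follows at once: $\varphi\mapsto\chi\cA\varphi$ is bounded $H^{-1}_{\mathrm{comp}}\to H^1(\R^n)$ on distributions supported in a fixed compact set. This gives the first inequality in \eqref{eq:form}.

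For the coercivity I split the kernel. Write $\Phi=\Phi_0+(\Phi-\Phi_0)$, where $\Phi_0$ is the fundamental solution of $-\Delta+1$, the modified Helmholtz (Yukawa) kernel, whose Fourier symbol is $(1+|\xi|^2)^{-1}$. Let $\cA_0$ be the corresponding potential and define
$$
\tilde a(\varphi,\psi):=\langle \chi(\cA-\cA_0)\varphi,\psi\rangle .
$$
The principal part is then
$$
a_0(\psi,\psi)=\langle \cA_0\psi,\psi\rangle=\int_{\R^n}(1+|\xi|^2)^{-1}|\hat\psi(\xi)|^2\,\rd\xi=\|\psi\|_{H^{-1}(\R^n)}^2 ,
$$
up to the normalisation conventions of \eqref{eq:dual}. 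Here I use that the $H^{-1}$ norm has weight $(1+|\xi|^2)^{-1}$, and that $\cA_0$ is globally defined on $H^{-1}(\R^n)$, so no cutoff is needed. So $\rea(a-\tilde a)(\psi,\psi)\ge\alpha\|\psi\|^2$ with $\alpha=1$, or a constant depending on conventions.

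It remains to show that $\tilde a$ is compact, meaning that the operator $\psi\mapsto P(\chi(\cA-\cA_0)\psi)$ is compact $H^{-1}_\Gamma\to V_1(\Gamma^c)$. This is the main obstacle, and I handle it by a smoothing argument. The symbol difference is
$$
\frac{1}{|\xi|^2-k^2}-\frac{1}{|\xi|^2+1}=\frac{1+k^2}{(|\xi|^2-k^2)(|\xi|^2+1)} ,
$$
which decays like $|\xi|^{-4}$. Consequently $\cA-\cA_0$ maps $H^{-1}_{\mathrm{comp}}$ to $H^{3,\mathrm{loc}}$. To see this rigorously, note that $(\Delta+k^2)(\cA-\cA_0)\psi=-\psi-(\Delta+k^2)\cA_0\psi=-(1+k^2)\cA_0\psi\in H^1_{\mathrm{loc}}$, so $(\cA-\cA_0)\psi=(1+k^2)\cA\cA_0\psi$ locally. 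Combining \eqref{eq:mapping} with $s=1$ and a cutoff applied to $\cA_0\psi$ gives this regularity; the far part contributes a smooth function near $\supp\chi$. Multiplying by $\chi$ gives a bounded map into $H^3$ with support in the fixed compact set $\supp\chi$. By Rellich's theorem the embedding into $H^1$ is compact, and composing with the bounded projection $P$ preserves compactness. Hence $\tilde a$ is a compact form. The care needed is in the localisation, since $\cA_0\psi$ is not compactly supported; splitting with a second cutoff $\tilde\chi$ equal to $1$ on $\supp\chi$ resolves this, as the tail term is $C^\infty$ near $\supp\chi$ with bounds controlled by $\|\psi\|_{H^{-1}}$.
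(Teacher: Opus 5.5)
Your argument is correct. The paper gives no proof of this statement in the text; it defers to \cite[Lem.~3.3]{caetano2023integral} and \cite[Prop.~4.4]{SiavashSimon2}, and your route is the standard one used there, valid for all $n\geq 2$. You drop $P$ and the cutoff because $H^{-1}_\Gamma$ annihilates $\widetilde H^1(\Gamma^c)$. You then split off the $k=\ri$ potential $\cA_0=(-\Delta+1)^{-1}$, whose form is exactly $\|\psi\|^2_{H^{-1}(\R^n)}$, so $\alpha=1$ with the conventions of \eqref{eq:dual}. Finally, the resolvent identity shows that $\psi\mapsto\chi(\cA-\cA_0)\psi$ is bounded from $H^{-1}_\Gamma$ into $H^3$ with support in $\supp(\chi)$, and Rellich makes it compact into $H^1(\R^n)$. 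The only delicate point is the one you flag: $(\cA-\cA_0)\psi=(1+k^2)\cA\cA_0\psi$ should be used only in the localised form with $\tilde\chi\cA_0\psi$. The remainder then solves the homogeneous Helmholtz equation near $\supp(\chi)$, so interior elliptic regularity bounds it in every $H^s$ norm by $\|\psi\|_{H^{-1}(\R^n)}$, which is exactly what your second cutoff provides.
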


Let
\begin{equation} \label{eq:Omegam}
\Omega_-:= O\setminus \Gamma,
\end{equation} 
and, in the case $\Omega_-\neq \emptyset$, let $\SigO$ denote the spectrum of $-\Delta_D(\Omega_-)$, the negative Dirichlet Laplacian on $L^2(\Omega_-)$, 
otherwise let $\SigO := \emptyset$. Recall (e.g., \cite{levitin2023topics}) that  $\SigO$ is a countable subset of $(0,\infty)$,  whose only accumulation point is $+\infty$, that consists only of eigenvalues, so that $k^2\in \SigO$ if and only if there exists a non-zero $v\in H_0^1(\Omega_-)$ that satisfies \eqref{eq:he} in $\Omega_-$. The following theorem, one part of \cite[Thm.~3.4]{caetano2023integral} (and see \cite[Thm.~4.5]{SiavashSimon2}), is, in part, a corollary of the above proposition.

\begin{theorem} \label{thm:invert}
Suppose that the compact set $\Gamma\subset \R^n$ satisfies \eqref{eq:GamRest}. Then $A:H^{-1}_\Gamma\to  V_1(\Gamma^c)=(H^{-1}_\Gamma)^*$ is Fredholm of index zero. Further, $A$ is injective, and so invertible, if and only if $k^2\not\in \SigO$. 
\end{theorem}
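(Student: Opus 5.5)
Fredholmness follows directly from Proposition \ref{prop:coer}. Write $a = (a-\tilde a) + \tilde a$. By Lax--Milgram, the operator associated with the continuous coercive form $a-\tilde a$ is an isomorphism $H^{-1}_\Gamma\to (H^{-1}_\Gamma)^*=V_1(\Gamma^c)$. The operator associated with $\tilde a$ is compact. So $A$ is a compact perturbation of an invertible operator, hence Fredholm of index zero. In particular $A$ is invertible if and only if it is injective, and it remains to show that $\ker A=\{0\}$ exactly when $k^2\notin\SigO$.

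\emph{Suppose $k^2\in\SigO$.} Take a nonzero $v\in H_0^1(\Omega_-)$ with $\Delta v+k^2v=0$ in $\Omega_-$. Extend it by zero to $\tilde v\in H^1(\R^n)$, which has compact support in $O$, and set $\phi:=-(\Delta+k^2)\tilde v\in H^{-1}_{\mathrm{comp}}(\R^n)$.
\begin{itemize}
\item For $w\in C_0^\infty(\Gamma^c)$, split $w$ into pieces supported in $\Omega_-$ and in $\Omega$; the first pairs to zero by the PDE, the second because $\tilde v=0$ there. Hence $\supp\phi\subset\Gamma$, i.e.\ $\phi\in H^{-1}_\Gamma$.
\item By \eqref{eq:inv} and uniqueness of compactly supported solutions of the Helmholtz equation (the difference $\cA\phi-\tilde v$ is an entire radiating solution, hence zero), $\cA\phi=\tilde v$.
\item Then $\chi\cA\phi=\tilde v\in\widetilde H^1(\Gamma^c)$, since $v\in H_0^1(\Omega_-)$ and $\Omega_-\subset\Gamma^c$. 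So $A\phi=P\tilde v=0$.
\item $\phi\neq0$, since otherwise $\tilde v=\cA\phi=0$.
\end{itemize}
Thus $A$ is not injective.

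\emph{Suppose $A\phi=0$ with $\phi\in H^{-1}_\Gamma$, and set $u:=\cA\phi\in\Hol(\R^n)$.} Then $\chi u\in\widetilde H^1(\Gamma^c)$, so $u$ vanishes on $\Gamma$ in the $H^1$ sense, and $u$ satisfies \eqref{eq:he} on $\Gamma^c$. Treat the two components of $\Gamma^c$ separately.
\begin{itemize}
\item On $\Omega$: $u|_\Omega$ solves the exterior sound-soft problem with zero incident field, so $u|_\Omega=0$ by uniqueness.
\item On $\Omega_-$: we need $u|_{\Omega_-}\in H^1_0(\Omega_-)$. This follows because $\chi u\in\widetilde H^1(\Gamma^c)$ can be approximated by $C_0^\infty(\Gamma^c)$ functions, and multiplying by smooth cutoffs separating $\Omega$ from $\Omega_-$ localises the approximation to each open set (as in \cite[\S4]{SiavashSimon2}). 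So $u|_{\Omega_-}$ is a Dirichlet eigenfunction, and it vanishes if $k^2\notin\SigO$.
\end{itemize}

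It remains to conclude $\phi=0$ from $u=0$ a.e.\ off $\Gamma$. If $|\Gamma|=0$, then $u=0$ in $H^1(\R^n)$ and $\phi=-(\Delta+k^2)u=0$. This is the main obstacle, because $\Gamma$ may have positive Lebesgue measure (e.g.\ $\Gamma=O$ with nonempty interior). I would instead use $\chi u\in\widetilde H^1(\Gamma^c)$ directly. Take $w_j\in C_0^\infty(\Gamma^c)$ with $w_j\to\chi u$ in $H^1$. Each $w_j$ vanishes on $\Gamma$, and $\chi u=0$ a.e.\ on $\Gamma^c$; since $w_j$ is supported in $\Gamma^c$, this gives $\|w_j\|^2=\langle w_j,w_j-\chi u\rangle_{H^1}\to0$. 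Hence $\chi u=0$, so $u=0$ near $\Gamma$. Then $u$ vanishes identically (it is zero on $\Gamma^c$ and near $\Gamma$), and $\phi=-(\Delta+k^2)u=0$ by \eqref{eq:inv}. So $A$ is injective when $k^2\notin\SigO$.
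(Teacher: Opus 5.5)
Your proof is correct and follows the route the paper indicates. Fredholmness of index zero comes from the compactly perturbed coercivity in Proposition \ref{prop:coer}; the paper does not reprove the kernel characterisation but takes it from \cite{caetano2023integral,SiavashSimon2}, and you supply the standard argument: uniqueness of the exterior problem, Dirichlet eigenfunctions on $\Omega_-$, and $\widetilde H^1(\Gamma^c)\cap H^1_\Gamma=\{0\}$, which your $H^1$-orthogonality step proves correctly even when $|\Gamma|>0$.

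Two small points. The identity $\chi\tilde v=\tilde v$ needs $\chi=1$ on $\supp\tilde v$; take $\chi\in C^\infty_{0,O}$, which is allowed since $A$ does not depend on $\chi$, or just note $\chi\tilde v\in\widetilde H^1(\Omega_-)$. The localisation to $H^1_0(\Omega_-)$ should split each approximant as $w_j=w_j\mathbf{1}_{\Omega_-}+w_j\mathbf{1}_{\Omega}$; both pieces are smooth and $\|w_j\mathbf{1}_{\Omega_-}\|_{H^1(\R^n)}\leq\|w_j\|_{H^1(\R^n)}$, so Cauchy sequences stay Cauchy. A single smooth cutoff separating $\Omega$ from $\Omega_-$ need not exist when $\partial\Omega\cap\partial\Omega_-\neq\emptyset$, for example when $\Gamma=\partial O$ with $O$ a closed ball.
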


Of course this result implies that $A$ is invertible for all $k>0$ in the case that $\Gamma=O$. Combined with Corollary \ref{cor:exists} and Proposition \ref{prop:ie1}, Theorem \ref{thm:invert} has the following straightforward corollary.

\begin{corollary}\label{cor:IEfinal}(\cite[\S4.6]{SiavashSimon2}) 
Suppose that the compact set $\Gamma\subset \R^n$ satisfies \eqref{eq:GamRest}. Then \eqref{eq:iemain2} has a solution $\phi\in H^{-1}_{\partial O} \subset H^{-1}_\Gamma$. Indeed, this is the unique solution $\phi\in H^{-1}_\Gamma$ of \eqref{eq:iemain2} if $k^2\not\in \SigO$, while \eqref{eq:iemain2} has infinitely many solutions  $\phi\in H^{-1}_\Gamma$ if $k^2\in \SigO$. If  $\phi\in H^{-1}_\Gamma$ satisfies \eqref{eq:iemain2} and $u=\cA\phi$, then $u|_\Omega\in \Hol(\Omega)$ is the unique solution of the exterior sound-soft scattering problem.
\end{corollary}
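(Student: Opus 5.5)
The first claim, existence of a solution $\phi\in H^{-1}_{\partial O}\subset H^{-1}_\Gamma$, follows directly from Corollary \ref{cor:exists}. It is worth recalling why: since the exterior sound-soft scattering problem is well-posed for all $k>0$, we take its unique solution $u$, extend it by $-u^{\mathrm{inc}}$ on $O$, and set $\phi:=-(\Delta+k^2)u$. The converse part of Proposition \ref{prop:ie1} then shows that $\phi\in H^{-1}_{\partial O}$ and that $\phi$ satisfies \eqref{eq:iemain2}.

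Next we treat uniqueness versus non-uniqueness, using Theorem \ref{thm:invert}. If $k^2\notin\SigO$, then $A$ is injective, so the solution of \eqref{eq:iemain2} in $H^{-1}_\Gamma$ is unique; it must therefore coincide with the $\phi\in H^{-1}_{\partial O}$ just constructed. If $k^2\in\SigO$, then $A$ is not injective, so $\ker A\neq\{0\}$ is a nontrivial complex vector space. Fix any $\psi_0\in\ker A\setminus\{0\}$. The family $\phi+\lambda\psi_0$, $\lambda\in\C$, then consists of infinitely many distinct solutions in $H^{-1}_\Gamma$. We do not even need the fact that $\ker A$ is finite-dimensional (which Fredholmness gives); nontriviality is enough.

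Finally, suppose $\phi\in H^{-1}_\Gamma$ is any solution of \eqref{eq:iemain2}, and set $u=\cA\phi$. The first part of Proposition \ref{prop:ie1} gives that $u|_\Omega\in\Hol(\Omega)$ satisfies the exterior sound-soft scattering problem. Well-posedness of that problem (recalled in \S\ref{sec:intro}) then identifies $u|_\Omega$ as its unique solution. This holds even when $k^2\in\SigO$ and $\phi$ differs from the distinguished solution: in that case $\cA\psi_0$ vanishes on $\Omega$, which is consistent with uniqueness.

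The argument is essentially bookkeeping. The only point needing care is making sure the "if" direction of Proposition \ref{prop:ie1} applies to every $\phi\in H^{-1}_\Gamma$, not only to those supported on $\partial O$; this is exactly what that proposition states, so there is no real obstacle.
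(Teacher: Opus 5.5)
Your proof is correct and follows the same route as the paper, which presents this corollary as a straightforward combination of three results. These are Corollary \ref{cor:exists} (existence in $H^{-1}_{\partial O}$), Theorem \ref{thm:invert} (injectivity of $A$ if and only if $k^2\notin\SigO$, with a nontrivial kernel in the complex space $H^{-1}_\Gamma$ giving infinitely many solutions otherwise), and the ``if'' direction of Proposition \ref{prop:ie1} combined with uniqueness for the exterior sound-soft scattering problem.
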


\begin{remark}[When is the solution just $u=0$?] \label{rem:zero}
It is possible that the only solution $\phi\in H_\Gamma^{-1}$ to \eqref{eq:iemain2} is $\phi=0$, so that, by Corollary \ref{cor:IEfinal}, the unique solution of the exterior sound-soft scattering problem is $u=0$. In particular this is the case if $H_\Gamma^{-1}=\{0\}$, which holds if and only if $\mathrm{cap}(\Gamma)=0$  (see \S\ref{sec:subspaces}), where $\mathrm{cap}(\Gamma)$ is the capacity of $\Gamma$, defined in \eqref{eq:capdef}.  Further,  given \eqref{eq:GamRest},  so that $\Gamma \supset \partial O$, $\mathrm{cap}(\Gamma)=0$ if and only if $\mathrm{cap}(O)=0$. For if $\intt(O)=\emptyset$, then $\Gamma=O$, while if $\intt(O)\neq \emptyset$, then $\mathrm{cap}(O)>0$ and also $\dim_H(\partial O) \geq n-1$, by \eqref{eq:boundH}, so that  $\mathrm{cap}(\partial O)>0$, by \eqref{eq:cap}, so that $\mathrm{cap}(\Gamma)>0$.
\end{remark}

Let us comment on the choice of $\Gamma$; see also Remark \ref{remkn3} below and \cite[Rem.~3.8]{caetano2023integral}.
\begin{remark}[The choice of $\Gamma$] \label{rem:choice}
Each choice of $\Gamma$ satisfying \eqref{eq:GamRest} corresponds to a different operator equation formulation \eqref{eq:iemain2} of the problem.  If $\partial O \subset \Gamma_1\subset \Gamma_2 \subset O$, then, for each $k_0>0$ and $j=1,2$, the equation \eqref{eq:iemain2} corresponding to $\Gamma_j$ is, by Theorem \ref{thm:invert}, invertible except for a finite number $N_j(k_0)$  values of $k\in(0,k_0]$. Further, $N_2(k_0)\leq N_1(k_0)$ by standard monotonicity properties of Dirichlet eigenvalues (e.g., \cite[Thm.~3.2.1]{levitin2023topics}), in particular $N_2(k_0)=0$ if $\Gamma_2=O$. 
Thus, for each $k_0>0$, a larger choice of $\Gamma$ can lead to a failure of invertibility for fewer $k\in (0,k_0]$.
 On the other hand, if \eqref{eq:iemain} is used for numerical solution, as in \S\ref{sec:GM} below, a larger $\Gamma$ may require more degrees of freedom for a finite-element type discretisation. 

In the classical case that $O$ is the closure of a bounded Lipschitz open set $\Omega_-$, equation \eqref{eq:iemain2} with $\Gamma=\partial O=\partial \Omega_-$ is equivalent to a standard (and popular) boundary integral equation formulation of the scattering problem; see \cite[Rem.~3.17]{Caetano24} and Remarks \ref{rem:kn1} and \ref{rem:kn2} below. In this classical case the values of $k>0$ such that $k^2\in \SigO$ are often termed {\em irregular frequencies}. In the case that $\Gamma = \partial O \cup \Gamma'$, for some compact set  $\Gamma'\subset \Omega_-$, the formulation  \eqref{eq:iemain2} is reminiscent of the CHIEF method and its variants (e.g., \cite{Schenk68,Wu91}) for removing irregular frequencies of BIEs.
\end{remark}

\subsection{The operator equation reformulated in trace spaces} \label{sec:trace}

In this section we write down, in Theorem \ref{thm:trace}, a reformulation of the operator equation \eqref{eq:iemain} in terms of a trace space on $\Gamma$ and its dual. We first introduce the trace space $\cB(\Gamma)$ that we use and its dual space $\cB^*(\Gamma)$.

Suppose that $F\subset \R^n$ is closed. Following, e.g., \cite{claret2024layer,RP25}, let us denote  by $\cB(F)$ the vector space of all q.e.~equivalence classes of pointwise restrictions $\tilde u|_{F}$ of quasi-continuous representatives $\tilde u$ of classes $u\in H^1(\R^n)$ (these standard notions are recalled in \S\ref{sec:cap}). Following, e.g.,  \cite[Defn.~6.1.6]{adams2012function}, let us define a trace operator $\Trc_F:H^1(\R^n)\to \cB(F)$ by the rule that, for $u\in H^1(\R^n)$, $\Trc_Fu$ denotes the equivalence class in $\cB(F)$ that contains the pointwise restriction of every quasi-continuous representative $\tilde u$ of $u$. 
 Importantly, Adams and Hedberg \cite[Cor.~9.1.4]{adams2012function} characterise the kernel of $\Trc_F$, showing that
\begin{equation} \label{eq:ker}
\ker \Trc_F = \widetilde H^1(F^c),
\end{equation}
as defined in \eqref{eq:tildeHs}. It is convenient, in the case that $F$ is compact, to extend the domain of $\Trc_F$ from $H^1(\R^n)$ to $\Hol(\R^n)$, by setting
\begin{equation} \label{eq:extend}
\Trc_F u := \Trc_F(\chi_F u), \qquad u\in \Hol(\R^n),
\end{equation}
where $\chi_F\in C^\infty_{0,F}$ and $C^\infty_{0,F}$ is as defined in \eqref{eq:CG}. (It follows from \eqref{eq:ker} that this definition is independent of the particular $\chi_F$ that we choose.)

As discussed in \cite{claret2024layer}, equipped with the norm
\begin{equation} \label{eq:Bnorm}
\|f\|_{\cB(F)} := \min \{ \|v\|_{H^1(\R^n)} : v\in H^1(\R^n) \mbox{ and } \Trc_F v=f\},
\end{equation}
the space $\cB(F)$ is a Hilbert space, and it is clear that  $\Trc_F:H^1(\R^n)\to \cB(F)$ is bounded with $\|\Trc_F\| = 1$.
Generalising our notation in \eqref{eq:Porth}, let 
\begin{equation} \label{eq:V1def}
V_1(F^c) := (\widetilde H^1(F^c))^\perp, 
\end{equation}
so that
\begin{equation} \label{eq:perp}
H^1(\R^n) = \widetilde H^1(F^c) \oplus V_1(F^c).
\end{equation}
Then, as observed in  \cite{claret2024layer},
$$
V_1(F^c) = \{u\in H^1(\R^n):\Delta u = u \mbox{ in }F^c\}
$$
is the space of $1$-harmonic functions on $F^c$.
Further, where $\Trcr_F := \Trc_F|_{V_1(F^c)}$, i.e., $\Trcr_F$ is the restriction of $\Trc$ to $V_1(F^c)$, $\Trcr_F:V_1(F^c)\to \cB(F)$ is a unitary isomorphism. Define $E_F: \cB(F)\to H^1(\R^n)$ by $E_Ff := \Trcr_F^{-1}f$, $f\in \cB(F)$, so that $E_F$ is an isometry with range $V_1(F^c)$ and is a right inverse of $\Trc_F$. As a consequence of \eqref{eq:ker} we 
note that, for $f\in \cB(F)$, $u:= E_Ff$ is the unique solution of the Dirichlet boundary value problem:
\begin{equation}\label{Eq:Dir-1H-Prob}
\mbox{Find } u\in H^1(\R^n) \mbox{ such that } \Delta u = u \mbox{ in } F^c \mbox{ and } \Trc_Fu = f.
\end{equation}
Let $\cB^*(F)$ denote the dual space of $\cB(F)$ and $\Trcr_F^*:\cB^*(F)\to H^{-1}_F = (V_1(F^c))^*$ the adjoint of $\Trcr_F$, so that
\begin{equation} \label{eq:adjointy}
\langle \phi,\Trcr_F u \rangle_{\cB^*(F)\times \cB(F)} = \langle  \Trcr_F^*\phi,u \rangle, \qquad  \phi\in \cB^*(F), \;u\in V_1(F^c),
\end{equation}
and note that $\Trcr_F^*$ is also a unitary isomorphism. Similarly, let $\Trc_F^*:\cB^*(F)\to H^{-1}(\R^n)$ denote the adjoint of $\Trc_F$, and note that $\Trc_F^*\phi=\Trcr_F^*\phi$, $\phi\in \cB^*(F)$.

We can now deduce the promised reformulation of \eqref{eq:iemain2}, which is \eqref{eq:iemainB} in the following theorem. Clearly, \eqref{eq:iemainB} is a reformulation in which the unknown is a distribution on $\Gamma$, an element of the dual space $\cB^*(\Gamma)$. In contrast, the solution of \eqref{eq:iemain2}, while supported on $\Gamma$, is an element of $H^{-1}(\R^n)$, so a distribution on $\R^n$.  Note the representation $\sA = \Trc_\Gamma \cA \Trc^*_\Gamma$ for the operator $\sA$ in \eqref{eq:iemainB}, where $\Trc^*_\Gamma:\cB^*(\Gamma)\to H^{-1}(\R^n)$ is the adjoint of $\Trc$, which has range $H^{-1}_\Gamma$. This representation is analogous to a standard definition  (e.g., \cite[(7.3), p.~202]{mclean2000strongly}) of the single-layer boundary integral operator $S$ on the boundary of a Lipschitz domain.
\begin{theorem} \label{thm:trace}
Suppose that $\phi\in H_\Gamma^{-1}$  and $f:= (\Trcr^*_\Gamma)^{-1} \phi \in \cB^*(\Gamma)$. Then $\phi$ satisfies \eqref{eq:iemain2} if and only if $f$ satisfies
\begin{equation} \label{eq:iemainB}
\sA f = h:= \Trcr_\Gamma g = -\Trc_\Gamma u^{\mathrm{inc}},
\end{equation} 
where $\sA:\cB^*(\Gamma)\to \cB(\Gamma)$ is given by
\begin{equation} \label{eq:sAdef}
\sA := \Trcr_\Gamma A \Trcr^*_\Gamma = \Trc_\Gamma \cA \Trc^*_\Gamma.
\end{equation}
Further, $\sA$ is Fredholm of index zero, and is invertible if and only if $k^2\not\in \SigO$. For every $k>0$ \eqref{eq:iemainB} has a solution $f\in \cB^*({\partial \Omega})$ and, if $u:=\cA \Trcr^*_\Gamma f$, then $u|_\Omega$ is the unique solution of the exterior sound-soft scattering problem.
\end{theorem}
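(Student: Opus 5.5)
The plan is to transport everything through the unitary isomorphisms $\Trcr_\Gamma:V_1(\Gamma^c)\to\cB(\Gamma)$ and $\Trcr^*_\Gamma:\cB^*(\Gamma)\to H^{-1}_\Gamma$. Since $\Trcr_\Gamma$ is injective on $V_1(\Gamma^c)$ and both sides of \eqref{eq:iemain2} lie in $V_1(\Gamma^c)$, with $\phi=\Trcr^*_\Gamma f$, equation \eqref{eq:iemain2} holds if and only if $\Trcr_\Gamma A\Trcr^*_\Gamma f=\Trcr_\Gamma g$. This is \eqref{eq:iemainB} once the two identities in \eqref{eq:sAdef} and in the definition of $h$ are verified.

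For the identities I use \eqref{eq:ker} and \eqref{eq:perp}. For $v\in H^1(\R^n)$ we have $v-Pv\in\widetilde H^1(\Gamma^c)=\ker\Trc_\Gamma$, so $\Trcr_\Gamma Pv=\Trc_\Gamma v$. Hence
\[
\Trcr_\Gamma A\psi=\Trc_\Gamma(\chi\cA\psi)=\Trc_\Gamma\cA\psi,
\]
by the extension \eqref{eq:extend}, which is independent of the cutoff. Also $\Trcr_\Gamma g=-\Trc_\Gamma(\chi u^{\mathrm{inc}})=-\Trc_\Gamma u^{\mathrm{inc}}$. Finally, $\Trcr^*_\Gamma=\Trc^*_\Gamma$ on $\cB^*(\Gamma)$, as noted just before the theorem. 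These together give $\sA=\Trcr_\Gamma A\Trcr^*_\Gamma=\Trc_\Gamma\cA\Trc^*_\Gamma$.

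Fredholmness of index zero and the invertibility criterion then follow from Theorem \ref{thm:invert}. Indeed, $\sA$ is $A$ composed on both sides with isomorphisms, so kernel dimension and codimension of the range are preserved, and $\sA$ is invertible exactly when $A$ is, that is, iff $k^2\notin\SigO$.

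For existence, Corollary \ref{cor:exists} gives a solution $\phi\in H^{-1}_{\partial O}$ of \eqref{eq:iemain2}, and I set $f:=(\Trcr^*_\Gamma)^{-1}\phi$. The main point needing care is the claim $f\in\cB^*(\partial\Omega)$, which must be read via the natural identification of $\cB^*(\partial\Omega)$ with a subspace of $\cB^*(\Gamma)$. Since $\partial\Omega\subset\Gamma$, restriction $\cB(\Gamma)\to\cB(\partial\Omega)$, $\Trc_\Gamma v\mapsto\Trc_{\partial\Omega}v$, is well defined and bounded: $\ker\Trc_\Gamma=\widetilde H^1(\Gamma^c)\subset\widetilde H^1((\partial\Omega)^c)=\ker\Trc_{\partial\Omega}$. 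Its adjoint embeds $\cB^*(\partial\Omega)$ into $\cB^*(\Gamma)$, and $\Trc^*_\Gamma$ composed with this embedding equals $\Trc^*_{\partial\Omega}$, whose range is $H^{-1}_{\partial\Omega}=H^{-1}_{\partial O}$. Therefore $\phi=\Trc^*_{\partial\Omega}f'$ for some $f'\in\cB^*(\partial\Omega)$, and $f$ is the image of $f'$ under the embedding.

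The last assertion follows from Corollary \ref{cor:IEfinal}. Any solution $f$ of \eqref{eq:iemainB} gives a solution $\phi=\Trcr^*_\Gamma f$ of \eqref{eq:iemain2}, so $u=\cA\phi$ has $u|_\Omega$ equal to the unique scattering solution.
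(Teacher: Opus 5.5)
Your proof is correct and follows essentially the same route as the paper. You transport \eqref{eq:iemain2} through the unitary isomorphisms $\Trcr_\Gamma$ and $\Trcr^*_\Gamma$, use $\Trc_\Gamma(I-P)=0$ with \eqref{eq:extend} for the identities, and invoke Theorem~\ref{thm:invert} and Corollaries~\ref{cor:exists} and~\ref{cor:IEfinal}; your explicit treatment of $f\in\cB^*(\partial\Omega)$ via the adjoint of the restriction $\cB(\Gamma)\to\cB(\partial\Omega)$ spells out a point the paper leaves implicit, and that adjoint is exactly the embedding $j=(\Trcr^*_\Gamma)^{-1}\Trcr^*_{\partial\Omega}$ introduced later in \S\ref{sec:normal}.
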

\begin{proof}
Suppose that $\phi\in H_\Gamma^{-1}$  and $f:= (\Trcr^*_\Gamma)^{-1} \phi \in \cB^*(\Gamma)$. Then, recalling that $\Trcr^*_\Gamma:\cB^*(\Gamma)\to H_\Gamma^{-1}$ and $\Trcr_\Gamma:V_1(\Gamma^c)\to \cB(\Gamma)$ are unitary isomorphisms, $\phi$ satisfies \eqref{eq:iemain2} if and only if $f$ satisfies $\sA f=h$. That $\sA$ is Fredholm of index zero, and is invertible if and only if $k^2\not\in \SigO$, follows by Theorem \ref{thm:invert}. The final sentence follows from Corollary \ref{cor:IEfinal}. The second equality in \eqref{eq:sAdef} follows since $\Trcr^*f=\Trc^*f$, for $f\in \cB^*(\Gamma)$, and from \eqref{eq:Akdef} and \eqref{eq:extend} which imply, for $\psi\in H_\Gamma^{-1}$,  that $\Trcr_\Gamma A \psi =  \Trc_\Gamma  P(\chi \cA \psi) = \Trc_\Gamma (\chi \cA \psi)= \Trc_\Gamma \cA \psi$, since $\Trc_\Gamma  (I-P)=0$ as the range of $I-P$ is  $\widetilde H^1(\Gamma^c)=\ker \Trc_\Gamma$. The same arguments give the last equality in \eqref{eq:iemainB}.
\end{proof}

\subsection{The solution as a normal derivative} \label{sec:normal}

The aim of this subsection is to interpret the solution $f\in\Bcal(\Gamma)$ to the operator equation \eqref{eq:iemainB}.
In this subsection only, we assume 
$\Omega$ is an extension domain in the sense of~\cite{hajlasz_sobolev_2008}, i.e.\ there exists a linear and continuous extension operator $\mathrm{E}:H^1(\Omega)\to H^1(\R^n)$, so that
\begin{equation*}
(\mathrm{E}u)|_\Omega=u\quad\mbox{and}\quad\|\mathrm Eu\|_{H^1(\R^n)}\le c\|u\|_{H^1(\Omega)}, \qquad  u\in H^1(\Omega),
\end{equation*}
with $c>0$ independent of $u$.
Note that this setting is fairly general: $\Omega$ could, for example,  be an $(\varepsilon,\delta)$-domain (see~\cite[Section 2]{rozanova-pierrat_generalization_2021} and the references therein), so that $\partial\Omega$ could be fractal, indeed of spatially varying (local) Hausdorff dimension.
In particular, the normal vector to $\partial \Omega$ need not be defined \textit{a priori}, so that the normal derivative need not be defined in the strong sense.

For $u\in H^1(\Omega)$, we define its trace on ${\partial \Omega}$ (still denoted by $\mathrm{Tr}_{\partial \Omega} u$) by $\mathrm{Tr}_{\partial \Omega} u:=\mathrm{Tr}_{\partial \Omega}(\mathrm Eu)$, noting that this definition is independent of the choice of the extension operator $\mathrm E$: indeed,
\begin{equation} \label{eq:Tracesame}
\mbox{if} \quad v\in H^1(\R^n) \quad \mbox{and}  \quad u=v|_\Omega, \quad \mbox{then} \quad \Trc_{\partial \Omega} u = \Trc_{\partial \Omega} v;
\end{equation}
see \cite[Rem.~6.2]{Biegart:09} or  \cite[Thm.~4.2, Lem.~4.3(i)]{Hinz}.
This gives rise to a linear, continuous and surjective operator $\mathrm{Tr}_{\partial \Omega}:H^1(\Omega)\to\Bcal({\partial \Omega})$.
For $u\in H^1(\Omega)$ with $\Delta u\in L^2(\Omega)$, we define the (weak) normal derivative $\partial_\nu u$ as the unique $\varphi\in\Bcal^*({\partial \Omega})$ such that
\begin{equation} \label{Eq:Norm-der}
 \langle\varphi,f\rangle_{\Bcal^*({\partial \Omega})\times\Bcal({\partial \Omega})}=\int_{\Omega}(\Delta u)\overline{v_f}\,\dx+\int_{\Omega}\nabla u\cdot\nabla \overline{v_f}\,\dx, \quad f\in \Bcal({\partial \Omega}),
\end{equation}
where $v_f:=E_{\partial \Omega} f|_\Omega$; recall that $E_{\partial \Omega}:\Bcal({\partial \Omega})\to H^1(\R^n)$ is defined above \eqref{Eq:Dir-1H-Prob}.
In the case of a smooth boundary, the sign convention in~\eqref{Eq:Norm-der} corresponds to considering a normal vector field $\nu$ on $\partial \Omega$ directed out of $\Omega$, and the weak normal derivative coincides with the usual normal derivative, which we denote by $\frac{\partial u}{\partial\nu}$.
Importantly, it follows from~\eqref{eq:ker} that a weak version of Green's first identity holds (see for instance~\cite[Remark 4.2]{hinz2023boundary}):
\begin{equation}\label{Eq:Green}
\langle\partial_\nu u,\mathrm{Tr}_{\partial \Omega} v\rangle_{\Bcal^*({\partial \Omega})\times\Bcal({\partial \Omega})}=\int_{\Omega}(\Delta u)\bar v\,\dx+\int_{\Omega}\nabla u\cdot\nabla \bar v\,\dx, \quad  v\in H^1(\Omega).
\end{equation}
It is convenient to extend the definition of $\partial_\nu u$ to  the case that $u\in \Hol(\Omega)$ with $\Delta u \in \Lloc(\Omega)$, defining $\partial_\nu u:= \partial_\nu (\chi u)$, where $\chi\in C_{0,O}^\infty$, as defined in \eqref{eq:CG}. (This definition  of $\partial_\nu u$ is independent of the choice of $\chi$.)

In this setting, we can give an interpretation of the solution $f\in\Bcal^*(\Gamma)$ to equation~\eqref{eq:iemainB} in terms of the total field $u^{\mathrm{tot}}=u+u^{\mathrm{inc}}$, where $u\in \Hol(\Omega)$ is the unique solution to the scattering problem. We assume in the first instance that $\Gamma=\partial \Omega$ and turn to the case of an arbitrary compact $\Gamma$ satisfying \eqref{eq:GamRest} in Theorem \ref{cor:normal} below. Recall we assume throughout that $u^{\mathrm{inc}}$ satisfies the Helmholtz equation in some open neighbourhood of $O$.
\begin{proposition} \label{prop:normal}
Assume $\Gamma=\partial\Omega$ where $\Omega$ is an extension domain, and that $k^2\not\in \sigma(-\Delta_D(\Omega_-))$.
Then the unique solution $f\in\Bcal^*(\Gamma)$ to the operator equation~\eqref{eq:iemainB} can be expressed in terms of the total field $u^{\mathrm{tot}}$ as
\begin{equation*}
f=\partial_\nu u^{\mathrm{tot}}.
\end{equation*}
\end{proposition}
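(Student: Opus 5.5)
We identify $f$ through its action on $\Trcr_\Gamma v$, $v\in V_1(\Gamma^c)$, and compare this with the weak Green identity \eqref{Eq:Green}. Since $k^2\not\in\SigO$, Theorem \ref{thm:trace} gives that \eqref{eq:iemainB} has a unique solution. By Proposition \ref{prop:ie1} and Corollary \ref{cor:IEfinal}, this solution is $f=(\Trcr^*_\Gamma)^{-1}\phi$, where $\phi:=-(\Delta+k^2)u$ and $u$ is the scattered field extended by $u:=-u^{\mathrm{inc}}$ on $O$, so that $u\in\Hol(\R^n)$. By \eqref{eq:adjointy}, $f$ is characterised by
$$\langle f,\Trcr_\Gamma v\rangle_{\cB^*(\Gamma)\times\cB(\Gamma)}=\langle\phi,v\rangle,\qquad v\in V_1(\Gamma^c).$$
Since $\Trcr_\Gamma$ is onto $\cB(\Gamma)$, it suffices to show that $\langle\phi,v\rangle=\langle\partial_\nu u^{\mathrm{tot}},\Trc_{\partial\Omega}v\rangle$ for every such $v$.

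\textbf{Step 1: rewrite $\phi$ near $O$.} Choose $\chi\in C^\infty_{0,O}$ supported in the neighbourhood of $O$ on which $u^{\mathrm{inc}}$ satisfies \eqref{eq:he}, and choose $\chi'\in C^\infty_{0,O}$ with $\chi=1$ on $\supp\chi'$. Let $w:=\chi(u+u^{\mathrm{inc}})\in H^1(\R^n)$. Then $w=\chi u^{\mathrm{tot}}$ in $\Omega$ and $w=0$ on $O$.

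\textbf{Step 2: localise the test function.} Since $\supp\phi\subset\Gamma$ and $(1-\chi')v$ vanishes near $\Gamma$, we have $\langle\phi,v\rangle=\langle\phi,\chi'v\rangle$. On $\supp\chi'$ we have $(\Delta+k^2)u^{\mathrm{inc}}=0$ and $\chi=1$, so
$$\langle\phi,\chi' v\rangle=-\langle(\Delta+k^2)w,\chi'v\rangle=\int_{\R^n}\bigl(\nabla w\cdot\nabla\overline{\chi'v}-k^2w\,\overline{\chi'v}\bigr)\,\dx .$$

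\textbf{Step 3: reduce to an integral over $\Omega$.} Since $w=0$ a.e.\ on $O$, and $\nabla w=0$ a.e.\ on the zero set of an $H^1$ function (Stampacchia), the integral over $\R^n$ equals the integral over $\Omega$. This matters because $\Gamma=\partial\Omega$ may have positive Lebesgue measure. In $\Omega\cap\supp\chi'$ we have $\Delta w=-k^2w$. Hence the Green identity \eqref{Eq:Green}, applied to $\chi u^{\mathrm{tot}}$ (which lies in $H^1(\Omega)$ with Laplacian in $L^2(\Omega)$) and the test function $(\chi'v)|_\Omega\in H^1(\Omega)$, gives
$$\langle\phi,v\rangle=\langle\partial_\nu(\chi u^{\mathrm{tot}}),\Trc_{\partial\Omega}(\chi'v)|_\Omega\rangle .$$

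\textbf{Step 4: identify the trace.} By \eqref{eq:Tracesame}, $\Trc_{\partial\Omega}\bigl((\chi'v)|_\Omega\bigr)=\Trc_{\partial\Omega}(\chi'v)$. By \eqref{eq:ker}, this equals $\Trc_\Gamma v$, since $(1-\chi')v\in\widetilde H^1(\Gamma^c)$. By definition $\partial_\nu u^{\mathrm{tot}}=\partial_\nu(\chi u^{\mathrm{tot}})$. Therefore $\langle f,\Trcr_\Gamma v\rangle=\langle\partial_\nu u^{\mathrm{tot}},\Trcr_\Gamma v\rangle$ for all $v\in V_1(\Gamma^c)$, and hence $f=\partial_\nu u^{\mathrm{tot}}$.

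The main obstacle is Steps 1 and 3. One must check carefully that the distributional pairing $-\langle(\Delta+k^2)w,\cdot\rangle$ reduces to an integral over $\Omega$ alone, with no contribution from $O$ even when $|\Gamma|>0$. One must also keep the sign and conjugation conventions consistent, so that the result matches \eqref{Eq:Norm-der} exactly, with outward normal from $\Omega$.
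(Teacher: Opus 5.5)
Your proof is correct, but it takes a different route from the paper's.

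\textbf{The paper's route.} It re-derives a representation formula from scratch:
\begin{itemize}
\item it applies the classical Green second identity on a smooth domain $D\supset O$, using the radiation condition on $\R^n\setminus\overline D$ to remove the exterior boundary term;
\item it applies a weak Green second identity on $\Omega$ (the first identity \eqref{Eq:Green} used twice, tested against $\chi_1\overline{\Phi(x,\cdot)}$), giving $u=\cA(\Trcr^*_\Gamma\partial_\nu u^{\mathrm{tot}})$ on $D\cap\Omega$;
\item it extends this identity to all of $\Omega$ by unique continuation;
\item only then does it identify $f$, via \eqref{eq:Tracesame} and injectivity of $\sA$.
\end{itemize}

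\textbf{Your route.} You start from the density $\phi=-(\Delta+k^2)u$ supplied by the converse half of Proposition \ref{prop:ie1}. You then check $\phi=\Trcr^*_\Gamma\partial_\nu u^{\mathrm{tot}}$ directly by testing against $v$. This needs one application of \eqref{Eq:Green}, together with the observation that $\nabla w=0$ a.e.\ on $O$, which handles the case $|\partial\Omega|>0$. The fundamental solution, Green's representation theorem and unique continuation never appear. The spectral hypothesis enters only to say that this $\phi$ is \emph{the} solution.

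\textbf{What your route buys.} Your computation works for every $v\in H^1(\R^n)$ and every $k>0$. It therefore shows that $-(\Delta+k^2)u=\Trc^*_{\partial\Omega}\partial_\nu u^{\mathrm{tot}}$ always. Theorem \ref{cor:normal} for general $\Gamma$ then follows at once: whenever the solution is unique, $f=(\Trcr^*_\Gamma)^{-1}\phi=j(\partial_\nu u^{\mathrm{tot}})$. This avoids the continuity-in-$k$ limiting argument the paper needs when $k^2\in\sigma(-\Delta_D(\intt(O)))$.

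\textbf{What the paper's route buys.} It gives a self-contained, classical-style derivation of the Green representation $u=\cA(\Trcr^*_\Gamma\partial_\nu u^{\mathrm{tot}})$ in $\Omega$. It does not lean on the converse part of Proposition \ref{prop:ie1}, which the paper imports from elsewhere.

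A small point: in Step 2, take $\chi=1$ on a neighbourhood of $\supp\chi'$, not just on $\supp\chi'$. The localisation of the distributional identity is then immediate.
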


\begin{proof}
Since the incident field $u^{\mathrm{inc}}$ solves the Helmholtz equation in some open neighbourhood of $O$, there exists a domain $D$ (which we can choose smooth) such that
\begin{equation*}
O\subset D \qquad\mbox{and}\qquad \Delta u^{\mathrm{inc}}+k^2u^{\mathrm{inc}}=0\mbox{ on }D.
\end{equation*}
Let $x\in D\cap\Omega$ and let $\chi\in C^\infty_{0,O}$ be such that $x\notin\operatorname{supp}\chi\subset D$.
Let $\varepsilon>0$ be small enough so that $B_\varepsilon(x)\cap\operatorname{supp}\chi=\emptyset$.
Applying Green's classical second identity (e.g.,~\cite[Eqn.~(3.5)]{ColKre}) to $(1-\chi)u^{\mathrm{tot}}\in C^\infty(\overline{D})$ and $\Phi(x,\cdot)\in C^\infty(\R^n\setminus\{x\})$ on $D\setminus B_\varepsilon(x)$ yields
\begin{multline*}
\int_{\partial D\cup\partial B_\varepsilon(x)}\left[\frac{\partial\Phi(x,y)}{\partial\nu(y)}u^{\mathrm{tot}}(y)-\Phi(x,y)\frac{\partial u^{\mathrm{tot}}(y)}{\partial\nu}\right]\mathrm d\sigma(y)\\
=\int_{D\setminus B_\varepsilon(x)}\Phi(x,y)\big(2\nabla\chi(y)\cdot\nabla u^{\mathrm{tot}}(y)+\Delta\chi(y)u^{\mathrm{tot}}(y)\big)\,\dy,
\end{multline*}
where the normal $\nu$ on $\partial D\cup \partial B_\varepsilon(x)$ is directed out of $D\setminus B_\varepsilon(x)$.
Since $x\notin\operatorname{supp}\chi$, standard computations (see for instance~\cite[Theorem 3.1]{ColKre}) yield, as $\varepsilon\to0$,
\begin{multline*}
u^{\mathrm{tot}}(x)+\int_{\partial D}\left[\frac{\partial\Phi(x,y)}{\partial\nu(y)}u^{\mathrm{tot}}(y)-\Phi(x,y)\frac{\partial u^{\mathrm{tot}}(y)}{\partial\nu}\right]\mathrm d\sigma(y)\\
=\int_{D}\Phi(x,y)\big(2\nabla\chi(y)\cdot\nabla u^{\mathrm{tot}}(y)+\Delta\chi(y)u^{\mathrm{tot}}(y)\big)\,\dy.
\end{multline*}
Similarly, applying Green's second identity to $u^{\mathrm{inc}}$ and $\Phi(x,\cdot)$ on $D$ yields
\begin{equation*}
-u^{\mathrm{inc}}(x)=\int_{\partial D}\left[\frac{\partial\Phi(x,y)}{\partial\nu(y)}u^{\mathrm{inc}}(y)-\Phi(x,y)\frac{\partial u^{\mathrm{inc}}}{\partial\nu}(y)\right]\mathrm d\sigma(y),
\end{equation*}
while applying \cite[Thm.~3.3]{ColKre} to $u$ on $\R^n\setminus\overline D$ yields
\begin{equation*}
0=\int_{\partial D}\left[\frac{\partial\Phi(x,y)}{\partial\nu(y)}u(y)-\Phi(x,y)\frac{\partial u}{\partial\nu}(y)\right]\mathrm d\sigma(y).
\end{equation*}
Altogether,
\begin{equation*}
u^{\mathrm{tot}}(x)=u^{\mathrm{inc}}(x)+\int_{D}\Phi(x,y)\big(2\nabla\chi(y)\cdot\nabla u^{\mathrm{tot}}(y)+\Delta\chi(y)u^{\mathrm{tot}}(y)\big)\,\dy.
\end{equation*}
On the other hand, let $\chi_1\in C^\infty_{0,\operatorname{supp}\chi}$ with $x\notin\operatorname{supp}\chi_1$.
Then, by a weak version of Green's second identity applied to $\chi u^{\mathrm{tot}}$ and $\chi_1\Phi(x,\cdot)$ on $\Omega$ (i.e., applying Green's first identity~\eqref{Eq:Green} twice), it holds that
\begin{equation*}
\int_{D}\Phi(x,y)\big(2\nabla\chi(y)\cdot\nabla u^{\mathrm{tot}}(y)+\Delta\chi(y)u^{\mathrm{tot}}(y)\big)\,\dy=\langle\partial_\nu u^{\mathrm{tot}},\mathrm{Tr}_\Gamma\overline{\Phi(x,\cdot)}\rangle_{\Bcal^*(\Gamma)\times\Bcal(\Gamma)},
\end{equation*}
since $\mathrm{Tr}_\Gamma \overline{u^{\mathrm{tot}}}=0$, hence
\begin{equation*}
u^{\mathrm{tot}}(x)=u^{\mathrm{inc}}(x)+\langle\partial_\nu u^{\mathrm{tot}},\mathrm{Tr}_\Gamma\overline{\Phi(x,\cdot)}\rangle_{\Bcal^*(\Gamma)\times\Bcal(\Gamma)}.
\end{equation*}
In other words 
 the scattered field is given by
\begin{equation*}
u(x)=\langle\partial_\nu u^{\mathrm{tot}},\mathrm{Tr}_\Gamma(\overline{\chi \Phi(x,\cdot)})\rangle_{\Bcal^*(\Gamma)\times\Bcal(\Gamma)}.
\end{equation*}
This equation  can be rewritten, using~\eqref{eq:Aphi2} and \eqref{eq:adjointy}, as
\begin{equation}\label{Eq:Link-ddn-ie}
u(x)=\langle \mathrm{Tr}^*_\Gamma\partial_\nu u^{\mathrm{tot}},\overline{\chi \Phi(x,\cdot)}\rangle=\Acal(\mathrm{tr}^*_\Gamma(\partial_\nu u^{\mathrm{tot}}))(x).
\end{equation}
Thus $u$ and $\cA(\mathrm{tr}^*_\Gamma(\partial_\nu u^{\mathrm{tot}}))$ coincide on $D\cap \Omega$.
Since both satisfy the Helmholtz equation on $\Omega$, which is connected, they coincide on $\Omega$ (e.g., \cite[p.~72]{ColKre}), i.e., $u=\cA(\mathrm{tr}^*_\Gamma(\partial_\nu u^{\mathrm{tot}}))|_\Omega$. But, by Theorem \ref{thm:trace}, since $k^2\not\in \sigma(-\Delta_D(\Omega_-))$, \eqref{eq:iemainB} has a unique solution $f\in \cB^*(\Gamma)$ and $u=\cA\mathrm{tr}^*_\Gamma f|_\Omega$. It follows from \eqref{eq:Tracesame} that $\Trc_\Gamma \cA\mathrm{tr}^*_\Gamma f=\Trc_\Gamma \cA\mathrm{tr}^*_\Gamma \partial_\nu u^{\mathrm{tot}}$, i.e., that $\sA f = \sA \partial_\nu u^{\mathrm{tot}}$, so that $f=\partial_\nu u^{\mathrm{tot}}$. 
\end{proof}

We finish this section by making an extension to the case of general compact $\Gamma$ satisfying \eqref{eq:GamRest}. To relate the solution $f\in \cB^*(\Gamma)$ to $\partial_\nu u^{\mathrm{tot}}\in \cB^*(\partial \Omega)$ in this more general case we embed $\cB^*(\partial \Omega)$ in $\cB^*(\Gamma)$. The embedding $j:\cB^*(\partial \Omega)\to \cB^*(\Gamma)$ is given by $j := (\Trcr^*_{\Gamma})^{-1}\Trcr^*_{\partial \Omega}$, so that, if $f\in \cB^*(\partial \Omega)$ and $\varphi \in \cB(\Gamma)$, in which case  $\varphi = \Trc_\Gamma u$ for some $u\in H^1(\R^n)$, then, by \eqref{eq:adjointy},
$$
\langle jf, \varphi\rangle_{\cB^*(\Gamma)\times \cB(\Gamma)} = \langle f, \Trc_{\partial \Omega} u\rangle_{\cB^*(\partial \Omega)\times \cB(\partial \Omega)}.
$$
Recall that $\Omega_-$ is given by \eqref{eq:Omegam}.
\begin{theorem} \label{cor:normal}
Assume $\Omega$ is an extension domain and that $k^2\not\in \sigma(-\Delta_D(\Omega_-))$, where $\Omega_-=O\setminus \Gamma$.
Then the unique solution $f\in\Bcal^*(\Gamma)$ to the operator equation~\eqref{eq:iemainB} can be expressed in terms of the total field $u^{\mathrm{tot}}$ as
\begin{equation*}
f=j (\partial_\nu u^{\mathrm{tot}}).
\end{equation*}
\end{theorem}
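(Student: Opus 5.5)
The plan is to repeat the argument of Proposition \ref{prop:normal}, which used $\Gamma=\partial\Omega$ only at the very end. The representation step did not use it. For $x\in D\cap\Omega$, the same Green's identity arguments (classical ones on $D\setminus B_\varepsilon(x)$ and $\R^n\setminus\overline D$, and the weak identity \eqref{Eq:Green} on $\Omega$) give
$$u(x)=\langle\partial_\nu u^{\mathrm{tot}},\Trc_{\partial\Omega}(\overline{\chi\Phi(x,\cdot)})\rangle_{\cB^*(\partial\Omega)\times\cB(\partial\Omega)}=\cA(\Trcr^*_{\partial\Omega}\partial_\nu u^{\mathrm{tot}})(x).$$
These arguments involve only $\Omega$, its boundary $\partial\Omega=\partial O$, and the extension-domain hypothesis. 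By unique continuation for the Helmholtz equation on the connected set $\Omega$, this identity then holds on all of $\Omega$. I will take this step from the earlier proof essentially verbatim.

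The new step is to express this through $j$. By definition $j=(\Trcr^*_\Gamma)^{-1}\Trcr^*_{\partial\Omega}$, so $\Trcr^*_\Gamma j(\partial_\nu u^{\mathrm{tot}})=\Trcr^*_{\partial\Omega}\partial_\nu u^{\mathrm{tot}}$. For $j$ to be well defined I need $\Trcr^*_{\partial\Omega}\phi\in H^{-1}_{\partial\Omega}\subset H^{-1}_\Gamma$, which holds because $\partial\Omega\subset\Gamma$ by \eqref{eq:GamRest}, and $\Trcr^*_\Gamma$ is onto $H^{-1}_\Gamma$. Setting $\phi:=\Trcr^*_{\partial\Omega}\partial_\nu u^{\mathrm{tot}}\in H^{-1}_\Gamma$, the representation gives $u=\cA\phi$ on $\Omega$.

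It remains to show that $\phi$ solves \eqref{eq:iemain2}; then Theorem \ref{thm:trace} shows $j(\partial_\nu u^{\mathrm{tot}})=(\Trcr^*_\Gamma)^{-1}\phi$ solves \eqref{eq:iemainB}, and uniqueness (since $k^2\notin\SigO$) gives $f=j(\partial_\nu u^{\mathrm{tot}})$. This is the main obstacle, because knowing $u=\cA\phi$ only on $\Omega$ does not immediately give $\Trc_\Gamma\cA\phi=-\Trc_\Gamma u^{\mathrm{inc}}$ on all of $\Gamma$, which also meets $\Omega_-$ and $\Gamma\cap\intt(O)$. I would handle it with the converse part of Proposition \ref{prop:ie1}. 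Extend $u$ by $-u^{\mathrm{inc}}$ on $O$; that proposition says this extension lies in $\Hol(\R^n)$ and equals $\cA\phi'$ with $\phi'=-(\Delta+k^2)u\in H^{-1}_{\partial O}$, and that $\phi'$ solves \eqref{eq:iemain2}. So I must show $\phi=\phi'$, or equivalently $w:=\cA(\phi-\phi')=0$.

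Both $\phi$ and $\phi'$ are supported in $\partial O$, so $w$ is a radiating solution of the Helmholtz equation in $\R^n\setminus\partial O$, it lies in $\Hol(\R^n)$, and it vanishes on $\Omega$. Hence $\Trc_{\partial O}w=\Trc_{\partial\Omega}(w|_\Omega)=0$ by \eqref{eq:Tracesame}, using the extension-domain hypothesis. By \eqref{eq:ker}, $\chi w\in\widetilde H^1(\partial O^c)$, so $w|_{\intt O}\in H^1_0(\intt O)$ (after localising) and it satisfies Helmholtz there. I expect $\intt O$ may carry Dirichlet eigenvalues at $k^2$, so this alone may not force $w=0$. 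In that case I would instead verify \eqref{eq:iemain2} for $\phi$ directly. The quantity $\Trc_\Gamma(\cA\phi+u^{\mathrm{inc}})$ vanishes on $\partial O$ by the above, and on the remaining part of $\Gamma$ I would use the $\cB^*$-pairing and the fact that $\phi$ is the adjoint of a trace onto $\partial\Omega$. If that becomes delicate, the fallback is to check \eqref{eq:iemainV} only against test functions, using $\widetilde H^1(\Gamma^c)$-decompositions.
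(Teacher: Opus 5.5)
\textbf{There is a gap in the one case where the theorem has real content.} Several parts of your proposal are sound: the representation step, the well-definedness of $j$, and the reduction to showing $\phi=\phi'$. Here $\phi:=\Trcr^*_{\partial\Omega}\partial_\nu u^{\mathrm{tot}}$, and $\phi':=-(\Delta+k^2)u$ is the density from the converse part of Proposition \ref{prop:ie1}, which is the unique solution of \eqref{eq:iemain2} by Corollary \ref{cor:IEfinal}. Your $w$-argument also finishes the proof when $k^2\notin\sigma(-\Delta_D(\intt(O)))$, using that $\tilde w=0$ q.e., hence a.e., on $\partial O$. That is the paper's first step, phrased there as invertibility of $\sA_{\partial\Omega}$.

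The remaining case is $k^2\in\sigma(-\Delta_D(\intt(O)))\setminus\SigO$. For instance, with $\Gamma=O$ we have $\SigO=\emptyset$ and the theorem is asserted for every $k$. You give no actual argument for this case, and the obstruction is built into your route. Let $A_{\partial O}$ denote the operator $A$ of \eqref{eq:Akdef} with $\Gamma=\partial O$; its kernel is nonzero here by Theorem \ref{thm:invert}. Every $\phi'+\psi$ with $\psi$ in that kernel also satisfies $\cA(\phi'+\psi)=u$ on $\Omega$. So nothing derived from the representation on $\Omega$ can single out $\phi'$.

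Your fallback does not escape this. Write $U^{\mathrm{tot}}$ for $u^{\mathrm{tot}}$ extended by zero. Then $\cA\phi+u^{\mathrm{inc}}=U^{\mathrm{tot}}+w$ and $\Trc_\Gamma U^{\mathrm{tot}}=0$. So ``verifying \eqref{eq:iemain2} directly'' means showing $\Trc_\Gamma w=0$. Given $k^2\notin\SigO$, that is equivalent to $w=0$, which is exactly what you could not prove.

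\textbf{How the paper closes it.} The paper uses a limiting argument in $k$. Take an interval around $k_0$ that avoids $\SigO$ and the other eigenvalues of $-\Delta_D(\intt(O))$. On it, $\sA_k=\Trc_\Gamma\chi\cA_k\Trcr^*_\Gamma$ is invertible and norm-continuous in $k$, by continuity of the cut-off resolvent plus interpolation. Hence $f_k=\sA_k^{-1}h_k$, $u^{\mathrm{tot}}_k\in\Hol(\R^n)$ and $\partial_\nu u^{\mathrm{tot}}_k\in\cB^*(\partial\Omega)$ all depend continuously on $k$. The identity $f_k=j(\partial_\nu u^{\mathrm{tot}}_k)$, already known for $k\neq k_0$, then passes to the limit.

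\textbf{A shorter fix within your setup.} You can also bypass $w$, and even the representation step, by computing $\phi'$ directly through testing. Let $D\supset O$ be open with $(\Delta+k^2)u^{\mathrm{inc}}=0$ in $D$, and take $v\in C_0^\infty(D)$. The extended scattered field is $U^{\mathrm{tot}}-u^{\mathrm{inc}}$, and the $u^{\mathrm{inc}}$ part contributes nothing. Since $U^{\mathrm{tot}}$ is locally in $\widetilde H^1(\Omega)$, it vanishes together with its gradient a.e.\ on $O$. Using \eqref{Eq:Green} with a cut-off equal to $1$ near $O\cup\supp(v)$, together with \eqref{eq:Tracesame}, you get
\[
\langle\phi',v\rangle=\int_\Omega\big(\nabla u^{\mathrm{tot}}\cdot\nabla\bar v-k^2u^{\mathrm{tot}}\bar v\big)\,\dx=\langle\partial_\nu u^{\mathrm{tot}},\Trc_{\partial\Omega}v\rangle_{\cB^*(\partial\Omega)\times\cB(\partial\Omega)}=\langle\phi,v\rangle .
\]
Both distributions are supported in $\partial O$, so this gives $\phi'=\phi$ for every $k>0$. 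Hence $\Trcr^*_\Gamma f=\phi$ and $f=j(\partial_\nu u^{\mathrm{tot}})$, with no case distinction needed.
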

\begin{proof} Arguing as in the proof of Proposition \ref{prop:normal}, the solution to the scattering problem can be written as $u=\cA(\mathrm{tr}^*_{\partial \Omega}(\partial_\nu u^{\mathrm{tot}}))|_\Omega$.  Arguing as at the end of the proof of Proposition \ref{prop:normal} we have, by Theorem \ref{thm:trace}, since $k^2\not\in \sigma(-\Delta_D(\Omega_-))$, that \eqref{eq:iemainB} has a unique solution $f\in \cB^*(\Gamma)$,  $u=(\cA\mathrm{tr}^*_\Gamma f)|_\Omega$, and, by \eqref{eq:Tracesame}, that $\Trc_{\partial \Omega} \cA\mathrm{tr}^*_\Gamma f=\Trc_{\partial \Omega} \cA\mathrm{tr}^*_{\partial \Omega} \partial_\nu u^{\mathrm{tot}}$. Further, by Theorem \ref{thm:trace} and Corollary \ref{cor:IEfinal}, $\Trcr^*_\Gamma f$ is the unique solution to \eqref{eq:iemain2} and $\Trcr^*_\Gamma f\in H^{-1}_{\partial \Omega}$. Thus, where $\sA_{\partial \Omega}:= \Trc_{\partial \Omega} \cA \Trcr^*_{\partial \Omega}$, i.e., $\sA_{\partial \Omega}$ denotes the operator $\sA$ of \eqref{eq:sAdef} when $\Gamma=\partial \Omega$, we have that $\sA_{\partial \Omega} (\Trcr^*_{\partial \Omega})^{-1}\Trcr^*_\Gamma f = \sA_{\partial \Omega} \partial_\nu u^{\mathrm{tot}}$. Thus, 
 if $k^2\not\in \sigma(-\Delta_D(\intt(O)))$,  so that $\sA_{\partial \Omega}$ is invertible by Theorem \ref{thm:trace}, it follows that $(\Trcr^*_{\partial \Omega})^{-1}\Trcr^*_\Gamma f = \partial_\nu u^{\mathrm{tot}}$, i.e., $f = j(\partial_\nu u^{\mathrm{tot}})$. 

To show that this result holds for all $k>0$ with $k^2\not\in \sigma(-\Delta_D(\Omega_-))$ we use the following limiting argument. Suppose $k_0>0$ and $k^2_0\in \sigma(-\Delta_D(\intt(O))) \setminus \sigma(-\Delta_D(\Omega_-))$. Choose $\varepsilon>0$ such that, where $\widetilde I_\varepsilon := (k^2_0-\varepsilon,k^2_0+\epsilon)$, it holds that $\widetilde I_\epsilon \cap \sigma(-\Delta_D(\intt(O)))=\{k^2_0\}$ and  $\widetilde I_\varepsilon \cap \sigma(-\Delta_D(\Omega_-))=\emptyset$, and let $I_\varepsilon := \{k>0:k^2\in \widetilde I_\varepsilon\}$. Denote $\cA$ and $\sA$ temporarily by $\cA_k$ and $\sA_k$ to indicate their dependence on $k$. Then, by Theorem \ref{thm:trace}, $\sA_k$ is invertible for $k\in I_\varepsilon$. 

By \eqref{eq:inv}, $\cA_k = -R(k^2)$, where $R(z):=(-\Delta -z)^{-1}$ is the resolvent of the Laplacian. For $\chi\in C_0^\infty(\R^n)$ the so-called {\em cut-off resolvent}, $\chi R(k^2)\chi:L^2(\R^n)\to H^2(\R^n)$, depends continuously in the norm topology  on $k$, for $k\in \C$ with $\mathrm{Re}\, k>0$ and $\mathrm{Im}\, k\geq 0$, by standard limiting absorption arguments (see \cite[Thm.~4.1]{Agmon75} and its proof). It follows  by an interpolation argument (see, e.g., the proof of \cite[Prop.~2.1]{SiavashSimon2}) that $\chi \cA_k \chi:H^{-1}(\R^n)\to H^1(\R^n)$ also depends continuously on $k$ in the norm topology, for every $\chi\in C_0^\infty(\R^n)$, in particular for
$\chi\in C_{0,\Gamma}^\infty$.  Thus $\chi \cA_k:H^{-1}_\Gamma \to H^1(\R^n)$ depends continuously on $k$ in the norm topology, so that the same is true for
$\sA_k=\Trc_\Gamma \chi \cA_k\Trcr^*_\Gamma$. Since $\sA_k$ is invertible for $k\in I_\varepsilon$,  also $\sA_k^{-1}$ depends continuously on $k$ in the norm topology, for $k\in I_\varepsilon$.

For the case that $k=k_0$, let $u^{\mathrm{inc}}$, $u$, and $u^{\mathrm{tot}}$ denote the incident, scattered, and total fields, respectively, and let $f\in \cB^*(\Gamma)$ denote the unique solution of \eqref{eq:iemainB}. Choose $u^{\mathrm{inc}}_k\in \Hol(\R^n)$, for $k\in I_\varepsilon$, so that $u^{\mathrm{inc}}_{k_0} = u^{\mathrm{inc}}$, so that $u^{\mathrm{inc}}_k$ depends continuously in $\Hol(\R^n)$ on $k\in I_\epsilon$, and such that, for some fixed domain $D\supset O$,  $(\Delta + k^2)u_k^i = 0$ in $D$. Then, for $k\in I_\varepsilon$, $h_k:= -\Trc_\Gamma u^{\mathrm{inc}}_k$ depends continuously on $k$ in $\cB(\Gamma)$, so that $f_k:=\sA_k^{-1}h_k$ also depends continuously on $k$, and note that $f_{k_0}=f$. Further, for $k\in I_\varepsilon$, $u^{\mathrm{tot}}_k:= u^{\mathrm{inc}}_k+\cA_k\Trcr^*_\Gamma f_k$ depends continuously on $k$ in $\Hol(\R^n)$. Thus it follows from  \eqref{Eq:Norm-der} that $\partial_\nu u^{\mathrm{tot}}_k$ depends continuously on $k$ in $\cB^*(\partial \Omega)$. But note that $u^{\mathrm{tot}}=u^{\mathrm{tot}}_{k_0}$, by Theorem \ref{thm:trace}, so that $\partial_\nu u^{\mathrm{tot}} = \partial_\nu u^{\mathrm{tot}}_{k_0}$. Further, by the first part of the proof and Theorem \ref{thm:trace} we have that $f_k=j(\partial_\nu u^{\mathrm{tot}}_k)$ for $k\in I_\varepsilon \setminus \{k_0\}$. It follows, by taking the limit $k\to k_0$, that the same holds for $k=k_0$, i.e., that $f=j(\partial_\nu u^{\mathrm{tot}})$. 
\end{proof}

\section{Reformulation  as an integral equation} \label{sec:ie}

In this section we show, as \S\ref{sec:Aie}, that, if $\Gamma$ is the support of a Radon measure that satisfies certain assumptions, then the equation  \eqref{eq:sAdef} on $\Gamma$ can be rewritten, equivalently, as a integral equation on $\Gamma$, where the integration is with respect to the measure $\mu$. A crucial role in our arguments will be played by a second trace operator, $\widetilde \Trc_\Gamma$, that we introduce in \S\ref{sec:trL2}. The distinction between this trace operator and the trace operator $\Trc_\Gamma$ introduced in \S\ref{sec:trace} is subtle: for $u\in \cS(\R^n)$ both $\widetilde \Trc_\Gamma u$ and $\Trc_\Gamma u$ are equivalence classes of functions defined on $\Gamma$, but the equivalence relation is different; equality $\mu$-a.e.\ for $\widetilde \Trc_\Gamma$,  equality q.e.\ for $\Trc_\Gamma$. In \S\ref{sec:emb} we introduce Assumption \ref{ass:Gabriel}, under which these two equivalence relations coincide. (Conditions on $\Gamma$ and $\mu$ under which this holds have been established recently in \cite{Hinz}.) This assumption allows us in embed the space $\cB(\Gamma)$ of \S\ref{sec:trace} into $L^2(\Gamma,\mu)$, as made clear in Proposition \ref{prop:equiv}, that fleshes out and extends results announced as \cite[Corollary 5.1]{Hinz} that will be key to proving convergence, in the next section, of our Galerkin method. 

The function spaces we use through this section are recalled in \S\ref{sec:fs} or have been introduced in \S\ref{sec:trace}. The properties of measures we make use of are recalled in 
\S\ref{sec:meas}.

\subsection{Traces to $L^2(\Gamma,\mu)$} \label{sec:trL2}

Suppose that $\mu$ is a Radon measure whose support is the compact set $\Gamma$.  Define a second trace operator $\widetilde \Trc_\Gamma:\cS(\R^n)\to L^2(\Gamma,\mu)$ by
\begin{equation} \label{eq:tr2}
\widetilde \Trc_\Gamma v := v|_\Gamma, \qquad v\in \cS(\R^n),
\end{equation}
noting that this is a continuous linear operator which has dense range (since the range is dense in $C(\Gamma)$ and $C(\Gamma)$ is dense in $L^2(\Gamma,\mu)$ by Lusin's theorem (e.g., \cite[Thm.~2.24]{Rudin}), as $\mu$ is Radon). (We are omitting some details here; the image $\widetilde \Trc_\Gamma v$ is not precisely $v|_\Gamma$ but is the equivalence class in $L^2(\Gamma,\mu)$ that has $v|_\Gamma$ as an element.) Let $C_0(\R^n)$ denote the space of continuous functions that vanish at infinity, a Banach space equipped with the supremum norm that we denote by $\|\cdot\|_{C_0(\R^n)}$. Clearly, 
$$
\|\widetilde \Trc_\Gamma v\|_{L^2(\Gamma,\mu)} \leq (\mu(\Gamma))^{1/2}\|v\|_{C_0(\R^n)}, \qquad u\in \cS(\R^n),
$$
so that, since $\cS(\R^n)$ is dense in $C_0(\R^n)$, $\widetilde \Trc_\Gamma$ has a unique extension (which we will denote again by $\widetilde \Trc_\Gamma$) that is a continuous linear operator $C_0(\R^n)\to L^2(\Gamma,\mu)$ that satisfies \eqref{eq:tr2}, indeed this continuous extension is given by \eqref{eq:tr2} for $v\in C_0(\R^n)$.

We will make, for much of the rest of the paper, the following assumption on $\Gamma$ and $\mu$.

\begin{assumption} \label{ass:cont}
The Radon measure $\mu$ whose support is the compact set $\Gamma$ is such that, for some constant $C>0$,
\begin{equation} \label{eq:bd}
\|\widetilde \Trc_\Gamma v\|_{L^2(\Gamma,\mu)} \leq C\|v\|_{H^1(\R^n)}, \qquad v\in \cS(\R^n).
\end{equation}
\end{assumption}
Clearly (cf.~\cite[\S18.5]{Triebel97FracSpec}), if Assumption \ref{ass:cont} holds, then $\widetilde \Trc_\Gamma$ has a unique extension (which we will again denote by $\widetilde \Trc_\Gamma$) that is a continuous linear operator $H^1(\R^n)\to L^2(\Gamma,\mu)$ with the property that \eqref{eq:tr2} holds. (Again, we are omitting some details; we are understanding  $\cS(\R^n)$ as embedded (densely) in $H^1(\R^n)$ in the usual way, so are identifying $v\in \cS(\R^n)$ with the equivalence class in $H^1(\R^n)$ that contains $v$.) 
Further, if Assumption \ref{ass:cont} holds, then, for every measurable $E\subset \Gamma$, 
\begin{equation} \label{eq:capmu}
\mathrm{cap}(E)=0 \Rightarrow \mu(E)=0.
\end{equation}
To see this, suppose $E\subset \Gamma$ is measurable with $\mu(E)>0$ but $\mathrm{cap}(E)=0$. As $\mu$ is a regular Borel measure, there exists a compact  set $K\subset E$ with $\mu(K)>0$ and $\mathrm{cap}(K)=0$. By the definition \eqref{eq:capdef} there exists a sequence $(u_m)_{m\in \N}\subset \cS(\R^n)$ such that, for each $m$, $u_m\geq 1$ on $K$ so that $\|\widetilde \Trc_\Gamma u_m\|_{L^2(\Gamma,\mu)}\geq (\mu(K))^{1/2}$, while also $\|u_m\|_{H^1(\R^n)}\to 0$ as $m\to\infty$, contradicting \eqref{eq:bd}. From \cite[Thm.~7.2.1]{adams2012function}, and an inspection of its proof, we have, in fact, the following stronger result.
\begin{theorem}
Assumption \ref{ass:cont} holds if and only if, for some constant $c>0$,
\begin{equation} \label{eq:charact}
\mu(K) \leq c^2\, \mathrm{cap}(K), \qquad \mbox{for all compact } K\subset \Gamma.
\end{equation}
Further, the least possible values for the constants $c$ in \eqref{eq:charact} and $C$ in \eqref{eq:bd} are equivalent; precisely, there exists $c_n\geq 1$, depending only on the dimension $n$, such that, if $C>0$ is such that \eqref{eq:bd} holds, then \eqref{eq:charact} holds with $c:= C$, while if \eqref{eq:charact} holds for some $c>0$, then \eqref{eq:bd} holds with $C:=  c_n c$.
\end{theorem}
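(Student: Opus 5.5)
The forward direction (\eqref{eq:bd} implies \eqref{eq:charact} with $c:=C$) is a sharpened version of the argument given just before the statement for \eqref{eq:capmu}. Fix a compact $K\subset\Gamma$. By the definition \eqref{eq:capdef}, $\mathrm{cap}(K)$ is the infimum of $\|u\|^2_{H^1(\R^n)}$ over $u\in\cS(\R^n)$ with $u\geq 1$ on $K$. For any such $u$,
$$\mu(K)\le \int_K|u|^2\,\rd\mu\le \|\widetilde\Trc_\Gamma u\|^2_{L^2(\Gamma,\mu)}\le C^2\|u\|^2_{H^1(\R^n)}.$$
Taking the infimum over $u$ gives $\mu(K)\le C^2\,\mathrm{cap}(K)$. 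If \eqref{eq:capdef} is phrased with $C_0^\infty$ test functions or with $H^1$ functions that are $\ge1$ near $K$, the same computation applies after density; the only care needed is to reduce to smooth test functions, for which $\widetilde\Trc_\Gamma$ is the pointwise restriction.

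The converse is the substantive part, and I would follow the proof of \cite[Thm.~7.2.1]{adams2012function} (the Maz'ya capacitary strong type inequality route). Take $v\in\cS(\R^n)$ and write $\|v\|^2_{L^2(\mu)}$ by the layer-cake formula as $\int_0^\infty \mu(\{x\in\Gamma:|v(x)|>t\})\,\rd(t^2)$. Each level set $\{|v|>t\}\cap\Gamma$ is relatively open in $\Gamma$. By inner regularity of $\mu$, its measure is a supremum over compact subsets $K$, so \eqref{eq:charact} gives $\mu(\{|v|>t\})\le c^2\,\mathrm{cap}(\{|v|>t\})$, with capacity extended to open sets in the usual outer way. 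It then remains to invoke the capacitary strong type inequality
$$\int_0^\infty \mathrm{cap}(\{|v|>t\})\,\rd(t^2)\le c_n^2\|v\|^2_{H^1(\R^n)},$$
which holds with $c_n$ depending only on $n$. Combining the two bounds yields \eqref{eq:bd} with $C:=c_nc$. Since the forward direction gives $c\le C$, the optimal constants are equivalent in the sense claimed.

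The main obstacle is the capacitary strong type inequality itself. My plan is to cite it from \cite{adams2012function} (Thm.~7.1.1 there) with the Bessel-potential capacity of order $1$. The one point to check is that the capacity \eqref{eq:capdef} used in this paper is comparable, with $n$-dependent constants, to the $H^1=L^2_1$ Bessel capacity in \cite{adams2012function}; the $H^1$ norms are equivalent, so this only changes $c_n$. If the direct proof is wanted instead, I would use the dyadic truncation argument: set $v_j:=\min(\max(|v|-2^j,0),2^j)$, so that $2^{-j}v_j$ is admissible for $\mathrm{cap}(\{|v|\ge2^{j+1}\})$. Summing gives $\sum_j 4^j\mathrm{cap}(\{|v|\ge 2^{j+1}\})\le\sum_j\|v_j\|^2_{H^1}\lesssim\|v\|^2_{H^1}$, using that the gradients of the $v_j$ have disjoint supports and that $|v_j|\le |v|$. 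This handles the gradient part cleanly; bounding $\sum_j\|v_j\|^2_{L^2}$ also needs only the disjointness of the layers, up to a constant. Finally, \eqref{eq:bd} extends from $\cS(\R^n)$ by density, so no further argument is needed.
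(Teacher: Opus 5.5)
Your proposal is correct and follows the same route as the paper, which simply appeals to \cite[Thm.~7.2.1]{adams2012function} and an inspection of its proof. There, $\mu(K)\le C^2\,\mathrm{cap}(K)$ comes from testing \eqref{eq:bd} against the admissible functions in \eqref{eq:capdef} (a quantitative version of the argument for \eqref{eq:capmu}), and the converse is the layer-cake formula combined with the capacitary strong type inequality, exactly as you describe. Your self-contained truncation proof of that inequality is a nice bonus specific to $H^1$ (it even gives an absolute constant $c_n$). One small clarification: for the $L^2$ part the estimate you need is $0\le v_j\le 2^j\mathbf{1}_{\{|v|>2^j\}}$, which gives $\sum_j v_j^2\le\frac43|v|^2$ pointwise by a geometric sum, since the $v_j$ have nested rather than disjoint supports.
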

\noindent See \cite[Thm.~7.2.1]{adams2012function} for other equivalent characterisations of Assumption \ref{ass:cont}. Note that if $\Gamma$ is non-empty, so that $\mu(\Gamma)>0$, and Assumption \ref{ass:cont} holds,  so that $\capp(\Gamma)>0$ by \eqref{eq:capmu}, then  $H_\Gamma^{-1}\neq \{0\}$, by Remark \ref{rem:zero}.

The following theorem, a special case of \cite[Thm.~1]{Jonsson79}, provides a large class of measures that satisfy Assumption \ref{ass:cont}. Recall that $\mu$ is upper $d$-regular if it satisfies \eqref{eq:udr}.

\begin{theorem} \label{thm:Bie} If $\mu$ is a Radon measure whose support is the compact set $\Gamma$ and $\mu$ is upper $d$-regular on $\Gamma$, for some $d\in (n-2,n]$, then $\mu$ and $\Gamma$ satisfy Assumption \ref{ass:cont}.
\end{theorem}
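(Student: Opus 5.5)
The plan is to verify the capacitary condition \eqref{eq:charact} from the preceding theorem, so that Assumption \ref{ass:cont} follows at once. Upper $d$-regularity \eqref{eq:udr} gives $\mu(B_r(x))\leq c_\mu r^d$ for $x\in\Gamma$ and $0<r\leq 1$. Because $\Gamma$ is compact and $\mu(\Gamma)<\infty$, this bound extends to all $x\in\R^n$ and all $r>0$, at the cost of a larger constant: for $r\le 1$, any ball $B_r(x)$ that meets $\Gamma$ is contained in $B_{2r}(y)$ for some $y\in\Gamma$, and for $r>1$ we simply use $\mu(B_r(x))\le\mu(\Gamma)\le\mu(\Gamma)r^d$.

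A more robust route avoids capacities altogether and proves the trace estimate \eqref{eq:bd} directly through the Bessel potential representation. Let $v\in\cS(\R^n)$ and write $v=G_1*f$, where $G_1$ is the Bessel kernel of order $1$ and $\|f\|_{L^2}=\|v\|_{H^1}$. The kernel satisfies $G_1(z)\lesssim |z|^{1-n}$ for $|z|\le 1$ (with a logarithmic modification when $n=1$, which does not arise here since $n\geq 2$) and decays exponentially for large $|z|$. By duality, \eqref{eq:bd} is equivalent to the boundedness of $g\mapsto G_1*(g\mu)$ from $L^2(\Gamma,\mu)$ to $L^2(\R^n)$.

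This reduces to controlling a $TT^*$-type operator on $L^2(\mu)$ whose kernel is $K(x,y)=(G_1*G_1)(x-y)=G_2(x-y)$. For $n\ge3$ this kernel behaves like $|x-y|^{2-n}$ near the diagonal, like $\log$ when $n=2$, and decays exponentially. It therefore suffices to show that $\sup_{x\in\Gamma}\int_\Gamma G_2(x-y)\,\rd\mu(y)<\infty$, since Schur's test, applied with the symmetric kernel, then gives boundedness on $L^2(\mu)$.

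The required bound is a layer-cake computation: decompose into dyadic annuli $2^{-j-1}<|x-y|\le 2^{-j}$ and use the upper regularity to get
\[
\int_\Gamma G_2(x-y)\,\rd\mu(y)\lesssim \sum_{j\ge0}2^{j(n-2)}\mu\bigl(B_{2^{-j}}(x)\bigr)+\mu(\Gamma)\lesssim\sum_{j\geq 0}2^{-j(d-n+2)}+\mu(\Gamma).
\]
The series converges precisely because $d>n-2$. In the case $n=2$ the factor $2^{j(n-2)}$ is replaced by one of size $j$, and the sum still converges because $d>0$.

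The main obstacle is the kernel estimate for $G_2=G_1*G_1$, together with getting the exact threshold $d>n-2$. I would take the standard asymptotics of Bessel kernels from \cite{adams2012function} rather than derive them. Alternatively, one could cite \cite[Thm.~7.2.1]{adams2012function} directly: the condition $\sup_x\int G_2(x-y)\,\rd\mu(y)<\infty$, which says the nonlinear potential is bounded, is one of the equivalent characterisations listed there.
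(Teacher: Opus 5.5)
Your proof is correct, but it takes a different route from the paper. The paper simply cites \cite[Thm.~1]{Jonsson79}, a trace theorem for Bessel potentials on the support of an upper $d$-regular measure. It applies that theorem with $k=0$, $\alpha=1$, $\beta=1-(n-d)/2$ and $d<n$. The case $d=n$ is reduced to $d<n$ by noting that \eqref{eq:udr} only involves $r\leq 1$, so upper $n$-regularity implies upper $d'$-regularity for every $d'<n$.

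That citation delivers more than is needed, namely a bounded trace into a Besov-type space on $\Gamma$. Your duality/$TT^*$ argument proves exactly \eqref{eq:bd}, with the explicit constant $C^2\leq\sup_{x\in\Gamma}\int_\Gamma G_2(x-y)\,\rd\mu(y)$. It is self-contained apart from standard Bessel-kernel asymptotics and a routine Fubini step. It also treats $d=n$ without special handling, and it shows transparently that $d>n-2$ is precisely the condition for the dyadic sum to converge.

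Your route also gives a by-product. $G_2$ is bounded by a multiple of $f_n(|\cdot|)$ near the origin and is bounded away from it. Hence your argument shows that, for a finite Radon measure, Assumption \ref{ass:int} on its own already implies Assumption \ref{ass:cont}.

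Two small points. First, your opening plan to verify \eqref{eq:charact} is never used and can be dropped. The same goes for the extension of \eqref{eq:udr} to all $x\in\R^n$, since the Schur test only needs $x\in\Gamma$. Second, your closing remark overstates \cite[Thm.~7.2.1]{adams2012function}. Boundedness of $G_2*\mu$ is sufficient for the trace inequality but not equivalent to it. For example, for $n\geq 3$, $\rd\mu=|x|^{-2}\mathbf{1}_{B_1(0)}\,\rd x$ satisfies \eqref{eq:bd} by Hardy's inequality, while $G_2*\mu(0)=\infty$. The equivalent characterisations in that theorem are localised conditions such as \eqref{eq:charact}.
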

\begin{proof}
Apply  \cite[Thm.~1]{Jonsson79} with (in the notation of that theorem) $k=0$, $\alpha=1$, and $\beta=1-(n-d)/2$ with $0<d<n$, noting that if $\mu$ is upper $d$-regular with $d=n$ then it is upper $d$-regular for $0<d<n$.
\end{proof}

\subsection{Embedding $\cB(\Gamma) $ in $L^2(\Gamma,\mu)$} \label{sec:emb}

In the case that Assumption \ref{ass:cont} holds, let us define a mapping $\iota:\cB(\Gamma)\to L^2(\Gamma,\mu)$ by
\begin{equation} \label{eq:idef}
\iota := \widetilde \Trc_\Gamma \circ E_\Gamma,
\end{equation}
where the isometry $E_\Gamma:\cB(\Gamma)\to H^1(\R^n)$ is the right inverse of $\Trc_\Gamma$ defined in \S\ref{sec:trace}. (The various function spaces and the mappings between them are shown in Figure \ref{fig:cd}.) Note that, since $\|E_\Gamma\|=1$, 
$$
\|\iota\| \leq C,
$$
where $C$ is the constant in Assumption \ref{ass:cont}.
Note also that $\widetilde \Trc_\Gamma v=0$ if $v\in \widetilde H^1(\Gamma^c)$,  by \eqref{eq:tr2}, the density of $\cS(\R^n)$ in $H^1(\R^n)$, and the continuity of $\widetilde \Trc_\Gamma:H^1(\R^n)\to L^2(\Gamma,\mu)$, so that $\widetilde H^1(\Gamma^c)\subset \ker \widetilde \Trc_\Gamma$. Further, if  $f\in V_1(\Gamma^c)$, then $E_\Gamma\circ \Trc_\Gamma f= E_\Gamma\circ \Trcr_\Gamma f=f$. Thus, and recalling \eqref{eq:ker} and \eqref{eq:perp}, 
\begin{equation} \label{eq:Traces}
\widetilde \Trc_\Gamma = \iota \circ \Trc_\Gamma.
\end{equation}
We note also that $\widetilde \Trc_\Gamma v = v|_\Gamma$ for $v \in H^1(\R^n)\cap C_0(\R^n)$, indeed 
\begin{equation} \label{eq:TrE}
\widetilde \Trc_\Gamma v = \tilde v|_\Gamma, \qquad v\in H^1(\R^n),
\end{equation}
where $\tilde v$ is any quasi-continuous representative of $v$, in particular the representative defined by \eqref{eq:qcrep}.
To see this, note that \eqref{eq:TrE} holds with $v$ replaced by $v_r\in H^1(\R^n)\cap C_0(\R^n)$, where $v_r$ is defined by \eqref{eq:qcrep}, and then take the limit $r\to 0^+$, noting that, as discussed below \eqref{eq:qcrep}, $v_r\to \tilde v$ in $H^1(\R^n)$, so that $\widetilde \Trc_\Gamma v_r=v_r|_\Gamma\to \widetilde \Trc_\Gamma v$ in $L^2(\Gamma,\mu)$, so that (e.g., \cite[Thm.~3.12]{Rudin}) there exists a sequence $r_n\to 0$ such that   $v_{r_n}|_\Gamma\to \widetilde \Trc_\Gamma v$ $\mu$-a.e. But also, as noted below \eqref{eq:qcrep}, $v_r|_\Gamma\to \tilde v|_\Gamma$, as $r\to 0^+$, q.e.\ on $\Gamma$, and so, by \eqref{eq:capmu}, also $\mu$-a.e., so \eqref{eq:TrE} holds.

If $f\in \cB(\Gamma)$ and $\tilde f$ is a representative of $f$, it follows from \eqref{eq:idef} and \eqref{eq:TrE} that $\iota f$ is the equivalence class in $L^2(\Gamma,\mu)$ of functions that are equal $\mu$-a.e.\ to $\tilde f$. 
Let
\begin{equation} \label{eq:Hdef}
\bH(\Gamma,\mu) := \iota(\cB(\Gamma))
\end{equation}
denote the range of $\iota$, which, by \eqref{eq:Traces}, is also the range of $\widetilde \Trc_\Gamma$ since $\Trc_\Gamma$ is surjective, and let us equip $\bH(\Gamma,\mu)$ with the norm (cf.\ \eqref{eq:Bnorm})
\begin{equation} \label{eq:Hnorm}
\|f\|_{\bH(\Gamma,\mu)} := \min \{ \|v\|_{H^1(\R^n)} : v\in H^1(\R^n) \mbox{ and } \widetilde \Trc_F v=f\} = \min \{ \|g\|_{\cB(\Gamma)} : g\in \cB(\Gamma) \mbox{ and } \iota g=f\},
\end{equation}
with which $\bH(\Gamma,\mu)$ is a Hilbert space. Define $\iota_\bH:\cB(\Gamma)\to \bH(\Gamma,\mu)$ by $\iota_\bH f = \iota f$, $f\in \cB(\Gamma)$, so that, by definition, $\iota_\bH$ is surjective and is continuous with $\|\iota_\bH\|\leq 1$. We will see shortly that $\iota$ is injective, so that $\iota$ is an embedding and $\iota_\bH$ is a unitary isomorphism, if and only if the following additional assumption on $\mu$ and $\Gamma$ holds. Recall from \S\ref{sec:fs} that, for $u\in H^1(\R^n)$, $\tilde u$ denotes a quasi-continuous representative of $u$, and that any two quasi-continuous representatives are equal q.e. 
 
 \begin{assumption} \label{ass:Gabriel} For every $u\in H^1(\R^n)$,
 $$
 \tilde u = 0 \;\, \mu\mbox{-a.e. on } \Gamma\ \Leftrightarrow\ \tilde u=0 \mbox{ q.e. on } \Gamma.
 $$
 \end{assumption} 
\noindent  Note that if Assumption \ref{ass:cont} is satisfied then, by \eqref{eq:capmu}, the left implication ($\Leftarrow$) holds in the above assumption.

That the above assumption is satisfied in the case that $\Gamma$ is a $d$-set, for $n-2<d<n$, follows from  \cite[Prop.~6.7]{CaetanoJFA} (which result establishes a version of Assumption \ref{ass:Gabriel} for general Bessel-potential and Besov spaces). Cases beyond that of the $d$-set have been investigated recently in \cite{Hinz}, which paper is concerned precisely with establishing conditions that ensure that Assumption \ref{ass:Gabriel} holds, and that the analogous result holds with $H^1(\R^n)$ replaced by some Bessel-potential space. The results established there imply, in particular, the following result (see \cite[Cor.~2.7]{Hinz} and also \cite[Thms.~2.3, 2.9]{Hinz}).

\begin{figure}

\begin{tikzcd}[sep=small]
	{H^1(\R^n)} & \supset & {V_1(\Gamma^c)} &&&& {(V_1(\Gamma^c))^*=H^{-1}_\Gamma} & \subset & {H^{-1}(\R^n)} \\
	\\
	&& {\cB(\Gamma)} &&&& {\cB^*(\Gamma)} \\
	\\
	\\
	&& {\mathbb{H}(\Gamma,\mu)} & \subset & {L^2(\Gamma,\mu)} & \subset & {\mathbb{H}^*(\Gamma,\mu)}
	\arrow["{\Trc_\Gamma}"', shift right=3, two heads, from=1-1, to=3-3]
	\arrow["{E_\Gamma}"', hook, from=3-3, to=1-1]
	\arrow["{\Trcr_\Gamma=\Trc_\Gamma|_{V_1(\Gamma^c)}}", hook, two heads, from=1-3, to=3-3]
	\arrow["{\iota_{\mathbb{H}}}"', hook, two heads, from=3-3, to=6-3]
	\arrow["{\iota =\widetilde \Trc_\Gamma\circ E_\Gamma}"{yshift=4,xshift=-13}, hook, from=3-3, to=6-5]
	\arrow["{\Trcr_\Gamma^*}"', hook, two heads, from=3-7, to=1-7]
	\arrow["{\iota^*}",hook,from=6-5, to=3-7]
	\arrow["{\iota_{\mathbb{H}}^*}"', hook, two heads,  from=6-7, to=3-7]
	\arrow["{\widetilde \Trc_\Gamma=\iota\circ\Trc_\Gamma}"', bend right=60,from=1-1,to=6-5]
	\arrow["{\widetilde \Trcr_\Gamma=\widetilde \Trc_\Gamma|_{V_1(\Gamma^c)}=\iota\circ \Trcr_\Gamma}" {yshift=2pt,xshift=-33pt}, shift right, controls={+(0,2) and +(0,5)},from=1-3,to=6-5]
	\arrow["{\widetilde \Trc_\Gamma^*}"',bend right=60,hook,from=6-5,to=1-9]
	\arrow["{\widetilde \Trcr^*_\Gamma}=\Trcr^*_\Gamma\circ \iota^*"{yshift=2pt,xshift=23pt},shift right, controls={+(0,5) and +(0,2)},from=6-5,to=1-7]
\end{tikzcd}
\caption{A commutative diagram showing the key function spaces and the mappings between them, assuming that Assumptions \ref{ass:cont} and \ref{ass:Gabriel} hold. All mappings are continuous. The mappings $\hookrightarrow$, $\twoheadrightarrow$, $\hookdoubleheadrightarrow$ are, additionally, injective, surjective, and a unitary isomorphism, respectively. $A\subset B$, for Hilbert spaces $A$ and $B$, indicates that $A$ is a subset of $B$ with continuous embedding.}
\label{fig:cd}
\end{figure}

\begin{theorem} \label{thm:summ}
Suppose that $J\in \N$ and, for $j=1,\ldots,J$, that $a_j>0$, $n-2<d_j\leq n$, and the compact set $\Gamma_j\subset \R^n$ is a $d_j$-set. Let
\begin{equation} \label{eq:musum}
\mu := \sum_{j=1}^J a_j \cH^{d_j}|_{\Gamma_j} \quad \mbox{and} \quad \Gamma := \bigcup_{j=1}^J \Gamma_j = \supp(\mu).
\end{equation}
Then $\mu$ and $\Gamma$ satisfy Assumptions \ref{ass:cont} and \ref{ass:Gabriel}.
\end{theorem}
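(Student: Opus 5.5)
The plan is to verify the two assumptions separately, reducing each to the single-$d$-set case and then combining over the finitely many pieces. Two facts about the measure are used throughout: $\supp(\mu)=\Gamma$, and $\mu$ is a finite Radon measure. Both hold because each $\cH^{d_j}|_{\Gamma_j}$ is a finite Radon measure supported on $\Gamma_j$. By the definition of a $d$-set, $\cH^{d_j}(B_r(x)\cap\Gamma_j)\asymp r^{d_j}$ for $x\in\Gamma_j$ and $0<r\le 1$.

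\textbf{Assumption \ref{ass:cont}.} Let $\mu_j:=a_j\cH^{d_j}|_{\Gamma_j}$. Since $d_j\in(n-2,n]$ and $\Gamma_j$ is a $d_j$-set, $\mu_j$ is upper $d_j$-regular on $\Gamma_j$ (the upper bound in the $d$-set definition, extended from centres in $\Gamma_j$ to arbitrary centres by doubling the radius). Theorem \ref{thm:Bie} applied to $\mu_j$ and its support $\Gamma_j$ then gives
\[
\|v|_{\Gamma_j}\|_{L^2(\Gamma_j,\mu_j)}\le C_j\|v\|_{H^1(\R^n)}, \qquad v\in\cS(\R^n).
\]
Since $\mu=\sum_j\mu_j$, we have $\|v|_\Gamma\|^2_{L^2(\Gamma,\mu)}=\sum_j\|v|_{\Gamma_j}\|^2_{L^2(\Gamma_j,\mu_j)}$, so \eqref{eq:bd} holds with $C^2=\sum_jC_j^2$. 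Alternatively, the capacity characterisation \eqref{eq:charact} is additive in $\mu$ in the same way. This step is routine.

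\textbf{Assumption \ref{ass:Gabriel}.} The implication ($\Leftarrow$) follows from \eqref{eq:capmu}. For ($\Rightarrow$), suppose $u\in H^1(\R^n)$ and $\tilde u=0$ $\mu$-a.e.\ on $\Gamma$. Because $\mu\ge\mu_j$, we get $\tilde u=0$ $\cH^{d_j}$-a.e.\ on $\Gamma_j$ for each $j$. The single-$d$-set result then applies: for $n-2<d_j<n$ this is \cite[Prop.~6.7]{CaetanoJFA}, which gives $\tilde u=0$ q.e.\ on $\Gamma_j$. Since a finite union of sets of zero capacity has zero capacity (capacity is countably subadditive), $\tilde u=0$ q.e.\ on $\Gamma=\bigcup_j\Gamma_j$. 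This is the ``localisation'' idea behind \cite[Cor.~2.7]{Hinz}, and citing that corollary is the intended shortcut.

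\textbf{Main obstacle: the case $d_j=n$.} A compact $n$-set need not be the closure of its interior, so one must show that $\tilde u=0$ Lebesgue-a.e.\ on $\Gamma_j$ implies $\tilde u=0$ q.e.\ there. The plan is to use the fine (Lebesgue-point) characterisation of the quasi-continuous representative,
\[
\tilde u(x)=\lim_{r\to0}\frac{1}{|B_r(x)|}\int_{B_r(x)}u \quad\mbox{q.e.},
\]
which holds at every point outside a set of capacity zero. The $n$-set condition $|B_r(x)\cap\Gamma_j|\ge c\,r^n$ says $\Gamma_j$ has positive lower density at each of its points, and $u=0$ a.e.\ on $\Gamma_j$. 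Combining these with Poincaré/Lebesgue-point estimates in the style of \cite[Thm.~6.2.1]{adams2012function}, one shows that $\tilde u(x)=0$ at every $x\in\Gamma_j$ outside a capacity-zero set. This is exactly the content of \cite[Thms.~2.3, 2.9]{Hinz}, which apply to measures satisfying a lower $d$-regularity bound with $d>n-2$. I would invoke those theorems directly for each $\mu_j$, including the case $d_j=n$, which removes the need to treat $d_j=n$ separately.
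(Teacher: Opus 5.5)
Your argument is correct, but it is organised differently from the paper's two-line proof.

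\textbf{Assumption \ref{ass:cont}.} The paper observes that $\mu$ itself is upper $d$-regular on $\Gamma$ with $d=\min_j d_j$, and applies Theorem \ref{thm:Bie} once. This uses the discussion below \eqref{eq:udr}, in particular the extension of \eqref{eq:udr} to arbitrary centres, since $\mu_i(B_r(x))$ must be controlled for $x\in\Gamma_j$, $i\neq j$. You instead apply Theorem \ref{thm:Bie} to each $\mu_j$ on its own support $\Gamma_j$ and add the squared trace bounds. This sidesteps the extension to arbitrary centres altogether, so your parenthetical about doubling the radius is unnecessary.

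\textbf{Assumption \ref{ass:Gabriel}.} The paper simply cites \cite[Cor.~2.7]{Hinz}, which handles the sum directly. You localise instead: $\mu\ge\mu_j$ passes $\mu$-a.e.\ vanishing to each piece, the single-$d$-set result applies on each $\Gamma_j$, and subadditivity of capacity glues the finitely many exceptional sets. The paper's route is shorter. Yours shows that the multi-dimensional case needs nothing beyond the single-$d$-set theory.

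\textbf{The case $d_j=n$.} The averaged formula you display is not enough by itself, since $u$ may be non-zero on $B_r(x)\setminus\Gamma_j$. You need the stronger Lebesgue-point property, also part of \cite[Thm.~6.2.1]{adams2012function}: $|B_r(x)|^{-1}\int_{B_r(x)}|u(y)-\tilde u(x)|\,\rd y\to 0$ for q.e.\ $x$. Take such an $x\in\Gamma_j$. Since $u=0$ a.e.\ on $\Gamma_j$, this average is at least $|\tilde u(x)|\,|B_r(x)\cap\Gamma_j|/|B_r(x)|$. By the $n$-set lower bound this is at least $c'|\tilde u(x)|$, so $\tilde u(x)=0$; no Poincar\'e inequality is needed.

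So $d_j=n$ is actually the elementary case. The substantive input is the $d_j<n$ result \cite[Prop.~6.7]{CaetanoJFA}. With this observation, your proof no longer needs \cite{Hinz} at all.
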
 
\begin{proof} That Assumption \ref{ass:cont} is satisfied follows from Theorem \ref{thm:Bie}, since $\mu$ is $d$-upper regular, for $d=\min\{d_1,\ldots,d_J\}$, as discussed below \eqref{eq:udr}. That Assumption \ref{ass:Gabriel} holds follows from \cite[Cor.~2.7]{Hinz}. 
\end{proof}

Let $\widetilde \Trcr_\Gamma := \widetilde \Trc_\Gamma|_{V_1(\Gamma^c)}$, so that $\widetilde \Trcr_\Gamma = \iota \circ \Trcr_\Gamma$, and, identifying $L^2(\Gamma,\mu)$ with its dual, let $\widetilde \Trcr_\Gamma^*:L^2(\Gamma,\mu)\to H_\Gamma^{-1}$ and $\widetilde \Trc_\Gamma^*:L^2(\Gamma,\mu)\to H^{-1}(\R^n)$ denote the adjoints of $\widetilde \Trcr_\Gamma$ and  $\widetilde \Trc_\Gamma$, respectively, so that
\begin{equation} \label{eq:adj}
( \phi,\widetilde\Trc_\Gamma u )_{L^2(\Gamma,\mu)} = \langle  \widetilde \Trc_\Gamma^*\phi,u \rangle, \qquad \phi\in L^2(\Gamma,\mu), \;u\in H^1(\R^n),
\end{equation}
and $\widetilde \Trcr_F^*\phi=\widetilde \Trc_\Gamma^*\phi$, $\phi\in L^2(\Gamma,\mu)$. Similarly, let $\iota^*:L^2(\Gamma,\mu)\to \cB^*(\Gamma)$ denote the adjoint of $\iota$.
The following result provides the promised characterisation of the injectivity of $\iota$.

\begin{proposition} \label{prop:equiv}
Suppose that $\Gamma$ and $\mu$ satisfy Assumption \ref{ass:cont}. Then the following are equivalent:
\begin{enumerate}
\item[(i)] Assumption \ref{ass:Gabriel} holds;
\item[(ii)] $\iota$ is injective;
\item[(iii)] the range of $\iota^*$ is dense in $\cB^*(\Gamma)$;
\item[(iv)] $\ker \widetilde \Trc_\Gamma= \widetilde H^1(\Gamma^c)$;
\item[(v)] $\iota_\bH$ is a unitary isomorphism;
\item[(vi)] $\widetilde \Trcr_\Gamma$ is injective;
\item[(vii)] the ranges of $\widetilde \Trcr_\Gamma^*$ and $\widetilde \Trc_\Gamma^*$ are dense in $H_\Gamma^{-1}$.
\end{enumerate}
\end{proposition}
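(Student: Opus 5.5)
The plan is to prove the equivalences using the structure of the commutative diagram in Figure \ref{fig:cd}. The key facts are: $\widetilde \Trc_\Gamma = \iota\circ \Trc_\Gamma$ from \eqref{eq:Traces}; $\Trcr_\Gamma:V_1(\Gamma^c)\to\cB(\Gamma)$ and $\Trcr^*_\Gamma$ are unitary isomorphisms; and Hilbert-space duality, i.e.\ a bounded operator is injective if and only if its adjoint has dense range. Everything except (i) is essentially functional analysis, so I would organise the proof as a chain (i)$\Leftrightarrow$(ii), followed by showing each of (iii)--(vii) is equivalent to (ii).

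\textbf{(i)$\Leftrightarrow$(ii).} Let $f\in\cB(\Gamma)$ and write $f=\Trc_\Gamma u$ with $u=E_\Gamma f\in H^1(\R^n)$, taking a quasi-continuous representative $\tilde u$. By \eqref{eq:TrE}, $\iota f=\widetilde\Trc_\Gamma u$ is the $\mu$-a.e.\ class of $\tilde u|_\Gamma$, while $f=0$ in $\cB(\Gamma)$ means $\tilde u=0$ q.e.\ on $\Gamma$. Hence $\iota f=0 \Leftrightarrow \tilde u=0$ $\mu$-a.e.\ on $\Gamma$. Injectivity of $\iota$ therefore says exactly that, for every $u\in H^1(\R^n)$, $\tilde u=0$ $\mu$-a.e.\ implies $\tilde u=0$ q.e. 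Every $u$ arises this way up to an element of $\ker\Trc_\Gamma$, which changes neither side. The reverse implication in Assumption \ref{ass:Gabriel} always holds by \eqref{eq:capmu}. This step is the only one needing care: one must be precise that the equivalence classes match via \eqref{eq:TrE} and the fact that quasi-continuous representatives agree q.e.

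\textbf{Remaining equivalences.} (ii)$\Leftrightarrow$(iii) is the standard fact $\overline{\mathrm{ran}\,\iota^*}=(\ker\iota)^\perp$, with $\cB(\Gamma)$ reflexive. For (iv), use $\widetilde\Trc_\Gamma=\iota\circ\Trc_\Gamma$ together with $\ker\Trc_\Gamma=\widetilde H^1(\Gamma^c)$ from \eqref{eq:ker} and surjectivity of $\Trc_\Gamma$. This gives $\ker\widetilde\Trc_\Gamma=\Trc_\Gamma^{-1}(\ker\iota)$, which equals $\widetilde H^1(\Gamma^c)$ if and only if $\ker\iota=\{0\}$. For (v), $\iota_\bH$ is surjective, and by the norm \eqref{eq:Hnorm} it is a coisometry (quotient map), so it is unitary if and only if it is injective, which is (ii). For (vi), $\widetilde\Trcr_\Gamma=\iota\circ\Trcr_\Gamma$ with $\Trcr_\Gamma$ bijective, so $\widetilde\Trcr_\Gamma$ is injective if and only if $\iota$ is. Finally, for (vii), $\widetilde\Trcr^*_\Gamma:L^2(\Gamma,\mu)\to H^{-1}_\Gamma=(V_1(\Gamma^c))^*$ has dense range if and only if $\widetilde\Trcr_\Gamma$ is injective. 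Moreover, $\widetilde\Trc^*_\Gamma\phi=\widetilde\Trcr^*_\Gamma\phi$ for all $\phi$, since $\widetilde\Trc_\Gamma$ vanishes on $\widetilde H^1(\Gamma^c)$, so the two ranges coincide. This gives (vi)$\Leftrightarrow$(vii).
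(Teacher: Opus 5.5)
Your proposal is correct and follows essentially the same route as the paper. Both use the factorisations $\widetilde\Trc_\Gamma=\iota\circ\Trc_\Gamma$ and $\widetilde\Trcr_\Gamma=\iota\circ\Trcr_\Gamma$ (with \eqref{eq:ker} and surjectivity of $\Trc_\Gamma$) for (iv) and (vi), the injectivity/dense-range duality of adjoints for (iii) and (vii), and the quotient norm \eqref{eq:Hnorm} for (v). Your ``if and only if'' treatment of (i)$\Leftrightarrow$(ii) via \eqref{eq:TrE} is actually cleaner than the paper's. The paper's paragraph labelled (ii)$\Rightarrow$(i) re-argues the contrapositive of (i)$\Rightarrow$(ii), and the genuinely missing direction there is exactly the one-line argument you give.
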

\begin{proof} 
If $f\in \cB(\Gamma)$, so that $f=\tilde v|_\Gamma$ q.e. on $\Gamma$, for some $v\in H^1(\R^n)$, where $\tilde v$ is any quasi-continuous representative of $v$, and $\iota f=0$, then $\tilde v = 0$ $\mu$-a.e. on $\Gamma$, so that also $\tilde v=0$ q.e. on $\Gamma$ if Assumption \ref{ass:Gabriel} holds, so that $f=0$. Thus (i)$\Rightarrow$(ii). 

If $\iota$ is not injective then there exists $v\in H^1(\R^n)$ with quasi-continuous representative $\tilde v$ such that $f=\Trc_\Gamma v=\tilde v|_\Gamma$ is non-zero on some set of positive capacity on $\Gamma$ but $\iota f=\tilde v|_\Gamma =0$ $\mu$-a.e. on $\Gamma$, contradicting Assumption \ref{ass:Gabriel}. Thus (ii)$\Rightarrow$(i). 

It is an elementary result that if $A_j$ is a vector space, for $j=1,2,3$, $F:A_1\to A_2$ and $G:A_2\to A_3$ are linear mappings, and $F$ is surjective, then $\ker F=\ker (G\circ F)$ if and only if $G$ is injective. Applying this result with $A_1=H^1(\R^n)$, $A_2=\cB(\Gamma)$, $A_3=L^2(\Gamma,\mu)$, $F=\Trc_\Gamma$ and $G= \iota$, so that, by \eqref{eq:Traces}, $G\circ F = \widetilde \Trc_\Gamma$, we see that (ii)$\Leftrightarrow$(iv), since $\ker   \Trc_\Gamma = \widetilde H^1(\Gamma^c)$ by \eqref{eq:ker}. Since $\widetilde \Trcr_\Gamma = \iota \circ \Trcr_\Gamma$ and $\Trcr_\Gamma$ is injective, the same argument shows that (ii)$\Leftrightarrow$(vi). 

The mappings $\widetilde \Trcr_\Gamma^*$ and $\widetilde \Trc_\Gamma^*$ have the same range. Since $\widetilde \Trcr_\Gamma$ is the adjoint of $\widetilde \Trcr_\Gamma^*$, by a standard property of adjoint maps \cite[Thm.~4.12]{Rudin2} the range of $\widetilde \Trcr_\Gamma^*$ is dense in  $H_\Gamma^{-1}$ if and only if $\widetilde \Trcr_\Gamma$ is injective. Thus (vi)$\Leftrightarrow$(vii). Similarly, since $\iota$ is the adjoint of $\iota^*$, (ii)$\Leftrightarrow$(iii). 

Finally, if $\iota$, and so also $\iota_\bH$, is not injective, then $\iota_\bH$ is neither an isometry nor an isomorphism. But if $\iota$ is injective then $\iota_\bH$ is bijective, so that, if $g\in \cB(\Gamma)$ and $f=\iota_\bH g$, then $g$ is the unique element $h\in \cB(\Gamma)$ with $\iota h= f$ so that, by \eqref{eq:Hnorm}, $\|f\|_{\bH(\Gamma,\mu)} = \|g\|_{\cB(\Gamma)}$, and $\iota_\bH$ is a unitary isomorphism. Thus  (ii)$\Leftrightarrow$(v). 
\end{proof}

\begin{remark} \label{rem:embedding} In the case that Assumptions \ref{ass:cont} and \ref{ass:Gabriel} hold, it follows from the discussion above, including the above proposition, that $\iota:\cB(\Gamma)\to L^2(\Gamma,\mu)$ is an embedding (i.e., a continuous, injective linear mapping), moreover with dense range, so that, through the embedding $\iota$, $\cB(\Gamma)$ is densely embedded in $L^2(\Gamma,\mu)$. Further, through $\iota^*:L^2(\Gamma,\mu)\to \cB^*(\Gamma)$, $L^2(\Gamma,\mu)$ is densely embedded in $\cB^*(\Gamma)$, so that $(\cB(\Gamma),L^2(\Gamma,\mu),\cB^*(\Gamma))$ forms a Gelfand triple. 
\end{remark}

Let $\bH^*(\Gamma,\mu)$ denote the dual space of $\bH(\Gamma,\mu)$ 
and note that (since we identify $L^2(\Gamma,\mu)$ with its dual space) $\bH(\Gamma,\mu)\subset L^2(\Gamma,\mu)\subset \bH^*(\Gamma,\mu)$, and these embeddings are continuous with dense range, as long as Assumption \ref{ass:cont} holds, i.e., $(\bH(\Gamma,\mu),L^2(\Gamma,\mu), \bH^*(\Gamma,\mu))$ is a Gelfand triple.  Let $\iota^*_\bH:\bH^*(\Gamma,\mu)\to \cB^*(\Gamma)$ denote the adjoint of $\iota_\bH$, so that $\iota^*=\iota_\bH^*|_{L^2(\Gamma,\mu)}$, and define $\bA:\bH^*(\Gamma,\mu)\to \bH(\Gamma,\mu)$ by
\begin{equation} \label{eq:bAdef}
\bA := \iota_\bH \sA \iota_\bH^* = \iota_\bH \Trcr_\Gamma A \Trcr_\Gamma^*\iota_\bH^* = \widetilde \Trc_\Gamma A \Trcr_\Gamma^*\iota_\bH^*.
\end{equation}
Since, by Proposition \ref{prop:equiv}, $\iota_\bH$ (and so also $\iota_\bH^*$) are unitary isomorphisms if Assumptions \ref{ass:cont} and \ref{ass:Gabriel} hold, the following corollary follows from Theorem \ref{thm:trace}.
\begin{corollary} \label{cor:iefinal} Suppose $\Gamma$ satisfies \eqref{eq:GamRest}.  If Assumptions \ref{ass:cont} and \ref{ass:Gabriel} hold, then $\bA$ is unitarily equivalent to $\sA$ and $A$, so that $\bA$ is Fredholm of index zero and is invertible if and only if  $k^2\not \in \SigO$. Further, if $\phi\in H_\Gamma^{-1}$ and $f:= (\iota_\bH^*)^{-1}(\Trcr_\Gamma^*)^{-1}\phi\in \bH^*(\Gamma,\mu)$, then $\phi$ satisfies \eqref{eq:iemain2} if and only if $f$ satisfies
\begin{equation} \label{eq:iefinal}
\bA f = \mathfrak{g} := \iota_\bH \Trcr_\Gamma g = \widetilde \Trc_\Gamma g = -\widetilde \Trc_\Gamma(\chi u^{\mathrm{inc}}) = -u^{\mathrm{inc}}|_\Gamma. 
\end{equation}
Moreover, if  $f\in \bH^*(\Gamma,\mu)$ satisfies \eqref{eq:iefinal} and $u=\cA\Trcr_\Gamma^*\iota_\bH^* f$, then $u|_\Omega$ is the unique solution of the exterior sound-soft scattering problem.
\end{corollary}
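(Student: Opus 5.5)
The proof transports Theorem~\ref{thm:trace} through the unitary isomorphism $\iota_\bH$. By Proposition~\ref{prop:equiv}, Assumptions~\ref{ass:cont} and \ref{ass:Gabriel} together make $\iota_\bH:\cB(\Gamma)\to\bH(\Gamma,\mu)$ a unitary isomorphism. Hence its adjoint $\iota_\bH^*:\bH^*(\Gamma,\mu)\to\cB^*(\Gamma)$ is also a unitary isomorphism, and $(\iota_\bH^*)^{-1}=(\iota_\bH^{-1})^*$.

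\emph{Unitary equivalence.} The definition $\bA=\iota_\bH\sA\iota_\bH^*$ in \eqref{eq:bAdef} can be rewritten as
\[
\bA=\iota_\bH\sA(\iota_\bH^{-1})^{*\,-1}.
\]
This exhibits $\bA$ as unitarily equivalent to $\sA$, in the sense $\bA=U\sA V$ with $U=\iota_\bH$ and $V=\iota_\bH^*$ both unitary. Since $\sA=\Trcr_\Gamma A\Trcr_\Gamma^*$ with $\Trcr_\Gamma$ and $\Trcr_\Gamma^*$ unitary, $\bA$ is likewise unitarily equivalent to $A$. Being Fredholm of index zero, and being invertible, are both preserved under composition with isomorphisms on either side. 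So these properties transfer from Theorem~\ref{thm:trace} (or Theorem~\ref{thm:invert}): $\bA$ is Fredholm of index zero, and invertible if and only if $k^2\notin\SigO$.

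\emph{Equivalence of equations.} Given $\phi\in H^{-1}_\Gamma$, set $\tilde f:=(\Trcr_\Gamma^*)^{-1}\phi\in\cB^*(\Gamma)$, so that $f=(\iota_\bH^*)^{-1}\tilde f$. By Theorem~\ref{thm:trace}, $\phi$ satisfies \eqref{eq:iemain2} if and only if $\sA\tilde f=\Trcr_\Gamma g$. Applying the isomorphism $\iota_\bH$, this holds if and only if
\[
\iota_\bH\sA\iota_\bH^* f=\iota_\bH\Trcr_\Gamma g,
\]
that is, $\bA f=\mathfrak g$.

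\emph{The chain of identities for $\mathfrak g$.} First, $\iota_\bH\Trcr_\Gamma g=\iota\,\Trc_\Gamma g=\widetilde\Trc_\Gamma g$, using $g\in V_1(\Gamma^c)$ and \eqref{eq:Traces}. Next, $g=-P(\chi u^{\mathrm{inc}})$, and $(I-P)(\chi u^{\mathrm{inc}})\in\widetilde H^1(\Gamma^c)\subset\ker\widetilde\Trc_\Gamma$; this inclusion was noted before \eqref{eq:Traces}. Therefore $\widetilde\Trc_\Gamma g=-\widetilde\Trc_\Gamma(\chi u^{\mathrm{inc}})$. Finally, $\chi$ can be chosen so that $\chi u^{\mathrm{inc}}\in C_0^\infty(\R^n)$ with $\chi=1$ near $\Gamma$. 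By \eqref{eq:tr2}, extended to $C_0(\R^n)$, this gives $\widetilde\Trc_\Gamma(\chi u^{\mathrm{inc}})=u^{\mathrm{inc}}|_\Gamma$.

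\emph{Last statement.} If $\bA f=\mathfrak g$, put $\phi:=\Trcr_\Gamma^*\iota_\bH^*f\in H^{-1}_\Gamma$. By the equivalence just shown, $\phi$ solves \eqref{eq:iemain2}. Corollary~\ref{cor:IEfinal} then gives that $u|_\Omega$, with $u=\cA\phi$, is the unique solution of the scattering problem.

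The argument is essentially bookkeeping. The only point needing care is the identification of $\mathfrak g$, where the key input is that $\widetilde\Trc_\Gamma$ annihilates $\widetilde H^1(\Gamma^c)$.
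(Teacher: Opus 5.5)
Your proposal is correct and is essentially the paper's argument. The paper obtains the corollary directly from Theorem~\ref{thm:trace}, noting that under Assumptions~\ref{ass:cont} and~\ref{ass:Gabriel} the maps $\iota_\bH$ and $\iota_\bH^*$ are unitary isomorphisms by Proposition~\ref{prop:equiv}; you have written out the bookkeeping it leaves implicit, including the identification of $\mathfrak g$ via \eqref{eq:Traces}, $\widetilde H^1(\Gamma^c)\subset\ker\widetilde\Trc_\Gamma$, and the independence of the choice of $\chi$ (your rewriting of $\iota_\bH^*$ as $(\iota_\bH^{-1})^{*\,-1}$ is tautological but harmless).
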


\subsection{Writing $\bA$ as an integral operator} \label{sec:Aie}

Related to the last sentence of Corollary \ref{cor:iefinal}, the next lemma 
characterises $\cA\phi$ as an integral with respect to the measure $\mu$ when Assumption \ref{ass:cont} holds and, for some $f\in L^2(\Gamma,\mu)\subset \bH^*(\Gamma,\mu)$, $\phi=\Trcr_\Gamma^*\iota_\bH^* f$, in which case, since $\widetilde \Trcr_\Gamma = \iota\circ \Trcr_\Gamma$ and $\iota_\bH^* f = \iota^* f$,  $\phi=\widetilde \Trcr_\Gamma^*f = \widetilde \Trc_\Gamma^*f$. Note that the integral in \eqref{eq:intrep1} is well-defined for all $x\in \Gamma^c$ and $f\in L^2(\Gamma,\mu)$ (indeed for $f\in L^1(\Gamma,\mu)$) since $\Phi(x,\cdot)\in C(\Gamma)$, so that $\Phi(x,\cdot)f$ is $\mu$-integrable on $\Gamma$.

\begin{lemma} \label{lem:Aref} If $\Gamma$ and $\mu$ satisfy Assumption \ref{ass:cont} and $f\in L^2(\Gamma,\mu)$, then
\begin{equation} \label{eq:intrep1}
\cA\Trcr_\Gamma^*\iota_\bH^* f(x) =  \cA\widetilde \Trc_\Gamma^* f(x) = \int_\Gamma \Phi(x,y)f(y)\, \rd \mu(y), \qquad  x\in \Gamma^c.
\end{equation}
\end{lemma}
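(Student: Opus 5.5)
The first equality in \eqref{eq:intrep1} is bookkeeping, and the second follows from \eqref{eq:Aphi2} combined with the adjoint relation \eqref{eq:adj}. For the first equality I would note, as in the text preceding the lemma, that $\iota_\bH^* f = \iota^* f$ for $f\in L^2(\Gamma,\mu)$, since $\iota^*=\iota_\bH^*|_{L^2(\Gamma,\mu)}$. Then $\Trcr_\Gamma^*\iota^* = (\iota\circ\Trcr_\Gamma)^* = \widetilde\Trcr_\Gamma^*$, and $\widetilde\Trcr_\Gamma^* f=\widetilde\Trc_\Gamma^* f$. Hence $\phi:=\Trcr_\Gamma^*\iota_\bH^* f=\widetilde\Trc_\Gamma^*f\in H^{-1}_\Gamma$, and applying $\cA$ gives the first equality.

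For the second equality, fix $x\in\Gamma^c$. Since $\Gamma$ is compact and $x\notin\Gamma$, I can choose $\chi\in C^\infty_{0,\Gamma}$ with $x\notin\supp(\chi)$. By \eqref{eq:Aphi2},
$$\cA\phi(x)=\langle \phi,\overline{\chi\Phi(x,\cdot)}\rangle.$$
Here $v:=\overline{\chi\Phi(x,\cdot)}$ lies in $C_0^\infty(\R^n)\subset\cS(\R^n)\subset H^1(\R^n)$, because $\Phi(x,\cdot)$ is smooth away from $x$. By \eqref{eq:adj},
$$\langle \widetilde\Trc_\Gamma^* f, v\rangle=(f,\widetilde\Trc_\Gamma v)_{L^2(\Gamma,\mu)}.$$
Since $v\in\cS(\R^n)$, \eqref{eq:tr2} gives $\widetilde\Trc_\Gamma v=v|_\Gamma=\overline{\Phi(x,\cdot)}|_\Gamma$, using $\chi=1$ near $\Gamma$. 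Therefore
$$\cA\phi(x)=\int_\Gamma f(y)\,\overline{\overline{\Phi(x,y)}}\,\rd\mu(y)=\int_\Gamma\Phi(x,y)f(y)\,\rd\mu(y).$$

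The only point I expect to need care is the sesquilinear convention. I need \eqref{eq:dual} and $(\cdot,\cdot)_{L^2}$ to be linear in the first argument and antilinear in the second, so that the conjugates cancel as above. I would check this against \eqref{eq:dual}; if the convention is reversed, the same computation goes through with the conjugates placed in the other slot. Everything else is routine; in particular Assumption \ref{ass:cont} is used only to guarantee that $\widetilde\Trc_\Gamma^*$ is well defined.
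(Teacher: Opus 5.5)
Your proposal is correct and follows essentially the same route as the paper: the first equality is the adjoint bookkeeping $\Trcr_\Gamma^*\iota_\bH^* f=(\iota\circ\Trcr_\Gamma)^*f=\widetilde\Trc_\Gamma^* f$. The second comes from \eqref{eq:Aphi2} with a cut-off in $C^\infty_{0,\Gamma}$ whose support avoids $x$, followed by \eqref{eq:adj} and \eqref{eq:tr2}. Your sesquilinear conventions are the right ones, so the conjugates cancel as you wrote.
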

\begin{proof} Suppose $\sigma\in C_{0,\Gamma}^\infty(\R^n)$ and $\phi\in H^{-1}_\Gamma$. Then, by \eqref{eq:Aphi2},
\begin{equation} \label{eq:rep}
\cA \phi(x) = \langle\phi,\overline{\sigma \Phi(x,\cdot)}\rangle, \qquad  x\not\in \supp(\sigma).
\end{equation}
Thus, if $f\in L^2(\Gamma,\mu)$, so that $\widetilde \Trc^*_\Gamma f\in H_\Gamma^{-1}$, 
$$
\cA \widetilde \Trc^*_\Gamma f(x) = \langle\widetilde \Trc^*_\Gamma f,\overline{\sigma \Phi(x,\cdot)}\rangle = (f,\widetilde \Trc_\Gamma(\overline{\sigma \Phi(x,\cdot)}))_{L^2(\Gamma,\mu)} = \int_\Gamma \Phi(x,y)f(y)\, \rd \mu(y), \quad  x\not\in \supp(\sigma).
$$
As this holds for every $\sigma\in C_{0,\Gamma}^\infty(\R^n)$, the result follows.
\end{proof}

We noted above that the integral in \eqref{eq:intrep1} is well-defined for $x\in \Gamma^c$. It is also well-defined when $x\in \Gamma$ under  additional constraints on $f$ and $\mu$. In the following assumption we use the notation
$$
f_n(r) := \left\{\begin{array}{ll}
|\log(r)|, & n=2,\\
r^{2-n}, & n> 2.
\end{array}\right.
$$

\begin{assumption} \label{ass:int} 
It holds that
$$
\sup_{x\in \R^n} \int_{B_r(x)}f_n(|x-y|)\rd\mu(y) \to 0 \quad \mbox{as} \quad r\to 0^+.
$$
\end{assumption} 

\noindent The relevance of this assumption to our integrals is that it follows from \eqref{eq:Phidef} and the power series expansion for $H_\nu^{(1)}$ (e.g., \cite[\S10.8]{NIST}) that Assumption \ref{ass:int} holds if and only if
\begin{equation} \label{eq:aint2}
\sup_{x\in \R^n} \int_{B_r(x)}|\Phi(x,y)|\rd\mu(y) \to 0 \quad \mbox{as} \quad r\to 0^+.
\end{equation}

Like Assumptions \ref{ass:cont} and \ref{ass:Gabriel}, the above assumption holds for a large class of measures. The following lemma is an immediate consequence of Lemma \ref{lem:ub}, noting that if $\mu$ is upper $d$-regular on $\Gamma$ then it is upper $d$-regular on $B_R(0)$, for every $R>0$. 

\begin{lemma} \label{lem:growth} If $\mu$ is a Radon measure whose support is the compact set $\Gamma$ and $\mu$ is upper $d$-regular on $\Gamma$, for some $d\in (n-2,n]$, then $\mu$ and $\Gamma$ satisfy Assumption \ref{ass:int}.
\end{lemma}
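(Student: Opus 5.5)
The plan is to derive Assumption \ref{ass:int} directly from upper $d$-regularity by a dyadic decomposition of the ball $B_r(x)$. This is essentially the content of Lemma \ref{lem:ub}. Recall that upper $d$-regularity on $\Gamma$ (\eqref{eq:udr}) means $\mu(B_\rho(z))\le c_\mu \rho^d$ for $z\in\Gamma$ and $0<\rho\le \mathrm{diam}(\Gamma)$ (or all $\rho\le 1$, say). The first step is to extend this bound to arbitrary centres $x\in\R^n$, or at least to all $x$ in a large ball $B_R(0)$, since the supremum in Assumption \ref{ass:int} runs over all $x\in\R^n$.

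\textbf{Step 1 (arbitrary centres).} If $B_\rho(x)\cap\Gamma=\emptyset$, then $\mu(B_\rho(x))=0$. Otherwise pick $z\in B_\rho(x)\cap \Gamma$; then $B_\rho(x)\subset B_{2\rho}(z)$, so $\mu(B_\rho(x))\le c_\mu 2^d\rho^d$. For $\rho$ large, $\mu(B_\rho(x))\le\mu(\Gamma)<\infty$. Hence $\mu(B_\rho(x))\le C\rho^d$ for all $x\in\R^n$ and all $0<\rho\le1$.

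\textbf{Step 2 (dyadic estimate).} Fix $r<1/2$ and $x\in\R^n$, and split $B_r(x)$ into the annuli $A_j:=\{y: 2^{-j-1}r<|x-y|\le 2^{-j}r\}$ for $j\in\N_0$. The singleton $\{x\}$ has $\mu$-measure zero by Step 1, since $d>0$. On $A_j$, $f_n$ is monotone, so $f_n(|x-y|)\le f_n(2^{-j-1}r)$ for $r<1$. Therefore
\[
\int_{B_r(x)} f_n(|x-y|)\,\rd\mu(y)\le \sum_{j\ge0} f_n(2^{-j-1}r)\,C(2^{-j}r)^d .
\]
For $n>2$ the sum equals $C 2^{n-2} r^{d-n+2}\sum_j 2^{-j(d-n+2)}$. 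This converges because $d>n-2$, and it tends to zero as $r\to0$, uniformly in $x$.

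For $n=2$ we have $d>0$, and the sum is bounded by $C r^d\sum_j 2^{-jd}\big(|\log r|+(j+1)\log 2\big)$. This also tends to zero, since $r^d|\log r|\to0$.

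\textbf{Main obstacle.} The only delicate points are the uniformity in $x$ over all of $\R^n$, which Step 1 handles, and the logarithmic case $n=2$, which is handled by $r^d|\log r|\to0$. Both are routine, so I expect no real obstacle: the lemma follows immediately once the uniform ball estimate of Step 1 is in place.
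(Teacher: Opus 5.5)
Your proof is correct and follows essentially the same route as the paper: Step 1 is the paper's footnote extending upper $d$-regularity to arbitrary centres, and Step 2 is the dyadic argument underlying Lemma \ref{lem:ub}, which the paper simply cites after noting that $\mu$ is upper $d$-regular on balls $B_R(0)$. Working directly on $B_r(x)$ has the small advantage of making explicit the localisation to small balls, and hence the uniform decay as $r\to0^+$ rather than mere uniform boundedness of $\int f_n(|x-y|)\,\rd\mu(y)$, which the paper's one-line appeal to Lemma \ref{lem:ub} leaves implicit.
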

\begin{proof}
Apply Lemma \ref{lem:ub} with $\tau=f_n$, if $n> 2$, and with $\tau(r):= f_2(r)$, for $0<r\leq 1$, $\tau(r):=0$, for $r>1$, if $n=2$.
\end{proof}


The following characterisation interprets $\bA f$ as an integral when $f\in L^\infty(\Gamma,\mu)$. Note that, from \eqref{eq:bAdef}, the definition \eqref{eq:Akdef} of $A$, and the comments above Lemma \ref{lem:Aref},
\begin{equation} \label{eq:bArep}
\bA f = \widetilde \Trc_\Gamma A \widetilde \Trc^*_\Gamma f = \widetilde \Trc_\Gamma (\chi \cA \widetilde \Trc^*_\Gamma f), \qquad f\in L^2(\Gamma,\mu).
\end{equation}

\begin{theorem} \label{prop:interep} If $\Gamma$ and $\mu$ satisfy Assumptions \ref{ass:cont} and \ref{ass:int} and $f\in L^\infty(\Gamma,\mu)$, then $\cA\widetilde \Trc_\Gamma^*f\in H^{1,\mathrm{loc}}(\R^n)\cap C(\R^n)$, and the right hand side of \eqref{eq:intrep1} provides a continuous representative of $\cA\widetilde \Trc_\Gamma^*f$, so that
\begin{equation} \label{eq:intrep2}
\bA f(x) =  \int_\Gamma \Phi(x,y)f(y)\, \rd \mu(y), \qquad \mbox{$\mu$-a.e. } x\in \Gamma.
\end{equation}
\end{theorem}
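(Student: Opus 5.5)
Let $F(x):=\int_\Gamma \Phi(x,y)f(y)\,\rd\mu(y)$ for $f\in L^\infty(\Gamma,\mu)$. The plan is to show that $F$ is defined and continuous on all of $\R^n$, that it agrees with $\cA\widetilde\Trc_\Gamma^*f$ almost everywhere, and then to read off \eqref{eq:intrep2} from \eqref{eq:bArep} and \eqref{eq:TrE}.

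\textbf{Step 1: $F$ is finite and continuous.} By \eqref{eq:aint2}, for each $r>0$ the integral $\int_{B_r(x)}|\Phi(x,y)|\,\rd\mu(y)$ is finite and tends to zero uniformly in $x$ as $r\to0^+$. Away from the diagonal, $\Phi(x,y)$ is bounded on compacts and $\mu(\Gamma)<\infty$. Hence $F(x)$ is defined for every $x\in\R^n$, and $|F(x)|\le C\|f\|_{L^\infty(\Gamma,\mu)}$ locally uniformly.

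For continuity, fix $x_0$ and $\epsilon>0$. Choose $r$ so that $\sup_x\int_{B_{2r}(x)}|\Phi(x,y)|\,\rd\mu(y)<\epsilon$. Split $F(x)-F(x_0)$ into the part over $B_{2r}(x_0)$ and the part over its complement.
\begin{itemize}
\item For $|x-x_0|<r$ we have $B_{2r}(x_0)\subset B_{3r}(x)$, so the near part is bounded by $\|f\|_\infty$ times $\int_{B_{3r}(x)}|\Phi(x,\cdot)|\,\rd\mu+\int_{B_{2r}(x_0)}|\Phi(x_0,\cdot)|\,\rd\mu$. After re-choosing $r$, this is at most $2\epsilon\|f\|_\infty$.
\item On the far part, $\Phi$ is uniformly continuous in $(x,y)$ for $|y-x_0|\ge 2r$ and $|x-x_0|<r$, so that difference tends to $0$ as $x\to x_0$.
\end{itemize}
Thus $F\in C(\R^n)$.

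\textbf{Step 2: identifying $F$ with $\cA\widetilde\Trc_\Gamma^*f$.} By \eqref{eq:mapping}, $u:=\cA\widetilde\Trc_\Gamma^*f\in\Hol(\R^n)$, and by Lemma \ref{lem:Aref}, $u=F$ on $\Gamma^c$. The difficulty is that $\Gamma$ may have positive Lebesgue measure (the case $d=n$ is allowed), so we must show $u=F$ a.e.\ on $\Gamma$ as well. I would do this by duality. For $\psi\in C_0^\infty(\R^n)$,
$$\int u\,\psi\,\rd x=\langle\widetilde\Trc_\Gamma^*f,\cA^T\psi\rangle=\int_\Gamma f\,(\cA\psi)|_\Gamma\,\rd\mu,$$
using the symmetry $\Phi(x,y)=\Phi(y,x)$ and the fact that $\cA\psi$ is smooth. (Multiply by a cutoff $\sigma\in C^\infty_{0,\Gamma}$ so that we pair against an $H^1$ function.) Writing $\cA\psi(y)=\int\Phi(y,x)\psi(x)\,\rd x$, Fubini applies because $\int_\Gamma\int|\Phi(x,y)||\psi(x)|\,\rd x\,\rd\mu(y)<\infty$: the $x$-integral is locally bounded in $y$, since $\Phi$ is locally integrable in $x$. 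Fubini then gives $\int F\psi\,\rd x$. Hence $u=F$ as distributions, so $F$ is a continuous representative of $u$ and $u\in H^{1,\mathrm{loc}}(\R^n)\cap C(\R^n)$.

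\textbf{Step 3: conclusion.} By \eqref{eq:bArep}, $\bA f=\widetilde\Trc_\Gamma(\chi u)$. Since $\chi F$ is a continuous, hence quasi-continuous, representative of $\chi u\in H^1(\R^n)$, \eqref{eq:TrE} gives $\bA f=(\chi F)|_\Gamma=F|_\Gamma$ $\mu$-a.e., which is \eqref{eq:intrep2}.

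The main obstacle is Step 1. Continuity on $\Gamma$ itself is where the singularity of $\Phi$ must be controlled, and this is exactly where the uniformity in Assumption \ref{ass:int} is used. The Fubini justification in Step 2 is routine by comparison.
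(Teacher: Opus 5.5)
Your proof is correct, but Step 2 takes a genuinely different route from the paper's.

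\textbf{The paper's Step 2.} The paper splits $\mu=\mu_1+\mu_2$ using the Lebesgue--Radon--Nikodym theorem.
\begin{itemize}
\item For the absolutely continuous part, $\widetilde\Trc^*_{\Gamma,1}f=ff_\mu\in L^1_{\mathrm{comp}}(\R^n)$, so the Newtonian-potential formula \eqref{eq:Newt} applies directly.
\item For the singular part, it invokes Lemma \ref{lem:Aref}, which gives equality only off $\supp(\mu_2)$.
\end{itemize}

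\textbf{Your Step 2.} Your duality-plus-Fubini argument treats $\mu$ in one go. It uses only the local integrability of $\Phi$, $f\in L^\infty(\Gamma,\mu)$ and $\mu(\Gamma)<\infty$.

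It is also the more robust of the two. The paper's route needs $\supp(\mu_2)$ to be Lebesgue-null. This holds for the measures of Theorem \ref{thm:summ}, but it does not follow from mutual singularity alone. For example, $\sum_j 2^{-j}\cH^d|_{K_j}$, with the $K_j$ small similar copies of a fixed $d$-set ($n-2<d<n$) accumulating at every point of a ball, is singular and upper $d$-regular, yet its support contains the ball. Your argument never needs to know where $\mu$ is carried.

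\textbf{Step 1.} The paper writes $F$ as the uniform limit of the continuous functions $F_\varepsilon$ built from a truncated kernel, which uses the monotonicity of $|H^{(1)}_\nu|$. Your near/far splitting is an equally valid direct argument that avoids this.

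\textbf{Two points to tidy up.} First, $\langle\cdot,\cdot\rangle$ is sesquilinear, so the identity you actually need is $\langle\cA\phi,\psi\rangle=\langle\phi,\overline{\sigma\cA\bar\psi}\rangle$ for $\phi\in H^{-1}_\Gamma$, $\psi\in C_0^\infty(\R^n)$ and $\sigma\in C^\infty_{0,\Gamma}$. Second, this identity must be extended from smooth $\phi$ to $\phi\in H^{-1}_\Gamma$. Do this by the same density-and-continuity argument, via \eqref{eq:mapping}, that the paper uses to establish \eqref{eq:Aphi2}.
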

\begin{proof} Suppose $\Gamma$ and $\mu$ satisfy Assumptions \ref{ass:cont} and \ref{ass:int}, so that they also satisfy \eqref{eq:aint2}, and $f\in L^\infty(\Gamma,\mu)$. 

We show first that the right hand side of \eqref{eq:intrep2} is continuous on $\R^n$, arguing in part as in the proof\footnote{There is a small inaccuracy in that proof which we correct. It is not correct that (44) in \cite[Prop.~4.5]{Caetano24} holds with the constant indicated for all $x\in \R^{n+1}$ (it does hold with a larger value for that constant). Cf.~our footnote \ref{foot:8}.} of \cite[Prop.~4.5]{Caetano24} (and cf.~\cite[Lem. 2.18]{Kral}). For $\varepsilon>0$, let  
$$
\Phi_\epsilon(x,y) := \left\{\begin{array}{ll} \Phi(x,y), & |x-y|>\varepsilon,\\ \Phi(0,\varepsilon \hat e), & |x-y|\leq \varepsilon,\end{array}\right.
$$
where $\hat e\in \R^n$ is any fixed unit vector, so that $\Phi_\epsilon \in C(\R^n\times \R^n)$, and let
\begin{equation} \label{eq:Fdef}
F(x) := \int_\Gamma \Phi(x,y)f(y)\, \rd \mu(y), \qquad F_\varepsilon(x) := \int_\Gamma \Phi_\varepsilon(x,y)f(y)\, \rd \mu(y), \qquad x\in \R^n,
\end{equation}
noting that $F(x)$ is well-defined by \eqref{eq:Fdef} for every $x\in \R^n$ by \eqref{eq:aint2}, and that, since $\Phi_\varepsilon$ is continuous, $F_\epsilon \in C(\R^n)$.  
Note that $|\Phi(x,y)| = |\Phi(0,r\hat e)|$, where $r=|x-y|$, and that $|\Phi(0,r\hat e)|$ is a decreasing function of $r$ on $(0,\infty)$, since the same is true for $|H_\nu^{(1)}(r)|$, for $\nu\geq 0$ \cite[\S13.74]{Watson}.  It follows that, for $x\in \R^n$,
 $$
 |F(x)-F_\epsilon(x)| \leq 2 \|f\|_{L^\infty(\Gamma,\mu)}\int_{B_\varepsilon(x)}|\Phi(x,y)|\rd\mu(y) \to 0 \quad \mbox{as} \quad \epsilon\to 0^+,
 $$
 uniformly on $\R^n$, by \eqref{eq:aint2}, so that $F\in C(\R^n)$. We have $\cA\widetilde \Trc_\Gamma^*f\in H^{1,\mathrm{loc}}(\R^n)$ by \eqref{eq:mapping}. It remains to show that $\cA\widetilde \Trc_\Gamma^*f(x)= F(x)$, for a.e.\ $x\in \R^n$, so that $\cA\widetilde \Trc_\Gamma^*f\in H^{1,\mathrm{loc}}(\R^n)\cap C(\R^n)$, and to show that \eqref{eq:intrep2} holds.

By the Lebesgue-Radon-Nikodym theorem (e.g., \cite[Thm.~6.10]{Rudin}), 
$$
\mu = \mu_1+\mu_2,
$$
where $\mu_1$ and $\mu_2$ are positive measures,  the measure $\mu_2$ and $n$-dimensional Lebesgue measure are mutually singular, so that $\Gamma_s :=\supp(\mu_2)\subset \Gamma$ has Lebesgue measure zero, 
and, for some non-negative $f_\mu\in L^1(\R^n)$ with $\supp(f_\mu)\subset \Gamma$,
$$
\mu_1(E) = \int_E f_\mu \, \rd x,
$$
for every measurable $E\subset \R^n$. Since $\mu$ is Radon and satisfies Assumptions \ref{ass:cont} and \ref{ass:int}, the same holds for $\mu_1$ and $\mu_2$, so that $\widetilde \Trc_\Gamma:H^1(\R^n)\to L^2(\Gamma,\mu_j)$ is continuous, for $j=1,2$. It follows that
$$
\widetilde \Trc^*_\Gamma  f= \widetilde \Trc^*_{\Gamma,1} f + \widetilde \Trc^*_{\Gamma,2} f,
$$
where  $\widetilde \Trc^*_{\Gamma,j}:L^2(\Gamma,\mu_j)\to H^{-1}(\R^n)$ denotes the adjoint of $\widetilde \Trc_\Gamma:H^1(\R^n)\to L^2(\Gamma,\mu_j)$, given by \eqref{eq:adj} with $\mu$ replaced by $\mu_j$, $j=1,2$. Let $F_j$ be defined, for $j=1,2$, by \eqref{eq:Fdef} with $F$ and $\mu$ replaced by $F_j$ and $\mu_j$, and note that $F=F_1+F_2$.

Lemma \ref{lem:Aref} implies that $\cA\widetilde \Trc^*_{\Gamma,2}f(x) = F_2(x)$, for a.e.\ $x\in \Gamma_s^c$, so for a.e.\ $x\in \R^n$. Further, it follows from \eqref{eq:adj} that
$$
\langle \widetilde \Trc^*_{\Gamma,1}f,u\rangle = (f,\widetilde \Trc_\Gamma u)_{L^2(\Gamma,\mu_1)} = \int_{\R^n} \bar u f f_\mu  \, \rd x, \qquad u\in \cS(\R^n),
$$
so that $\Trc^*_{\Gamma,1}f =ff_\mu \in L^1_{\mathrm{comp}}(\R^n)$. Thus, by \eqref{eq:Newt}, for a.e.\ $x\in \R^n$,
$$
\cA\widetilde \Trc^*_{\Gamma,1}f(x) = \int_{\R^n} \Phi(x,y)f(y)f_\mu(y) \, \rd y = F_1(x).
$$
Thus $\cA\widetilde \Trc^*_{\Gamma}f(x) = F(x)$ for a.e.\ $x\in \R^n$. Since $F\in C(\R^n)$, and so is a quasi-continuous representative of $\cA\widetilde \Trc^*_{\Gamma}f$, it follows from \eqref{eq:TrE} that $\widetilde \Trc_\Gamma (\chi \cA \widetilde \Trc^*_\Gamma f) = F|_\Gamma$, so that \eqref{eq:intrep2} follows from \eqref{eq:bArep}.
\end{proof}

\begin{remark}[Comparison with known results] \label{rem:kn1}
In the case that $\Gamma$ is a $d$-set and $\mu$ is $\cH^d$, i.e. $d$-dimensional Hausdorff measure, this result and Lemma \ref{lem:Aref} reproduce \cite[Thm.~3.16]{caetano2023integral}. In the classical case that the obstacle $O$ is the closure of a bounded Lipschitz domain and $\Gamma=\partial O$ is its boundary, $\Gamma$ is a $d$-set with $d=n-1$ and $\cH^d$ coincides with  surface measure on $\Gamma$ \cite[Thm.~3.8]{EvansGariepy}, so that \eqref{eq:bArep} is the usual representation for the standard  single-layer potential operator,   e.g., \cite[Eqn.~(7.8)]{mclean2000strongly}, \cite[Thm.~3.3.5]{SaSc:11}.
\end{remark}

\section{Galerkin methods for the integral equation formulation} \label{sec:GM}

In this section we discuss Galerkin methods for the solution of the operator equation \eqref{eq:iemain2}, which can be written equivalently as \eqref{eq:iefinal}, with $\bA$ characterised as an integral operator  in Theorem \ref{prop:interep}. 

\subsection{Galerkin methods and their convergence} \label{sec:GM1}

The standard Galerkin method procedure, applied to solve the operator equation \eqref{eq:iemain2} written in variational form as \eqref{eq:iemainV}, is as follows. To find approximations to the  solution $\phi\in V:= H_\Gamma^{-1}$ of \eqref{eq:iemainV} one chooses a sequence of finite-dimensional subspaces $(V_N)_{N\in \N}\subset V$ and, for each $N$, seeks a {\em Galerkin approximation} $\phi_N\in V_N$ to $\phi$ that satisfies 
\begin{equation} \label{eq:GM}
a(\phi_N,\psi_N) = \langle g,\psi_N \rangle, \qquad \psi_N\in V_N.
\end{equation}
Recalling  from Proposition \ref{prop:ie1} that the solution to the exterior sound-soft scattering problem is $u|_\Omega$, where $u=\cA\phi$, we can compute from $\phi_N$ an approximation to $u$ defined by $u_N:= \cA\phi_N|_\Omega$.
In the case that  \eqref{eq:iemain2} is uniquely solvable we say that {\em the Galerkin method converges as $N\to\infty$} if, for some $N_0\in \N$, \eqref{eq:GM} has a unique solution $\phi_N\in V_N$ for $N\geq N_0$ and $\phi_n\to \phi$ as $N\to \infty$.
Proposition \ref{prop:coer}, that establishes that $a(\cdot,\cdot)$ is continuous and compactly perturbed coercive, together with Theorem \ref{thm:invert} that characterises invertibility, combined with standard convergence theory for the Galerkin method (e.g., \cite[\S4.2.3, \S4.2.5]{SaSc:11}), give the following convergence result (cf.~\cite[Thm.~5.6]{Caetano24}, \cite[Thm.~4.3, 4.4]{caetano2023integral}).

\begin{proposition} \label{prop:GM} 
Suppose $\Gamma$ satisfies  \eqref{eq:GamRest} and $k^2\not\in \SigO$, so that \eqref{eq:iemain2} has a unique solution $\phi\in V$ for every $g\in V_1(\Gamma^c)$. Then the Galerkin method converges as $N\to\infty$  for every $g\in V_1(\Gamma^c)$ if and only if
\begin{equation} \label{eq:ad}
e_N(\psi) := \min_{\psi_N\in V_N} \|\psi-\psi_N\|_{H^{-1}(\R^n)} \to 0 \quad \mbox{as} \quad N\to \infty, \quad \mbox{for all }\psi\in V,
\end{equation} 
in which case also $u_N(x)\to u(x)$ as $N\to\infty$ for every $x\in \Gamma^c$.
Further, if \eqref{eq:ad} holds then, for some constants $C_G>0$ and $N_0\in \N$ independent of $g$ and $x\in \Omega$,
$$
\|\phi-\phi_N\|_{H^{-1}(\R^n)} \leq C_G\, e_N(\phi), \quad |u(x)-u_N(x)| \leq C_G \, e_N(\phi)e_N(\zeta_x), \qquad N\geq N_0, \;\; x\in \Omega,
$$
where $\zeta_x\in H^{-1}_\Gamma$ is the unique solution to \eqref{eq:iemainV} with $\phi$, $g$, replaced by $\zeta_x$, $\chi_x \Phi(x,\cdot)$, respectively, for some $\chi_x\in C_{0,O}^\infty$ with $x\not\in \supp(\chi_x)$.  
\end{proposition}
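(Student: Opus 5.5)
The plan is to derive Proposition \ref{prop:GM} from the abstract theory of Galerkin methods for compactly perturbed coercive forms, e.g.\ \cite[\S4.2.3, \S4.2.5]{SaSc:11}. The hypotheses are provided by Proposition \ref{prop:coer} and Theorem \ref{thm:invert}. Proposition \ref{prop:coer} gives that $a(\cdot,\cdot)$ is continuous and that $a-\tilde a$ is coercive on $V=H^{-1}_\Gamma$, with $\tilde a$ compact. Theorem \ref{thm:invert} gives that $A$ is injective, hence invertible, when $k^2\not\in\SigO$.

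First I would treat necessity. Suppose the Galerkin method converges for every $g\in V_1(\Gamma^c)$. Given $\psi\in V$, set $g:=A\psi$. Then $\phi=\psi$, and $\phi_N\in V_N$ with $\phi_N\to\psi$, so $e_N(\psi)\le\|\psi-\phi_N\|_{H^{-1}(\R^n)}\to0$, which is \eqref{eq:ad}.

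For sufficiency, I would invoke the standard result that if the approximation property \eqref{eq:ad} holds for a sequence of subspaces and the operator is injective and a compact perturbation of a coercive operator, then the discrete inf-sup condition holds uniformly for $N\ge N_0$. This gives unique solvability of \eqref{eq:GM} and quasi-optimality $\|\phi-\phi_N\|_{H^{-1}(\R^n)}\le C_G e_N(\phi)$. The argument, by contradiction using compactness of $\tilde a$ and the coercivity of $a-\tilde a$, is exactly \cite[Thm.~4.2.9]{SaSc:11}. I expect this to be the main step, but it can be cited rather than reproved. Note that \eqref{eq:ad} is stated pointwise and not as a nesting assumption; the cited theory only requires pointwise approximability, via uniform boundedness.

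For the field estimate I would use a duality (Aubin--Nitsche type) argument. Fix $x\in\Omega$ and $\chi_x$ as in the statement. Then by \eqref{eq:Aphi2},
\[
u(x)-u_N(x)=\langle\phi-\phi_N,\overline{\chi_x\Phi(x,\cdot)}\rangle .
\]
Since $\phi-\phi_N\in H^{-1}_\Gamma$, the test function can be replaced by its projection $P$ onto $V_1(\Gamma^c)$. I would define $\zeta_x$ as the solution of the adjoint problem with that data; here one must be careful with the conjugation, and note that the adjoint form is again compactly perturbed coercive and injective, since $\Phi$ is symmetric and the form is transpose-symmetric. Galerkin orthogonality, $a(\phi-\phi_N,\psi_N)=0$ for all $\psi_N\in V_N$, then gives
\[
|u(x)-u_N(x)|=|a(\phi-\phi_N,\zeta_x-\psi_N)|\le C_{\mathrm{cont}}\|\phi-\phi_N\|_{H^{-1}(\R^n)}\,e_N(\zeta_x).
\]
Combining this with quasi-optimality yields the stated bound.

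Pointwise convergence $u_N(x)\to u(x)$ for $x\in\Gamma^c$ then follows directly from $\phi_N\to\phi$ in $H^{-1}(\R^n)$ and \eqref{eq:Aphi2}, using the continuity of $\psi\mapsto\langle\psi,\overline{\chi\Phi(x,\cdot)}\rangle$. The remaining subtlety is that $N_0$ and $C_G$ must be independent of $x$ and $g$. This is automatic, since both come from the discrete inf-sup constant, which depends only on $a$ and $(V_N)$.
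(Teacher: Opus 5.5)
Your proposal follows the paper's route. The paper gives no separate argument: it combines Proposition~\ref{prop:coer}, Theorem~\ref{thm:invert} and the standard theory of \cite[\S4.2.3, \S4.2.5]{SaSc:11}. Your necessity argument (taking $g:=A\psi$), the cited discrete inf-sup/quasi-optimality result, and the duality argument are exactly the standard steps being invoked.

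The one loose end is the conjugation you flag but do not resolve. The statement defines $\zeta_x$ as the solution of the \emph{primal} problem
\[
a(\zeta_x,\psi)=\langle \chi_x\Phi(x,\cdot),\psi\rangle, \qquad \psi\in V .
\]
Your adjoint solution $\zeta^{\mathrm{adj}}$ satisfies $a(\psi,\zeta^{\mathrm{adj}})=\langle \psi,\overline{\chi_x\Phi(x,\cdot)}\rangle$. Since $\Phi(x,y)=\Phi(y,x)$, the form satisfies the transpose symmetry $a(\psi,\zeta)=a(\bar\zeta,\bar\psi)$, and therefore $\zeta^{\mathrm{adj}}=\overline{\zeta_x}$, not $\zeta_x$. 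Your argument thus gives
\[
|u(x)-u_N(x)|\leq C_{\mathrm{cont}}\|\phi-\phi_N\|_{H^{-1}(\R^n)}\,e_N(\overline{\zeta_x}).
\]
Conjugation is an isometry on $H^{-1}_\Gamma$, so $e_N(\overline{\zeta_x})=e_N(\zeta_x)$ whenever $\overline{V_N}=V_N$. This holds for the spaces $\widetilde \Trc^*_\Gamma(\widetilde V_N)$ used in the rest of the paper, because $\widetilde \Trc^*_\Gamma \bar f=\overline{\widetilde \Trc^*_\Gamma f}$ and $\widetilde V_N$ is conjugation-invariant. It is not automatic for an arbitrary sequence $(V_N)$, however. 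You should therefore either add that assumption explicitly or state the bound with $\overline{\zeta_x}$ in place of $\zeta_x$.
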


In the case that Assumption \ref{ass:cont} holds, so that $\widetilde \Trc_\Gamma:H^1(\R^n)\to L^2(\Gamma,\mu)$ and $\widetilde \Trc^*_\Gamma: L^2(\Gamma,\mu)\to H^{-1}(\R^n)$, 
 a way to create an approximation space $V_N$ is to construct a finite-element type approximation space $\widetilde V_N\subset L^2(\Gamma,\mu)$ and set $V_N:=  \widetilde\Trc^*_\Gamma(\widetilde V_N)$. Following \cite{caetano2023integral,Caetano24} we will illustrate this for the simplest case of a piecewise-constant finite element approximation. For $N\in \N$ let $M_N\in \N$ and let $\tau_N=\{T_{j,N}\}_{j=1}^{M_N}$ be a {\em mesh} of $\Gamma$, a collection of $\mu$-measurable subsets of $\Gamma$ (the {\em elements}) such that\footnote{\label{foot:12}We will later use \eqref{eq:mesh}, with $\Gamma$ replaced by $\Gamma'$, as the definition, more generally, of a mesh on some $\mu$-measurable $\Gamma'\subset \Gamma$.}
\begin{equation} \label{eq:mesh}
\mu\left(\Gamma \setminus\bigcup_{j=1}^{M_N} T_{j,N}\right) =0, \quad \mu(T_{j,N})>0 \mbox{ for } j=1,\ldots,M_N, \quad \mbox{and } \mu(T_{j,N}\cap T_{j',N})=0 \mbox{ for } j\neq j',
\end{equation}
and let $h_N:= \max_{j=1,\ldots,M_N} \mathrm{diam}(T_{j,N})$. 
For $N\in \N$ let
\begin{equation} \label{eq:tilVN}
\widetilde V_N := \{f\in L^\infty(\Gamma,\mu): f|_{T_{j,N}}=c_{j} \mbox{ on } T_{j,N}, \mbox{ for some }c_j\in \C, j=1,\ldots,M_N\}\subset L^2(\Gamma,\mu),
\end{equation}
and set $V_N:= \widetilde \Trc^*_\Gamma(\widetilde V_N)$. We assume that the sequence of {\em finite element subspaces} $\widetilde V_N$ is chosen so that $h_N\to 0$ as $N\to\infty$, in which case it follows that
$$
\tilde e_N(f) := \min_{f_N\in \widetilde V_N} \|f-f_N\|_{L^2(\Gamma,\mu)} \to 0 \quad \mbox{as} \quad N\to \infty, \quad \mbox{for all }f\in L^2(\Gamma,\mu).
$$
For certainly this holds for $f\in C(\Gamma)$, since $\Gamma$ is compact, and $C(\Gamma)$ is dense in $L^2(\Gamma,\mu)$ by Lusin's theorem (e.g., \cite[Thm.~2.24]{Rudin}) as $\mu$ is a Radon measure. The following result is thus a corollary of Propositions \ref{prop:GM} and \ref{prop:equiv} (cf.~\cite[Cor.~5.2]{Hinz}).

\begin{corollary} \label{cor:GM}
Suppose that $\Gamma$ satisfies  \eqref{eq:GamRest}, $k^2\not\in \SigO$, $\Gamma$ and $\mu$ satisfy Assumption \ref{ass:cont},  $V_N:= \widetilde \Trc^*_\Gamma(\widetilde V_N)$, for $N\in \N$, with $\widetilde V_N$ given by \eqref{eq:tilVN}, and $h_N\to0$ as $N\to\infty$.  Then \eqref{eq:ad} holds and the Galerkin method converges as $N\to\infty$  for every $g\in V_1(\Gamma^c)$ if and only $\Gamma$ and $\mu$ satisfy Assumption \ref{ass:Gabriel}.
\end{corollary}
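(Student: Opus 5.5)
The plan is to reduce everything to Proposition \ref{prop:GM}. That result says convergence for every $g$ is equivalent to \eqref{eq:ad}, so it suffices to show that \eqref{eq:ad} holds if and only if Assumption \ref{ass:Gabriel} holds. We then characterise \eqref{eq:ad} through density of the range of $\widetilde \Trc^*_\Gamma$ in $H^{-1}_\Gamma$, which is item (vii) of Proposition \ref{prop:equiv}. The statement as written asserts that \eqref{eq:ad} holds and the method converges if and only if Assumption \ref{ass:Gabriel} holds; we read it as: \eqref{eq:ad}, and hence convergence, is equivalent to Assumption \ref{ass:Gabriel}.

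\emph{Sufficiency.} Suppose Assumption \ref{ass:Gabriel} holds. By Proposition \ref{prop:equiv}(vii), $\widetilde \Trc^*_\Gamma(L^2(\Gamma,\mu))$ is dense in $H^{-1}_\Gamma=V$. Given $\psi\in V$ and $\epsilon>0$, choose $f\in L^2(\Gamma,\mu)$ with $\|\psi-\widetilde \Trc^*_\Gamma f\|_{H^{-1}(\R^n)}<\epsilon$. Since $h_N\to0$, the observation preceding the corollary gives $\tilde e_N(f)\to0$. So there are $f_N\in\widetilde V_N$ with $\|f-f_N\|_{L^2(\Gamma,\mu)}\to0$. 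The adjoint $\widetilde \Trc^*_\Gamma$ is bounded with norm at most $C$, the constant of Assumption \ref{ass:cont}. Therefore
\[
e_N(\psi)\le \|\psi-\widetilde \Trc^*_\Gamma f\|_{H^{-1}(\R^n)} + C\,\|f-f_N\|_{L^2(\Gamma,\mu)},
\]
and hence $\limsup_N e_N(\psi)\le\epsilon$. As $\epsilon$ is arbitrary, \eqref{eq:ad} follows, and Proposition \ref{prop:GM} gives convergence.

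\emph{Necessity.} Suppose \eqref{eq:ad} holds, or equivalently, by Proposition \ref{prop:GM}, that the method converges for every $g$. Each $V_N=\widetilde \Trc^*_\Gamma(\widetilde V_N)$ lies in the range of $\widetilde \Trc^*_\Gamma$. So every $\psi\in V$ is a limit of elements of that range, and the range is dense in $H^{-1}_\Gamma$. This is (vii), and Proposition \ref{prop:equiv} yields (i), which is Assumption \ref{ass:Gabriel}.

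The only real content is the density equivalence, and that is already supplied by Proposition \ref{prop:equiv}. The main point needing care is the mild one of confirming that $V_N\subset H^{-1}_\Gamma$, i.e.\ that $\widetilde \Trc^*_\Gamma$ maps into $H^{-1}_\Gamma$. This holds because $\widetilde \Trc^*_\Gamma=\widetilde\Trcr^*_\Gamma$ has range in $H^{-1}_\Gamma$.
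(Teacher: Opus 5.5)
Your proof is correct and is essentially the paper's argument. Using $\tilde e_N(f)\to 0$ and the boundedness of $\widetilde \Trc^*_\Gamma$, condition \eqref{eq:ad} is equivalent to density of the range of $\widetilde \Trc^*_\Gamma$ in $H^{-1}_\Gamma$, which is item (vii) of Proposition \ref{prop:equiv}, and Proposition \ref{prop:GM} then turns \eqref{eq:ad} into convergence. You have written out both directions of the density step that the paper calls ``clear'', and you correctly state density in $H^{-1}_\Gamma$ where the paper's proof writes $H^{-1}(\R^n)$.
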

\begin{proof} Since $h_N\to 0$ implies that $\tilde e_N(f)\to 0$ as $N\to\infty$, it is clear that \eqref{eq:ad} holds if and only if the range of $\widetilde \Trc^*_\Gamma$ is dense in $H^{-1}(\R^n)$ which, by Proposition \ref{prop:equiv}, is equivalent to Assumption \ref{ass:Gabriel}. Thus the corollary follows from Proposition \ref{prop:GM}.
\end{proof} 

Continuing to assume that Assumption \ref{ass:cont} holds, let $\{f_{i,N}\}_{i=1}^{M_N}\subset L^\infty(\Gamma,\mu)$ be a basis for $\widetilde V_N$; one obvious choice, that is an orthonormal basis in $L^2(\Gamma,\mu)$,  is
\begin{equation} \label{eq:basis}
f_{i,N} := \frac{\mathbf{1}_{T_{i,N}}}{(\mu(T_{i,N}))^{1/2}}, \qquad i=1,\ldots,M_N,
\end{equation}
where $\mathbf{1}_{T_{i,N}}$ denotes the characteristic function of $T_{i,N}$. Let $\{e_{i,N} = \widetilde \Trc^*_\Gamma f_{i,N}\}_{i=1}^{M_N}$ denote the corresponding basis for $V_N$. If $\phi_N\in V_N$, then
\begin{equation} \label{eq:phiN}
\phi_N= \sum_{i=1}^{M_N} c_{i,N} e_{i,N},
\end{equation}
for some coefficients $c_{1,N},\ldots,c_{M_N,N}\in \C$, and $\phi_N$ satisfies \eqref{eq:GM} if and only if
\begin{equation} \label{eq:GM2}
\sum_{j=1}^{M_N} a_{i,j}^N c_{j,N} = b_{i,N}, \qquad i=1,\ldots,M_N,
\end{equation}
where
\begin{equation} \label{eq:aij}
a_{i,j}^N := a(e_{j,N},e_{i,N}) = \langle A e_{j,n},e_{i,N}\rangle, \quad b_{i,N} := \langle g,e_{i,N}\rangle, \qquad i,j=1,\ldots,M_N.
\end{equation}
Under certain assumptions these Galerkin method coefficients can be written explicitly as integrals with respect to the measure $\mu$.
\begin{proposition} \label{prop:coeff}
If $\Gamma$ and $\mu$ satisfy Assumption \ref{ass:cont} then, where $\mathfrak{g}$ is given by \eqref{eq:iefinal},
\begin{equation} \label{eq:aij2}
a_{i,j}^N = ( \bA f_{j,n},f_{i,N})_{L^2(\Gamma,\mu)}, \quad b_{i,N} = ( \mathfrak{g},f_{i,N})_{L^2(\Gamma,\mu)}=-\int_{\Gamma} u^{\mathrm{inc}}  \overline{f_{i,n}}\, \rd \mu, \qquad i,j=1,\ldots,M_N,
\end{equation}
 and 
\begin{equation} \label{eq:uNrep}
u_N(x) = \sum_{j=1}^{M_N} c_{j,N} \int_\Gamma \Phi(x,y) f_{j,N}(y)\, \rd \mu(y), \qquad x\in \Omega.
\end{equation}
If also  Assumption \ref{ass:int} holds, then
\begin{equation} \label{eq:aij3}
a_{i,j}^N= \int_{\Gamma} \int_{\Gamma}\Phi(x,y)f_{j,n}(y)\overline{f_{i,n}(x)} \,\rd \mu(y)\rd \mu(x), \qquad i,j=1,\ldots,M_N.
\end{equation}
\end{proposition}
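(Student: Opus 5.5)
The plan is to unwind the definitions of $e_{i,N}$ and $b_{i,N}$ through the adjoint relation \eqref{eq:adj}, and then to use the integral representations of Lemma \ref{lem:Aref} and Theorem \ref{prop:interep}.

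\textbf{The matrix entries.} Since $e_{j,N}=\widetilde \Trc^*_\Gamma f_{j,N}\in H^{-1}_\Gamma$ and $Ae_{j,N}\in V_1(\Gamma^c)\subset H^1(\R^n)$, the duality pairing in \eqref{eq:aij} can be moved onto the $L^2$ side. Using \eqref{eq:adj}, up to conjugating the sesquilinear pairing to match the convention of \eqref{eq:dual},
\[
a_{i,j}^N=\langle Ae_{j,N},\widetilde \Trc^*_\Gamma f_{i,N}\rangle=(\widetilde \Trc_\Gamma A\widetilde\Trc^*_\Gamma f_{j,N},f_{i,N})_{L^2(\Gamma,\mu)}.
\]
By \eqref{eq:bArep}, $\widetilde \Trc_\Gamma A\widetilde \Trc^*_\Gamma=\bA$ on $L^2(\Gamma,\mu)$, which gives the first identity in \eqref{eq:aij2}.

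\textbf{The right-hand side.} The same manipulation gives $b_{i,N}=(\widetilde \Trc_\Gamma g,f_{i,N})_{L^2(\Gamma,\mu)}$. By \eqref{eq:iefinal}, $\widetilde \Trc_\Gamma g=\mathfrak g$. To justify this, recall that $g=-P(\chi u^{\mathrm{inc}})$ and that $(I-P)$ maps into $\widetilde H^1(\Gamma^c)\subset\ker\widetilde \Trc_\Gamma$. Then choose $\chi$ so that $\chi u^{\mathrm{inc}}\in C_0^\infty(\R^n)$, so that $\widetilde \Trc_\Gamma(\chi u^{\mathrm{inc}})=u^{\mathrm{inc}}|_\Gamma$ by \eqref{eq:tr2}. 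Writing the $L^2$ inner product as an integral against $\mu$ gives the stated formula for $b_{i,N}$.

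\textbf{The field $u_N$ and the double integral.} For \eqref{eq:uNrep}, note that $\phi_N=\widetilde \Trc^*_\Gamma\big(\sum_j c_{j,N}f_{j,N}\big)$ and $u_N=\cA\phi_N|_\Omega$. Since $\Omega\subset\Gamma^c$, Lemma \ref{lem:Aref} together with linearity gives \eqref{eq:uNrep} directly. For \eqref{eq:aij3}, observe that $f_{j,N}\in L^\infty(\Gamma,\mu)$. Under Assumption \ref{ass:int}, Theorem \ref{prop:interep} therefore gives $\bA f_{j,N}(x)=\int_\Gamma\Phi(x,y)f_{j,N}(y)\,\rd\mu(y)$ for $\mu$-a.e.\ $x$. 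Substituting this into $(\bA f_{j,N},f_{i,N})_{L^2(\Gamma,\mu)}$ yields the iterated integral.

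The only point needing care is the order of integration, which I take to be the one written in \eqref{eq:aij3} (inner integral in $y$). That iterated integral is exactly what the substitution produces, so no Fubini step is needed. Absolute integrability also follows, since $F$ in the proof of Theorem \ref{prop:interep} is bounded by $\|f\|_\infty\sup_x\int_\Gamma|\Phi(x,y)|\,\rd\mu(y)<\infty$, using \eqref{eq:aint2} and compactness of $\Gamma$. I expect the main bookkeeping obstacle to be matching the conjugation conventions of $\langle\cdot,\cdot\rangle$ in \eqref{eq:dual} with the inner product in \eqref{eq:adj}, so that no spurious conjugate appears in $a_{i,j}^N$ or $b_{i,N}$.
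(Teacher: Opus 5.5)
Your proposal is correct and follows essentially the paper's route. You move the pairing in \eqref{eq:aij} onto $L^2(\Gamma,\mu)$ via \eqref{eq:adj} and identify $\widetilde \Trc_\Gamma A\widetilde\Trc^*_\Gamma$ with $\bA$ via \eqref{eq:bArep}, then invoke Lemma \ref{lem:Aref} for \eqref{eq:uNrep} and Theorem \ref{prop:interep} for \eqref{eq:aij3}; your extra checks are sound details the paper leaves implicit (the conjugate symmetry of the pairing, that $\widetilde\Trc_\Gamma g=-u^{\mathrm{inc}}|_\Gamma$ needs only Assumption \ref{ass:cont}, and the uniform bound on the inner integral).
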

\begin{proof} Equations \eqref{eq:aij2} follow from \eqref{eq:aij},  \eqref{eq:adj}, and \eqref{eq:bArep}. Since $u_N= \cA\phi_N|_\Omega$,  \eqref{eq:uNrep} follows from Lemma \ref{lem:Aref}.
Further, if Assumption \ref{ass:int} holds, then, by Theorem \ref{prop:interep}, $\bA f_{j,n}$ is given explicitly by  \eqref{eq:intrep2}, so that \eqref{eq:aij3} follows.
\end{proof}
\noindent We  discuss the computation of the integrals in the above proposition for particular classes of $\Gamma$, $\mu$, and the mesh $\tau_N$, in Remark \ref{rem:ni}.

From this point on we restrict attention to the case where $\Gamma$ and $\mu$ are as in Theorem \ref{thm:summ}. In that case, as stated in Theorem \ref{thm:summ}, Assumptions \ref{ass:cont} and \ref{ass:Gabriel} are satisfied and, as noted in the proof of that theorem, $\mu$ is upper $d$-regular for some $d\in (n-2,n]$, so that Assumption \ref{ass:int} is satisfied, by Lemma \ref{lem:growth}. Thus the following result follows from Propositions \ref{prop:GM} and \ref{prop:coeff}, Corollary \ref{cor:GM}, and the above discussion.

\begin{corollary} \label{cor:GM2}
Suppose that $\Gamma$ satisfies  \eqref{eq:GamRest}, $k^2\not\in \SigO$, and $\mu$ is as in Theorem \ref{thm:summ}. Then there exists $h>0$ and $C_G>0$ such that, if $\tau_N$ is a mesh with $h_N\leq h$ and  $V_N:= \widetilde \Trc^*_\Gamma(\widetilde V_N)$ with $\widetilde V_N$ given by \eqref{eq:tilVN}, then the Galerkin method equations \eqref{eq:GM} and \eqref{eq:GM2} have exactly one solution and $\|\phi-\phi_N\|_{H^{-1}(\R^n)} \leq C_G\, e_N(\phi)$,  $|u(x)-u_N(x)| \leq C_G \, e_N(\phi)e_N(\zeta_x)$, for $x\in \Omega$, where $\zeta_x$ is as in Proposition \ref{prop:GM}. Further, $e_N(\phi)\to 0$ and $e_N(\zeta_x)\to 0$ as $h_N\to 0$, and the coefficients in \eqref{eq:GM2} are given explicitly as integrals with respect to the measure $\mu$ in \eqref{eq:aij2} and \eqref{eq:aij3}.
\end{corollary}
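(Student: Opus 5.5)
The plan is to assemble the corollary from results already proved, checking that each hypothesis is met in the setting of Theorem \ref{thm:summ}. First, by Theorem \ref{thm:summ}, $\mu$ and $\Gamma=\supp(\mu)$ satisfy Assumptions \ref{ass:cont} and \ref{ass:Gabriel}. As noted in its proof, $\mu$ is upper $d$-regular with $d=\min_j d_j\in(n-2,n]$. Lemma \ref{lem:growth} then gives Assumption \ref{ass:int}.

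The existence/uniqueness and error bounds come next. Suppose, to the contrary of a uniform threshold, that there were meshes $\tau_N$ with $h_N\to 0$ for which the discrete system fails to be uniquely solvable, or for which the quasi-optimality constant blows up. Corollary \ref{cor:GM} gives that \eqref{eq:ad} holds for every sequence with $h_N\to0$. Proposition \ref{prop:GM} then gives unique solvability and the bound $\|\phi-\phi_N\|\le C_G e_N(\phi)$ for $N\ge N_0$, which contradicts this assumption.

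To make this rigorous I will rely on the standard proof underlying Proposition \ref{prop:GM}, namely \cite[\S4.2.3, \S4.2.5]{SaSc:11}. In that proof, the threshold and constant depend only on how well $V_N$ approximates the compact set $\{A^{-*}\tilde a^*\text{-type images}\}$ of the unit ball, i.e.\ on $\sup_{\|\psi\|\le1}e_N(K\psi)$ for a fixed compact operator $K$. I will bound this by a quantity depending only on $h_N$. Precisely, I will show that $\sup_{\psi\in\mathcal K}e_N(\psi)\le\omega(h_N)$ with $\omega(h)\to0$ as $h\to0$, uniformly over meshes with $h_N\le h$, for every compact $\mathcal K\subset H^{-1}_\Gamma$. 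To get this, I use density of $\widetilde\Trc^*_\Gamma(C(\Gamma))$ in $H^{-1}_\Gamma$ (Proposition \ref{prop:equiv}(vii) together with Lusin's theorem). I combine it with the estimate $\|\widetilde\Trc^*_\Gamma(f-f_N)\|_{H^{-1}}\le C\|f-f_N\|_{L^2(\Gamma,\mu)}\le C\mu(\Gamma)^{1/2}\,\mathrm{osc}_{h_N}(f)$ for continuous $f$, where $f_N$ is a piecewise-constant interpolant. A finite $\varepsilon$-net of $\mathcal K$ then gives the uniform bound. This step, making the threshold $h$ uniform in the mesh rather than merely asymptotic along one sequence, is the main obstacle.

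The bound on $|u(x)-u_N(x)|$ and the convergence $e_N(\phi),e_N(\zeta_x)\to0$ as $h_N\to0$ then follow from Proposition \ref{prop:GM} and the same uniform approximation estimate. Finally, the explicit integral formulas for the coefficients follow directly from Proposition \ref{prop:coeff}, since Assumptions \ref{ass:cont} and \ref{ass:int} both hold.
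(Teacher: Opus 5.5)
Your proposal is correct and follows the paper's route. The paper likewise verifies Assumptions \ref{ass:cont} and \ref{ass:Gabriel} via Theorem \ref{thm:summ} and Assumption \ref{ass:int} via Lemma \ref{lem:growth}, then invokes Propositions \ref{prop:GM} and \ref{prop:coeff} and Corollary \ref{cor:GM}, leaving uniformity in the mesh implicit. Your sequential contradiction argument already settles that uniformity rigorously, since Corollary \ref{cor:GM} applies to every mesh sequence with $h_N\to 0$ and the constants in Proposition \ref{prop:GM} are independent of $g$ and $x$; so the $\varepsilon$-net uniform-approximation step you flag as the main obstacle is valid but unnecessary.
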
 

\begin{remark}[Comparison with previous results and with BEM] \label{rem:kn2}
The simplest application of Corollary \ref{cor:GM2} is to the case where $\Gamma$ is a $d$-set and $\mu$ is (a multiple of) $\cH^d$. In that case Corollary \ref{cor:GM2} reproduces \cite[Eqns.~(4.6), (4.7), Thm.~4.3]{caetano2023integral}; in particular, equations (4.6) and (4.7) in \cite{caetano2023integral} coincide with \eqref{eq:aij2} and \eqref{eq:aij3}. A particular instance is the case where $O$ is the closure of a bounded Lipschitz domain and $\Gamma=\partial O$ is its boundary, which is a $d$-set with $d=n-1$. In that case, as noted in Remark \ref{rem:kn1}, $\cH^d$ coincides with  surface measure on $\Gamma$, so that \eqref{eq:GM2}, \eqref{eq:aij2}, and  \eqref{eq:aij3} are the equations defining a classical piecewise-constant Galerkin boundary element method (BEM) applied to the equation $S\phi=-u^{\mathrm{inc}}|_\Gamma$, where $S$ is the standard (acoustic) single-layer boundary integral operator.
\end{remark}

\subsection{The case that $\intt(O)$ is an $n$-set} \label{sec:onset}
As a more sophisticated application of Corollary \ref{cor:GM2}, consider the case that the interior of the obstacle,  $\intt(O)$, is a non-empty open $n$-set. By this we mean that  $\intt(O)$ satisfies the so-called {\em measure density condition} (e.g., \cite{hajlasz_sobolev_2008}), that, for some constant $c>0$,
$$
|\intt(O)\cap B(x,r)| \geq cr^n, \qquad x\in \intt(O),
$$
in which case (see \cite[Prop.~1 on p.~205]{JW84}) $F:= \overline{\intt(O)}$ is an $n$-set and $|\partial F|=0$. Suppose also that
the  boundary\footnote{This analysis extends in an obvious way, with a straightforward extension of Corollary \ref{cor:GM3}, to the case that $\partial O$ is the union of finitely many compact sets $\gamma_i$, with each $\gamma_i$ a $d_i$-set, for some $d_i\in (n-2,n]$, and $\mu(\gamma_i\cap\gamma_j)=0$, for $i\neq j$.} $\partial O$ is a $d$-set for some $d\in [n-1,n)$. For example, this is the case, with $F=O$ and $d=n-1$, if $O$ is the closure of a bounded Lipschitz domain. It is also the case, with $F=O$, if $O$ is a member of the family of  ``classical snowflakes'', that includes the standard Koch snowflake,  which  is studied in \cite[\S5.1]{CaetanoJFA}. For if $O$ is such a snowflake then $\partial O$ is a $d$-set with $d\in (1,2)$, by \cite[Prop.~5.3, Rem.~5.4]{CaetanoJFA}, and $\intt(O)$ is a non-empty open $n$-set as shown in \cite[\S5.3]{CaetanoJFA} (our  open $n$-set is termed an {\em interior regular domain} in \cite{CaetanoJFA}). An example where $F\neq O$  is the obstacle $O=B_1(0)\cup\{(x_1,0):|x_1|\leq 2\}$, for which $F=B_1(0)$ and $\partial O$ is a $d$-set with $d=n-1$.

Set $\Gamma=O$ (so that $\Omega_-=\SigO=\emptyset$), and, for some constants $a_n$, $a_d>0$, set
\begin{equation} \label{eq:muo}
\mu := a_d\cH^d|_{\partial O} + a_n \cH^n|_F,
\end{equation}
so that $\supp(\mu)=F\cup \partial O = \Gamma$. 
Where $h=(h_a,h_n)$, with $h_a$, $h_n>0$, construct a mesh $\tau_h$ on $\Gamma$ that consists of a mesh $\tau_h^n:= \{T^n_j\}_{j=1}^{N_n}$ on $\intt(O)$ in the sense of \eqref{eq:mesh}  and footnote \ref{foot:12}, with $\mathrm{diam}(T^n_j)\leq h_n$, $j=1,\ldots,N_n$ (note that $\mu=a_n \cH^n$ on $\intt(O)$), and a mesh $\tau_h^d:=\{T^d_j\}_{j=1}^{N_d}$ on $\partial O$, with  $\mathrm{diam}(T^d_j)\leq h_d$, $j=1,\ldots,N_d$ (note that $\mu=a_d\cH^d$ on $\partial O$).  The union of these two meshes is $\tau_h := \{T^n_j\}_{j=1}^{N_n}\cup \{T^d_j\}_{j=1}^{N_d}$. Let (cf.~\eqref{eq:tilVN})
\begin{eqnarray} \label{eq:Vh}
\widetilde V_h &:=& \{f\in L^\infty(\Gamma,\mu): f|_T \mbox{ is constant $\mu$-a.e. on $T$, for $T\in \tau_h$}\},\\ \label{eq:Vhd}
\widetilde V_h^d &:=& \{f\in L^\infty(\partial O,\mu): f|_T \mbox{ is constant $\mu$-a.e. on $T$, for $T\in \tau^d_h$}\}\\ \nonumber
&=& \{f\in \widetilde V_h: f=0 \mbox{ on } \intt(O)\},
\end{eqnarray}
and let $V_h:= \widetilde \Trc^*_\Gamma(\widetilde V_h)$ and $V^d_h:= \widetilde \Trc^*_\Gamma(\widetilde V^d_h)$. With these notations and definitions the following result is a consequence of Corollary \ref{cor:GM2}. The point of this result is that, as long as the meshes on $\intt(O)$ and $\partial O$ are sufficiently refined ($h_n$ and $h_d$ are both $\leq h_0$), convergence of the Galerkin method is obtained, for all $k>0$, by refining only the mesh on $\partial O$; see Remark \ref{remkn3} for further discussion.

\begin{corollary} \label{cor:GM3}
Suppose that $O$, $\Gamma$, $V_h$, and $V_h^d$ are as defined above. Then, for every $k>0$ there exists $h_0>0$ and $C_G>0$ such that, if $h_d\leq h_0$ and $h_n\leq h_0$, then the Galerkin method equation
$$
a(\phi_h,\psi_h)=\langle g,\psi_h\rangle, \qquad \psi_h\in V_h,
$$
has a unique solution $\phi_h\in V_h$ that satisfies
\begin{equation} \label{eq:ee}
\|\phi-\phi_h\|_{H^{-1}(\R^n)} \leq C_G\, e_{h_d}(\phi) \quad \mbox{and} \quad |u(x)-u_h(x)| \leq C_G \, e_{h_d}(\phi)e_{h_d}(\zeta_x), \quad \mbox{ for } x\in \Omega, 
\end{equation}
where $u_h:= \cA\phi_h|_\Omega$,  $\zeta_x$ is as in Proposition \ref{prop:GM}, and
$$
e_{h_d}(\psi) := \min_{\psi_h\in V_h^d} \|\psi-\psi_h\|_{H^{-1}(\R^n)}, \qquad \psi\in H^{-1}_\Gamma.
$$
Further, $e_{h_d}(\phi)\to 0$ and $e_{h_d}(\zeta_x)\to 0$, for every $x\in \Omega$, as $h_d\to 0$.
\end{corollary}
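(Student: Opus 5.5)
The plan is to deduce the result from Corollary \ref{cor:GM2}, applied with $\Gamma=O$ and $\mu$ given by \eqref{eq:muo}, and then to pass from best approximation in $V_h$ to best approximation in the smaller space $V_h^d$ by using the fact that the solutions live on $\partial O$.

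\textbf{Step 1: the setting fits Corollary \ref{cor:GM2}.}
\begin{itemize}
\item Since $F\subset O$ and $O=\intt(O)\cup\partial O\subset F\cup\partial O$, we have $\Gamma=O=F\cup\partial O$.
\item Thus $\mu$ is of the form \eqref{eq:musum} with $J=2$, $\Gamma_1=\partial O$ (a $d$-set, $d\in[n-1,n)$) and $\Gamma_2=F$ (an $n$-set). So Theorem \ref{thm:summ} gives Assumptions \ref{ass:cont} and \ref{ass:Gabriel}.
\item $\Gamma=O$ satisfies \eqref{eq:GamRest}, and $\Omega_-=\emptyset$, so $k^2\notin\SigO$ for every $k>0$.
\item $\tau_h$ is a mesh of $\Gamma$ in the sense of \eqref{eq:mesh}, since $\mu(\partial F)=0$ forces $\mu(F\setminus\intt(O))=0$ apart from $\partial O$. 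Its maximal element diameter is $\max(h_n,h_d)$.
\end{itemize}
Corollary \ref{cor:GM2} then gives $h_0>0$ and $C_G$ such that, for $h_n,h_d\le h_0$, the Galerkin equation has a unique solution $\phi_h\in V_h$. It also gives $\|\phi-\phi_h\|_{H^{-1}(\R^n)}\le C_G\,e_h(\phi)$ and $|u(x)-u_h(x)|\le C_G\,e_h(\phi)e_h(\zeta_x)$, where $e_h(\psi):=\min_{\psi_h\in V_h}\|\psi-\psi_h\|_{H^{-1}(\R^n)}$.

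\textbf{Step 2: uniformity of $h_0$ and $C_G$ over meshes.} Corollary \ref{cor:GM2} is phrased uniformly over meshes, but if it is read as only sequential I would re-derive the uniformity.
\begin{itemize}
\item The standard theory for compactly perturbed coercive forms (Proposition \ref{prop:coer}; e.g.\ \cite[\S4.2.3]{SaSc:11}) gives discrete inf-sup stability and quasi-optimality with uniform constants. This holds once $\sup_{\psi\in K}\min_{\psi_h\in V_h}\|\psi-\psi_h\|$ is small, where $K$ is a fixed relatively compact subset of $H^{-1}_\Gamma$ built from the compact part and $A^{-1}$.
\item Pointwise, for each $\psi$ this quantity is small uniformly over all meshes of diameter $\le h$. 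To see this, approximate $\psi$ by $\widetilde\Trc^*_\Gamma f$ with $f\in C(\Gamma)$, which is possible by density, Proposition \ref{prop:equiv}(vii). Then approximate $f$ by its piecewise-constant interpolant, with error bounded by $\mu(\Gamma)^{1/2}$ times the modulus of continuity of $f$ at $h$, independently of the particular mesh.
\item Total boundedness of $K$ upgrades this pointwise statement to uniform smallness on $K$.
\end{itemize}

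\textbf{Step 3: reduction to $V_h^d$.} Since $\widetilde V_h^d\subset\widetilde V_h$, we have $V_h^d\subset V_h$, hence $e_h(\psi)\le e_{h_d}(\psi)$, and \eqref{eq:ee} follows from Step 1.

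\textbf{Step 4: $e_{h_d}(\phi)\to0$ and $e_{h_d}(\zeta_x)\to0$.} This is the main point.
\begin{itemize}
\item By Corollary \ref{cor:IEfinal} (unique solvability, as $\Omega_-=\emptyset$), $\phi\in H^{-1}_{\partial O}$.
\item $\zeta_x$ solves \eqref{eq:iemain2} with right-hand side $P(\chi_x\Phi(x,\cdot))$, which is $-P(\chi_x u^{\mathrm{inc}})$ for the incident field $u^{\mathrm{inc}}=-\Phi(x,\cdot)$. Since $x\in\Omega$, this field satisfies the Helmholtz equation in a neighbourhood of $O$. Hence, again by Corollaries \ref{cor:exists} and \ref{cor:IEfinal}, $\zeta_x\in H^{-1}_{\partial O}$.
\item Apply Theorem \ref{thm:summ} with $J=1$ to $\partial O$ and $\mu_d:=a_d\cH^d|_{\partial O}$. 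Assumptions \ref{ass:cont} and \ref{ass:Gabriel} hold for this pair, so Proposition \ref{prop:equiv}(vii) says $\widetilde\Trc^*_{\partial O}(L^2(\partial O,\mu_d))$ is dense in $H^{-1}_{\partial O}$.
\item For $f\in L^2(\Gamma,\mu)$ vanishing on $\intt(O)$, \eqref{eq:adj} gives $\widetilde\Trc^*_\Gamma f=\widetilde\Trc^*_{\partial O}(f|_{\partial O})$. Hence $V_h^d=\widetilde\Trc^*_{\partial O}(\text{piecewise constants on }\tau_h^d)$.
\item The argument of Corollary \ref{cor:GM} (piecewise constants approximate $L^2(\partial O,\mu_d)$ as $h_d\to0$, plus boundedness of $\widetilde\Trc^*_{\partial O}$) then yields $e_{h_d}(\psi)\to0$ for every $\psi\in H^{-1}_{\partial O}$, in particular for $\phi$ and $\zeta_x$.
\end{itemize}

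The step I expect to be most delicate is Step 2, making $h_0$ uniform over arbitrary mesh pairs rather than along a sequence; identifying $\zeta_x$ as supported on $\partial O$ is the key conceptual step.
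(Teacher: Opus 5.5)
Your proposal is correct and matches the paper's proof: the first part is Corollary~\ref{cor:GM2} combined with $V_h^d\subset V_h$, and your Step~2 is unnecessary because that corollary is already uniform over all meshes with $h_N\le h$. The second part uses $\phi,\zeta_x\in H^{-1}_{\partial O}$ from Corollary~\ref{cor:IEfinal} together with $e_{h_d}(\psi)\to 0$ for every $\psi\in H^{-1}_{\partial O}$.

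Two small corrections.
\begin{itemize}
\item The auxiliary incident field should be $-\chi_x\Phi(x,\cdot)\in H^1(\R^n)$, as in the paper, not $-\Phi(x,\cdot)$, which is not in $H^{1,\mathrm{loc}}(\R^n)$ because of its singularity at $x\in\Omega$.
\item In Step~1 the fact recorded in the paper is $|\partial F|=0$. Your claim $\mu(\partial F)=0$ is false when $\partial F$ carries $\cH^d$-mass. In any case the mesh property needs neither, since $\intt(O)$ and $\partial O$ partition $\Gamma$.
\end{itemize}
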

\begin{proof}
The first sentence is immediate from Corollary \ref{cor:GM2}. The second  sentence follows since $e_{h_d}(\psi)\to 0$ as $h\to 0$ for every $\psi\in H^{-1}_{\partial O}$, and $\phi, \zeta_x\in H^{-1}_{\partial O}$ by Corollary \ref{cor:IEfinal}. (Note that, provided we choose, as in the definition of $\zeta_x$ in Proposition \ref{prop:GM}, $\chi_x\in C_{0,O}^\infty$ so that $x\not\in \supp(\chi_x)$, it follows that  $\zeta_x$ is the solution of \eqref{eq:iemain2} with $u^{\mathrm{inc}}:= -\chi_x \Phi(x,\cdot)\in H^1(\R^n)$.)
\end{proof}

\begin{remark}\label{remkn3} 
For the case considered in Corollary \ref{cor:GM3}, an alternative integral equation formulation and Galerkin method is to take $\Gamma=\partial O$, so that $\Gamma$ is a $d$-set, and $\mu=\cH^d$, discretising $\Gamma$ with the approximation space $V_h^d$. (In the case that $\Omega$ is Lipschitz so that $d=n-1$, this is a standard Galerkin BEM as noted in Remark \ref{rem:kn2}.) This leads to a Galerkin method approximation $\phi_h^d$ which, by Corollary \ref{cor:GM2} (or \cite[Thm.~4.3]{caetano2023integral}), satisfies the same error estimate \eqref{eq:ee} (possibly with different constants),  using a smaller approximation space (the space $V_h^d$ has dimension $N_d$ while $V_h$ has dimension $N_d+N_n$). But, with the choice $\Gamma = \partial O$, the integral equation is uniquely solvable and the error estimate \eqref{eq:ee} holds  {\em only for} $k^2\not \in \sigma(-\Delta_D(\mathrm{int}(O)))$, and the (infinite) set of eigenvalues $\sigma(-\Delta_D(\mathrm{int}(O)))$ is, in general, unknown. In contrast, Corollary \ref{cor:GM3} guarantees convergence   for all $k>0$ using the Galerkin method based on the space $V_h$. Moreover, Corollary \ref{cor:GM3} shows that convergence is achieved with this approximation space by taking $N_d\to\infty$ with $N_n$ fixed (but sufficiently large), in which regime $V_h^d$ and $V_h$ have asymptotically the same dimension. Recall also our discussion of the choice of $\Gamma$, and its implications for the solvability of the integral equation, in Remark \ref{rem:choice}.
\end{remark}

\subsection{Solution regularity} \label{sec:reg}

In this section, as a precursor to proving convergence rates for our Galerkin methods, we establish regularity results for the solution of \eqref{eq:iemain2}. These results apply when $\Gamma$ and $\mu$ are given  by \eqref{eq:musum}, together  with additional assumptions, including  that
\begin{equation} \label{eq:reg}
\Gamma_i\cap \Gamma_j = \emptyset, \qquad i,j=1,\ldots,J, \quad i\neq j.
\end{equation}
The regularity results that we derive depend on earlier regularity results from \cite{Caetano24,caetano2023integral} for the special case that $\Gamma$ is a $d$-set, for some  $d\in (n-2,n]$, and $\mu=\cH^d$ (in other words, the case that $\Gamma$ and $\mu$ are given  by \eqref{eq:musum} with $J=1$). 

Suppose first of all that $\Gamma$ is a $d$-set, for some  $d\in (n-2,n]$, and $\mu=\cH^d$. Then (see \cite[Thm.~1]{Jonsson79}, \cite[Thm.~18.6]{Triebel97FracSpec}, or \cite[\S2.4]{Caetano24}), for $s>(n-d)/2$ the trace operator  $\widetilde \Trc_\Gamma:\cS(\R^n)\to L^2(\Gamma,\mu)$ has a unique extension that is a continuous linear operator $H^s(\R^n)\to L^2(\Gamma,\cH^d)$. Varying slightly the notation\footnote{In \cite{Caetano24,caetano2023integral} our $\bH^t(\Gamma,\cH^d)$ is denoted more briefly as $\bH^t(\Gamma)$.} of \cite{Caetano24,caetano2023integral} we denote the range of this operator by $\bH^t(\Gamma,\cH^d):= \widetilde \Trc_\Gamma(H^s(\R^n))$, where
$$
t:=s-\frac{n-d}{2},
$$
and equip  $\bH^t(\Gamma,\cH^d)$ with the quotient norm 
\begin{equation} \label{eq:Hsnorm}
\|f\|_{\bH^t(\Gamma,\cH^d)} := \min \{ \|v\|_{H^s(\R^n)} : v\in H^s(\R^n) \mbox{ and } \widetilde \Trc_\Gamma v=f\},
\end{equation}
with which it is a Hilbert space unitarily isomorphic to the quotient space $H^s(\R^n)/K_s$, where $K_s$ denotes the kernel of $\widetilde \Trc_\Gamma:H^s(\R^n)\to L^2(\Gamma,\cH^d)$. This defines $\bH^{t}(\Gamma,\cH^d)$ for $t>0$. For $t<0$ we set $\bH^{t}(\Gamma,\cH^d):= (\bH^{-t}(\Gamma,\cH^d))^*$. Identifying $L^2(\Gamma,\cH^d)$ with its dual and setting $\bH^0(\Gamma,\cH^d):= L^2(\Gamma,\cH^d)$, we have that  $\bH^{t}(\Gamma,\cH^d)\subset \bH^{t'}(\Gamma,\cH^d)$ for $t>t'$, with continuous and dense embedding. 
Further, where
\begin{equation} \label{eq:td}
t^d := 1-\frac{n-d}{2},
\end{equation}
$\bH^{t^d}(\Gamma,\cH^d)$ is the space $\bH(\Gamma,\mu)$ defined by \eqref{eq:Hdef}, \eqref{eq:Hsnorm} coincides with \eqref{eq:Hnorm}, and $\bH^{-t^d}(\Gamma,\cH^d)=\bH^*(\Gamma,\mu)$.

In \cite[\S3(b)]{caetano2023integral} it is noted that, in the case we are considering that $\Gamma$ is a $d$-set and $\mu=\cH^d$,  the definition of the operator $\bA:\bH^{-t^d}(\Gamma,\cH^d)\to \bH^{t^d}(\Gamma,\cH^d)$, defined by \eqref{eq:bAdef}, can be uniquely extended to the larger domain $\bH^{t-t^d}(\Gamma,\cH^d)$, for $-t^d<t<0$ (indeed also to $t=-t^d$ if $d=n$), with the following mapping properties.

\begin{proposition}\label{prop:cont}(\cite[Prop.~3.18]{caetano2023integral}) Suppose that $\Gamma$ is a $d$-set, for some  $d\in (n-2,n]$, and $\mu=\cH^d$. Then $\bA:\bH^{t-t^d}(\Gamma,\cH^d)\to \bH^{t+t^d}(\Gamma,\cH^d)$ and is continuous, for $|t|<t^d$, indeed for $|t|\leq 1$ when $d=n$.
\end{proposition}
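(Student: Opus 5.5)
We reduce the mapping property of $\bA$ to the mapping property \eqref{eq:mapping} of the Newtonian potential, using the Sobolev-space characterisation of $\bH^{t}(\Gamma,\cH^d)$ through traces and adjoints of traces. Fix $t$ with $|t|<t^d$ and set $s:=t+1$, so that $s-\frac{n-d}{2}=t+t^d$ and $s>(n-d)/2$. By the trace theorem recalled above, $\widetilde \Trc_\Gamma:H^{s}(\R^n)\to \bH^{t+t^d}(\Gamma,\cH^d)$ is continuous (indeed a surjective quotient map). By duality, the adjoint $\widetilde \Trc_\Gamma^*$ maps $\bH^{-(t'+t^d)}(\Gamma,\cH^d)=(\bH^{t'+t^d}(\Gamma,\cH^d))^*$ continuously into $H^{-s'}_\Gamma\subset H^{-s'}(\R^n)$, where $s'=t'+1$. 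Applying this with $t'=-t$ gives, for the domain space $\bH^{t-t^d}(\Gamma,\cH^d)=\bH^{-(-t+t^d)}(\Gamma,\cH^d)$, a continuous map $\widetilde \Trc_\Gamma^*:\bH^{t-t^d}(\Gamma,\cH^d)\to H^{t-1}_\Gamma$, since $s'=1-t$ and $-s'=t-1$. This requires $-t>-t^d$, i.e.\ $t<t^d$.

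Next we use the representation \eqref{eq:bArep}, $\bA f=\widetilde \Trc_\Gamma(\chi\cA\widetilde \Trc^*_\Gamma f)$, valid for $f\in L^2(\Gamma,\cH^d)$. By \eqref{eq:mapping} with exponent $t-1$, $\cA:H^{t-1}_{\mathrm{comp}}(\R^n)\to H^{t+1,\mathrm{loc}}(\R^n)$ is continuous. Multiplication by $\chi$ then maps continuously into $H^{t+1}(\R^n)=H^s(\R^n)$. Composing with the trace gives a continuous map into $\bH^{t+t^d}(\Gamma,\cH^d)$, which requires $s=t+1>(n-d)/2$, i.e.\ $t>-t^d$.

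The composition $\widetilde \Trc_\Gamma\chi\cA\widetilde \Trc_\Gamma^*$ is therefore bounded $\bH^{t-t^d}\to\bH^{t+t^d}$. It agrees with $\bA$ on $L^2(\Gamma,\cH^d)$, which is dense in $\bH^{t-t^d}$ for $t\le 0$. For $t>0$, $\bH^{t-t^d}$ is contained in $\bH^{-t^d}$, where $\bA$ is already defined, and the two definitions coincide by density of $L^2$ in $\bH^{-t^d}$ and continuity into the larger space. So the unique continuous extension is the asserted operator.

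The main obstacle is the case $d=n$ with $|t|=1$, where one endpoint condition fails. For $t=-1$, $s=0$, and the "trace" onto $\Gamma$ is restriction $L^2(\R^n)\to L^2(\Gamma)$, with $\bH^0=L^2(\Gamma,\cH^n)$. This case is fine if we interpret $\bH^{t}$ for an $n$-set as $H^s(\R^n)$ modulo functions vanishing on $\Gamma$. The adjoint then maps $L^2(\Gamma)^*$ to $H^0_\Gamma$ by extension by zero, so both endpoint conditions $s\ge 0$ and $-s'\le 0$ hold with equality. We therefore check that, for $d=n$, the trace theorem holds for all $s\ge0$ rather than $s>0$. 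This follows because $\cH^n$ is a multiple of Lebesgue measure, so $\widetilde \Trc_\Gamma$ is just restriction, which is bounded on $L^2$. With that in hand, the argument above runs verbatim for $|t|\le 1$.
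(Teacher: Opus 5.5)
Your proposal is correct and is essentially the argument behind the cited result \cite[Prop.~3.18]{caetano2023integral}, which the paper quotes without proof: factor $\bA=\widetilde\Trc_\Gamma\chi\cA\widetilde\Trc_\Gamma^*$ as in \eqref{eq:bArep}, combine the boundedness of $\widetilde\Trc_\Gamma:H^{t+1}(\R^n)\to\bH^{t+t^d}(\Gamma,\cH^d)$ (needing $t>-t^d$) and of its adjoint $\bH^{t-t^d}(\Gamma,\cH^d)\to H^{t-1}_\Gamma$ (needing $t<t^d$) with \eqref{eq:mapping}, and identify the result with $\bA$ via density of $L^2(\Gamma,\cH^d)$. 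There is one small slip in the $d=n$ paragraph, where you mix up the two endpoints: at $t=-1$ the critical map is the trace at $s=0$, while the adjoint maps $\bH^{-2}$ into $H^{-2}_\Gamma$; at $t=1$ the critical map is the adjoint $L^2(\Gamma)\to H^0_\Gamma$ (extension by zero), and both are covered by your concluding observation that for $d=n$ the trace is bounded for all $s\ge 0$.
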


\noindent
Building on the above,  we have also invertibility for a range of $t$ in the case that $\Gamma$ satisfies the following assumption that uses terminology introduced in \S\ref{sec:meas}.
\begin{assumption} \label{ass:ifs}
$\Gamma$ is the attractor of an IFS satisfying the OSC with $d:=\dim_H(\Gamma)\in (n-2,n]$ and either: (a) $\Gamma$ is disjoint; or (b) $d\in \N$  and $\Gamma$ is contained in a $d$-dimensional affine subspace of $\R^n$. 
\end{assumption}
\noindent Recall that if $\Gamma$ satisfies this assumption then $\Gamma$ is a $d$-set, with $d=\dim_H(\Gamma)$, and  note that condition (b) includes the case $d=n$ when $\Gamma$ is an $n$-set. 

The following proposition is a consequence of: Proposition \ref{prop:cont}; Corollary \ref{cor:iefinal} (which proves the claimed invertibility in the case $t=0$); a result on interpolation of invertibility of operators (\cite[Prop.~4.7]{MitTay99}, which quotes \cite{Snei74}); the results that, if Assumption \ref{ass:ifs} holds, then $\{\bH^t(\Gamma,\cH^d)\}_{|t|<1}$ is an interpolation scale for $d<n$ \cite[Cor.~3.3]{Caetano24}, while $\{\bH^t(\Gamma,\cH^d)\}_{t\geq 0}$  and $\{\bH^t(\Gamma,\cH^d)\}_{t\leq 0}$  are interpolation scales if $d=n$ \cite[\S1]{Paperdn}.  

\begin{proposition} \label{prop:invert}(\cite[Prop.~3.19]{caetano2023integral}) 
Suppose that $\Gamma$ satisfies \eqref{eq:GamRest} and Assumption \ref{ass:ifs}, $\mu=\cH^d$, and $k^2\not\in \SigO$. Then, for some $\varepsilon\in (0,t^{d'})$, where $d':= \dim_H(\partial O)$, $\bA:\bH^{t-t^d}(\Gamma,\cH^d)\to \bH^{t+t^d}(\Gamma,\cH^d)$ is invertible, for $|t|\leq\varepsilon$.
\end{proposition}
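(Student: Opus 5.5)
The plan is to combine Proposition \ref{prop:cont}, which gives boundedness of $\bA:\bH^{t-t^d}(\Gamma,\cH^d)\to \bH^{t+t^d}(\Gamma,\cH^d)$ for $|t|<t^d$ (or $|t|\le 1$ if $d=n$), with invertibility at $t=0$ and a stability-of-invertibility result for interpolation scales. First, at $t=0$ the operator is $\bA:\bH^{-t^d}(\Gamma,\cH^d)=\bH^*(\Gamma,\mu)\to \bH^{t^d}(\Gamma,\cH^d)=\bH(\Gamma,\mu)$. Under Assumption \ref{ass:ifs}, $\Gamma$ is a $d$-set with $d\in(n-2,n]$. Hence Assumption \ref{ass:cont} holds by Theorem \ref{thm:Bie}, and Assumption \ref{ass:Gabriel} holds by Theorem \ref{thm:summ} with $J=1$. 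Corollary \ref{cor:iefinal} then shows that $\bA$ is invertible at $t=0$, since $k^2\notin\SigO$.

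Next, I would set up the two scales $X_t:=\bH^{t-t^d}(\Gamma,\cH^d)$ and $Y_t:=\bH^{t+t^d}(\Gamma,\cH^d)$. For $d<n$, fix $t_0$ with $0<t_0<t^d$. By \cite[Cor.~3.3]{Caetano24}, $\{\bH^s\}_{|s|<1}$ is an interpolation scale. Since $|t\pm t^d|<1$ for $|t|\le t_0$, both $\{X_t\}_{|t|\le t_0}$ and $\{Y_t\}_{|t|\le t_0}$ are interpolation scales, on which $\bA$ acts boundedly and consistently by Proposition \ref{prop:cont}. The extension of $\bA$ is unique by density, so the operators agree on intersections.

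The result of Sneiberg type \cite[Prop.~4.7]{MitTay99} then states that, for operators bounded on complex interpolation scales, the set of parameters where the operator is invertible is open. Since $0$ lies in that set, invertibility holds for $|t|\le\varepsilon$ with some $\varepsilon>0$, and we may shrink $\varepsilon$ so that $\varepsilon<t^{d'}$.

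The case $d=n$ is the main obstacle. There $t^d=1$, so $X_t$ has index $t-1\le 0$ while $Y_t$ has index $t+1\ge 0$ for $|t|\le1$. By \cite[\S1]{Paperdn}, the non-positive and non-negative parts are each interpolation scales, but the full range $[-1,1]$ need not be. This is acceptable because Sneiberg's lemma only needs the domain family and the target family to be interpolation scales separately. Here the domain family lies in $\{\bH^s\}_{s\le0}$ and the target family in $\{\bH^s\}_{s\ge0}$, and Proposition \ref{prop:cont} gives boundedness for $|t|\le1$. The care needed is in checking that the interpolation identifications (complex interpolation with matching parameters) match the hypotheses of \cite[Prop.~4.7]{MitTay99}. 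Once that is done, the same openness argument gives $\varepsilon$.
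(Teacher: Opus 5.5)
Your proposal follows the paper's route: invertibility at $t=0$ from Corollary \ref{cor:iefinal} (with Assumptions \ref{ass:cont} and \ref{ass:Gabriel} supplied by Theorem \ref{thm:summ} with $J=1$), boundedness from Proposition \ref{prop:cont}, and the Sneiberg-type stability result \cite[Prop.~4.7]{MitTay99} applied on the interpolation scales of \cite[Cor.~3.3]{Caetano24} for $d<n$ and of \cite[\S1]{Paperdn} for $d=n$. There is one small slip: $0<t_0<t^d$ does not guarantee $|t\pm t^d|<1$ for $|t|\le t_0$ when $d>n-1$, since then $2t^d=d-n+2>1$; choosing $t_0<\min\{t^d,1-t^d\}$ fixes this at no cost, because only a neighbourhood of $t=0$ is needed.
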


\noindent Under the conditions of the above proposition, $d'\leq d$, since $\partial O\subset \Gamma$. 
If $d'<d$, which is necessarily the case if $d=n$ by \cite[Prop.~2.5]{Paperdn}, then
 $\bA:\bH^{t-t^d}(\Gamma,\cH^d)\to \bH^{t+t^d}(\Gamma,\cH^d)$ is continuous for $t\in [-t^d,t^{d}]$, by Proposition \ref{prop:cont}, but cannot be invertible for any $t> t^{d'}$ (indeed any $t\geq t^{d'}$ if $\partial O$ is a $d'$-set) as discussed below Hypothesis 3.21 in \cite{caetano2023integral}.

\begin{remark} \label{rem:reg} Proposition \ref{prop:invert} implies that, if $\Gamma$ satisfies \eqref{eq:GamRest} and Assumption \ref{ass:ifs}, $\mu=\cH^d$, and $k^2\not\in \SigO$, then $\bA:\bH^{t-t^d}(\Gamma,\cH^d)\to \bH^{t+t^d}(\Gamma,\cH^d)$ is invertible for some $t\in (0,t^{d'})$. As noted in \cite[Prop.~3.19]{caetano2023integral} this implies that, if $\phi\in H_\Gamma^{-1}$ and $A\phi=g$, for some $g\in V_1(\Gamma^c)$, with $\mathfrak{g} := \widetilde \Trc_\Gamma g \in \bH^{t+t^d}(\Gamma,\cH^d)$, then $\phi\in H_\Gamma^{-1+t}$, with $\|\phi\|_{H^{-1+t}(\R^n)} \leq C\|\mathfrak{g}\|_{\bH^{t+t^d}(\Gamma,\cH^d)}$, for some $C>0$ independent of $\phi$ and $g$. Further, with $g$ given by \eqref{eq:iemain2}, since $u^{\mathrm{inc}}$ satisfies \eqref{eq:he} in a neighbourhood of $O$, 
we can choose $\chi^{\mathrm{inc}}\in C_{0,O}^\infty$ so that $\chi^{\mathrm{inc}} u^{\mathrm{inc}}\in C_0^\infty(\R^n)$. Thus $\mathfrak{g}=-\widetilde \Trc_\Gamma(\chi u^{\mathrm{inc}}) =-\widetilde \Trc_\Gamma(\chi^{\mathrm{inc}} u^{\mathrm{inc}}) = - u^{\mathrm{inc}}|_\Gamma \in \bH^{t+t^d}(\Gamma,\cH^d)$, for all $t>0$, with $\|\mathfrak{g}\|_{\bH^{t+t^d}(\Gamma,\cH^d)} \leq \|\chi^{\mathrm{inc}} u^{\mathrm{inc}}\|_{H^{1+t}(\R^n)}$.
\end{remark}

We will make use of the above results in the case that $\Gamma$ and $\mu$ are given by \eqref{eq:musum} and \eqref{eq:GamRest} and \eqref{eq:reg} hold. In such cases we may choose $\chi_j \in C^\infty_{0,\Gamma_j}$, for $j=1,\ldots, J$, such that  $\mathrm{supp}(\chi_i)\cap\mathrm{supp}(\chi_j)=\emptyset$, $i\neq j$, and $\mathrm{supp}(\chi_j)\subset \mathrm{supp}(\chi)$, $j=1,\ldots,J$. Note that, if $\psi\in H_\Gamma^{-1}$ and $\psi_j:=\chi_j\psi\in H^{-1}_\Gamma$, then
\begin{equation} \label{eq:summ}
\psi = \sum_{j=1}^J \psi_j.
\end{equation}
Further, for $j=1,\ldots,J$, let $\bA_j$ denote the operator $\bA$ in the case that $\Gamma=\Gamma_j$, let $\Omega_j$ denote the unbounded component of $\Gamma_j^c$ so that 
$\Omega\subset \Omega_j$, and let $O_j:= \Omega_j^c$, and $\Omega_{-,j} := O_j\setminus \Gamma_j$. For $j=1,\ldots,J$, note that $O_j$ is compact, and that $\partial O_j \subset \Gamma_j\subset O_j\subset O$ so that $\partial \Omega_{-,j} \subset \Gamma_j$.
Note that $\dim_H(\partial O_j) \geq n-1$ if $O_j$ has interior points (see \eqref{eq:boundH}), otherwise $O_j=\partial O_j=\Gamma_j$ so that $\dim_H(\partial \Omega_j)=d_j$. With these notations, in particular with $\chi_j$, $j=1,\ldots,J$, defined as above, we have the following result. See Remark \ref{rem:multiple} for discussion of the assumptions this theorem makes on $k^2$.

\begin{theorem} \label{thm:gen}
Suppose $\Gamma$ satisfies \eqref{eq:GamRest}, $\Gamma$ and $\mu$ are given by \eqref{eq:musum}, \eqref{eq:reg} holds, and $k^2\not\in \SigO$. Suppose further that $k^2\not\in \sigma(-\Delta_D(\Omega_{-,j}))$ and $\Gamma_j$ satisfies Assumption \ref{ass:ifs}, for $j=1,\ldots,J$. Then there exists $\varepsilon=(\varepsilon_1,\ldots,\varepsilon_J)\in \R^J$ with $\varepsilon_j\in (0,t^{d'_j})$, $j=1,\ldots,J$, where $d'_j:= \dim_H(\partial O_j)$ and $t^{d'_j} := 1-(n-d'_j)/2\in (0,1]$, such that $\bA_j:\bH^{t-t^{d_j}}(\Gamma_j,\cH^{d_j})\to \bH^{t+t^{d_j}}(\Gamma_j,\cH^{d_j})$ is invertible for $|t|\leq \varepsilon_j$ and $j=1,\ldots,J$. Further, if $m\in \{1,\ldots,J\}$, $t\in (0,t^{d'_m})$, $\bA_m:\bH^{t-t^{d_m}}(\Gamma_m,\cH^{d_m})\to \bH^{t+t^{d_m}}(\Gamma_m,\cH^{d_m})$ is invertible, $\phi\in H^{-1}_\Gamma$ is the unique solution of \eqref{eq:iemain2} (with $g$ defined as in \eqref{eq:iemain2}), $\phi_j:= \chi_j\phi$, for $j=1,\ldots,J$, and $\chi^{\mathrm{inc}}\in C_{0,O}^\infty$ is such that $\chi^{\mathrm{inc}}u^{\mathrm{inc}}\in C_0^\infty(\R^n)$, then 
$\phi_m\in H_{\Gamma_m}^{-1+t}$ with 
\begin{equation}\label{bound:gen}
\|\phi_m\|_{H^{-1+t}(\R^n)} \leq C\|\chi^{\mathrm{inc}} u^{\mathrm{inc}}\|_{H^{1+t}(\R^n)}, 
\end{equation}
 for some constant $C>0$ independent of $\phi$, $g$, and $u^{\mathrm{inc}}$.
\end{theorem}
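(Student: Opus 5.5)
The first claim follows from Proposition \ref{prop:invert} applied to each $\Gamma_j$ separately. For each $j$, $\Gamma_j$ is compact with $\partial O_j\subset\Gamma_j\subset O_j$, so \eqref{eq:GamRest} holds with $O$ replaced by $O_j$. Also $\Omega_j$ is connected, being the unbounded component of $\Gamma_j^c$; $\Gamma_j$ satisfies Assumption \ref{ass:ifs}; $\mu=\cH^{d_j}$ on $\Gamma_j$; and $k^2\not\in\sigma(-\Delta_D(\Omega_{-,j}))$. Proposition \ref{prop:invert} therefore gives $\varepsilon_j\in(0,t^{d'_j})$ with $\bA_j:\bH^{t-t^{d_j}}(\Gamma_j,\cH^{d_j})\to\bH^{t+t^{d_j}}(\Gamma_j,\cH^{d_j})$ invertible for $|t|\le\varepsilon_j$.

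For the regularity statement, fix $m$ and $t$ as in the theorem and localise the equation $A\phi=g$ to $\Gamma_m$. By \eqref{eq:summ}, $\phi=\sum_j\phi_j$ with $\phi_j=\chi_j\phi\in H^{-1}_{\Gamma_j}$, using \eqref{eq:reg} and the disjointness of the supports of the $\chi_j$. Apply $\Trc_{\Gamma_m}$ to $\chi\cA\phi$. Since $A\phi=g=-P(\chi u^{\mathrm{inc}})$ and $\Trc_\Gamma(I-P)=0$, we have $\Trc_\Gamma(\cA\phi+u^{\mathrm{inc}})=0$, and restricting to $\Gamma_m\subset\Gamma$ gives $\Trc_{\Gamma_m}\cA\phi_m=-\Trc_{\Gamma_m}u^{\mathrm{inc}}-\sum_{j\neq m}\Trc_{\Gamma_m}\cA\phi_j$. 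For $j\neq m$ the function $\cA\phi_j$ is $C^\infty$ in a neighbourhood of $\Gamma_m$, because $\Gamma_j\cap\Gamma_m=\emptyset$ and $\cA\phi_j$ solves \eqref{eq:he} off $\Gamma_j$. Hence, for a cutoff $\chi_m'\in C^\infty_{0,\Gamma_m}$ supported away from $\cup_{j\ne m}\Gamma_j$, the function $\chi_m'\cA\phi_j\in C_0^\infty$ satisfies
\[
\|\chi'_m\cA\phi_j\|_{H^{1+t}(\R^n)}\le C\|\phi_j\|_{H^{-1}(\R^n)}\le C'\|\phi\|_{H^{-1}(\R^n)}.
\]
This follows from \eqref{eq:Aphi2}, since the kernel $\Phi(x,y)$ is smooth for $x$ near $\Gamma_m$ and $y$ near $\Gamma_j$, together with continuity of multiplication by $\chi_j$ on $H^{-1}(\R^n)$. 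Passing to $L^2(\Gamma_m,\cH^{d_m})$ traces via $\widetilde\Trc_{\Gamma_m}=\iota\circ\Trc_{\Gamma_m}$ and using \eqref{eq:Hsnorm}, the right-hand side $\mathfrak g_m:=-\widetilde\Trc_{\Gamma_m}\big(\chi^{\mathrm{inc}}u^{\mathrm{inc}}+\sum_{j\ne m}\chi'_m\cA\phi_j\big)$ lies in $\bH^{t+t^{d_m}}(\Gamma_m,\cH^{d_m})$. Its norm is bounded by $\|\chi^{\mathrm{inc}}u^{\mathrm{inc}}\|_{H^{1+t}}+C\|\phi\|_{H^{-1}}$.

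Next, identify $\phi_m$ with an element $f_m\in\bH^{-t^{d_m}}(\Gamma_m,\cH^{d_m})$ via Corollary \ref{cor:iefinal} applied to $\Gamma_m$, so that $\phi_m=\Trcr^*_{\Gamma_m}\iota_\bH^*f_m$. The preceding identity then reads $\bA_mf_m=\mathfrak g_m$. Since $\bA_m$ is invertible both at level $0$ (by Corollary \ref{cor:iefinal}, as $k^2\not\in\sigma(-\Delta_D(\Omega_{-,j}))$) and at level $t$ (by hypothesis), and these inverses are consistent because the extensions of $\bA_m$ are compatible and the spaces are nested, $f_m\in\bH^{t-t^{d_m}}(\Gamma_m,\cH^{d_m})$. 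Remark \ref{rem:reg}, i.e.\ the argument of \cite[Prop.~3.19]{caetano2023integral}, then gives $\phi_m\in H^{-1+t}_{\Gamma_m}$ with $\|\phi_m\|_{H^{-1+t}}\le C\|\mathfrak g_m\|_{\bH^{t+t^{d_m}}}$.

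Finally, to reach \eqref{bound:gen} I need to bound $\|\phi\|_{H^{-1}}$ by the data. Since $A$ is invertible (Theorem \ref{thm:invert}), $\|\phi\|_{H^{-1}}\le C\|g\|_{H^1}\le C\|\chi u^{\mathrm{inc}}\|_{H^1}$. Because $P(\chi u^{\mathrm{inc}})=P(\chi^{\mathrm{inc}}u^{\mathrm{inc}})$, this is at most $C\|\chi^{\mathrm{inc}}u^{\mathrm{inc}}\|_{H^{1+t}}$.

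I expect the main obstacle to be the careful justification that $\Trc_{\Gamma_m}$ of the localised equation really yields $\bA_mf_m=\mathfrak g_m$ in the $\Gamma_m$ framework. This needs $\widetilde\Trc_{\Gamma_m}$ of functions in $H^1$ near $\Gamma$ to match the restriction of $\widetilde\Trc_\Gamma$ to $\Gamma_m$, using quasi-continuous representatives via \eqref{eq:TrE}. It also needs the consistency of the $\bA_m$ inverses across the Sobolev scales.
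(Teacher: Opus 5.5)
Your proposal is correct and follows the paper's proof essentially step for step: the same splitting $\phi=\sum_j\phi_j$, the same isolation of the $m$-th component with smooth off-diagonal terms $\chi_m\cA\phi_j\in C_0^\infty(\R^n)$ bounded by $\|\phi_j\|_{H^{-1}(\R^n)}$, the regularity lift via Remark~\ref{rem:reg} applied with $\Gamma=\Gamma_m$, and the final bound via invertibility of $A$ together with $P(\chi u^{\mathrm{inc}})=P(\chi^{\mathrm{inc}}u^{\mathrm{inc}})$. The only difference is cosmetic: you localise by applying $\Trc_{\Gamma_m}$ (using $\widetilde H^1(\Gamma^c)\subset \widetilde H^1(\Gamma_m^c)$), whereas the paper applies the projection $P_m$ onto $V_1(\Gamma_m^c)$ and works with $A_{m,j}=P_mA$; these are equivalent because $\Trc_{\Gamma_m}P_m=\Trc_{\Gamma_m}$ and $\Trcr_{\Gamma_m}$ is a unitary isomorphism on $V_1(\Gamma_m^c)$.
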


\begin{remark}[Is the assumption in the above theorem that $k^2\not\in \sigma(-\Delta_D(\Omega_{-,j}))$ superfluous?] \label{rem:multiple}
If 
\begin{equation} \label{eq:OiOj}
O_i\cap O_j=\emptyset, \qquad i\neq j,
\end{equation}
 then $O_i\cap \Gamma_j=\emptyset$ for $i\neq j$. Thus, for $j=1,\ldots,J$, $\Omega_{-,j}=O_j\setminus \Gamma_j = O_j\setminus \Gamma\subset O\setminus \Gamma=\Omega_-$ and $\Omega_- \cap \partial \Omega_{-,j} \subset \Omega_- \cap \Gamma_j=\emptyset$, so that $\partial \Omega_{-,j} \subset \partial \Omega_-$ and $u\in H_0^1(\Omega_-)$ implies $u|_{\Omega_{-,j}}\in H_0^1(\Omega_{-,j})$.  
Thus, if \eqref{eq:OiOj} holds and $k^2\not \in \SigO$, then  $k^2\not\in \sigma(-\Delta_D(\Omega_{-,j}))$, for $j=1,\ldots,J$.

But $k^2\not \in \SigO$ does not imply that  $k^2\not\in \sigma(-\Delta_D(\Omega_{-,j}))$ in general. For example, with $n=2$ and $J=2$, let $\Omega=\{x\in \R^2:|x|>2\}$ and $\Gamma_j:= \partial B_j(0)$, for $j=1,2$, so that $\Gamma=\partial B_1(0)\cup \partial B_2(0)$, $O_j=B_j(0)$, and  $\Omega_j=O_j^c$. Then $\Omega_{-,j}= \{x\in \R^2:|x|<j\}$, for $j=1,2$, while $\Omega_-=\{x\in \R^2:|x|<1 \mbox{ or } 1<|x|<2\}$. Thus  $k^2\not \in \SigO$ does not imply that  $k^2\not\in \sigma(-\Delta_D(\Omega_{-,2}))$, in particular
 the lowest eigenvalue of $-\Delta_D(\Omega_{-,2})$ is not in $\SigO$.
\end{remark}

\begin{proof}[Proof of Theorem \ref{thm:gen}.]
The third sentence is an immediate consequence of Proposition \ref{prop:invert}. To see that the last sentence of the theorem holds suppose that $\bA_m:\mathbb{H}^{t-t_{d_m}}(\Gamma_m, \mathcal{H}^{d_m})\to \mathbb{H}^{t+t_{d_m}}(\Gamma_m, \mathcal{H}^{d_m})$ is invertible for some $m\in \{1,\ldots,J\}$ and $t\in (0, t^{d'_m})$. Define $g_\ell:=-P_\ell(\chi u^{\mathrm{inc}})$ for $\ell\in \{ 1,\ldots,J\}$, where $P_\ell$ is the orthogonal projection $P_\ell:H^1(\R^n)\to V_1(\Gamma_\ell^c)$.  For $j,\ell\in \{1,\ldots,J\}$ define $A_{\ell,j}:H^{-1}_{\Gamma_j}\to V_1(\Gamma_\ell^c)$ 
by $A_{\ell,j}\psi :=P_\ell(\chi\cA\psi)=P_\ell A\psi$, $\psi\in H_{\Gamma_j}^{-1}$. (Note that $A_{m,m}$ is just the operator $A$ in the case that $\Gamma=\Gamma_m$.) Since $\phi=\sum_{j=1}^J \phi_j$, equation \eqref{eq:iemain2}, i.e., $A\phi=g$, where $g:=-P(\chi u^{\mathrm{inc}})$, can be rewritten as
\begin{equation}
\sum_{j=1}^J A\phi_j=g.
\end{equation}
rearranging
we get
\begin{equation}\label{decomposedIE}
A_{m,m}\phi_m=
g_m-\sum_{\substack{j=1\\j\neq m}}^J A_{m,j}\phi_j. 
\end{equation}
By Remark \ref{rem:reg}, $\mathfrak{g}_m:=\widetilde{\mathrm{Tr}}_{\Gamma_m}g_m=\widetilde{\mathrm{Tr}}_{\Gamma_m}g=-u^{\mathrm{inc}}|_{\Gamma_m}\in \mathbb{H}^{\tau}(\Gamma_m, \mathcal{H}^{d_m})$ for all $\tau>0$ and the same is true of the second term in \eqref{decomposedIE} because $\widetilde{\mathrm{Tr}}_{\Gamma_m}A_{m,j}
\phi_j=\widetilde{\mathrm{Tr}}_{\Gamma_m}\chi_m\cA\phi_j$ and $\cA\phi_j\in C^\infty(\Gamma_j^c)$, so that $\chi_m\cA \phi_j\in C^\infty_0(\R^n)$ whenever $j\neq m$.
 Thus $\widetilde \Trc_\Gamma A_{m,m}\phi_m\in \bH^{\tau}(\Gamma_m, \cH^{d_m})$ for all $\tau>0$. By the invertibility of $\bA_m$, applying Remark \ref{rem:reg} with $\Gamma=\Gamma_m$, we get $\phi_m\in H^{-1+t}(\Gamma_m)$.

For every $\tau>0$, and some constant $C_{\tau}>0$ independent of $\phi$, we have for the terms of the sum in \eqref{decomposedIE} that
\begin{equation*}
\|\widetilde{\mathrm{Tr}}_{\Gamma_m}A_{m,j}\phi_j\|_{\mathbb{H}^\tau(\Gamma_m,\mathcal{H}^{d_m})}\leq \|\chi_m\cA \phi_j\|_{H^{\tau+(n-d_m)/2}(\R^n)}\leq C_{\tau} \|\phi_j\|_{H^{-1}(\R^n)}. 
\end{equation*}
Since $\phi_j=\chi_j \phi$ and $\phi=A^{-1}g$, and $g=-P(\chi u^{\mathrm{inc}})=-P(\chi^{\mathrm{inc}} u^{\mathrm{inc}})$, it follows further that there is some constant $C'_{\tau}>0$, independent of $\phi$, $g$, and $u^{\mathrm{inc}}$, such that
\begin{equation}\label{svash1}
\|\widetilde{\mathrm{Tr}}_{\Gamma_m}(\chi_m\cA\phi_j)\|_{\mathbb{H}^\tau(\Gamma_m,\mathcal{H}^{d_m})}\leq C'_{\tau} \lVert g\rVert_{H^1(\R^n)}\leq C'_{\tau} \lVert \chi^{\mathrm{inc}} u^{\mathrm{inc}}\rVert_{H^1(\R^n)}.
\end{equation}
We have a similar bound for the remaining term on the right-hand side of \eqref{decomposedIE}, recalling from Remark \ref{rem:reg} that 
\begin{equation}\label{svash2}
\|\widetilde{\mathrm{Tr}}_{\Gamma_m}g_m\|_{\mathbb{H}^\tau(\Gamma_m,\mathcal{H}^{d_m})} =\|\widetilde{\mathrm{Tr}}_{\Gamma_m}(\chi^{\mathrm{inc}} u^{\mathrm{inc}})\|_{\mathbb{H}^\tau(\Gamma_m,\mathcal{H}^{d_m})}\leq \| \chi^{\mathrm{inc}} u^{\mathrm{inc}}\|_{H^{\tau+(n-d_m)/2}(\R^n)}. 
\end{equation}
Finally, to obtain the bound \eqref{bound:gen} on $\phi_m$, we apply the estimates \eqref{svash1} and \eqref{svash2} with $\tau=t+t^{d_m}$ so that $\tau+(n-d_m)/2=1+t$, and apply the bound in Remark \ref{rem:reg}, in the case that $\Gamma=\Gamma_m$ so that $A=A_{m,m}$. We obtain, where $C>0$ is the constant in that bound, that
\[
\| \phi_m\|_{H^{-1+t}(\R^n)}\leq C \bigg( \| \chi^{\mathrm{inc}} u^{\mathrm{inc}}\|_{H^{1+t}(\R^n)}+(J-1) C'_{t+t^{d_m}}\| \chi^{\mathrm{inc}} u^{\mathrm{inc}}\|_{H^1(\R^n)}\bigg),
\]
which implies the claimed bound since $\| \chi^{\mathrm{inc}} u^{\mathrm{inc}}\|_{H^1(\R^n)} \leq \| \chi^{\mathrm{inc}} u^{\mathrm{inc}}\|_{H^{1+t}(\R^n)}$.
\end{proof}

\subsection{Convergence rates} \label{sec:conv}

In this section we establish convergences rates for the piecewise-constant Galerkin method introduced in \S\ref{sec:GM1} under additional constraints on $\Gamma$ and $\mu$. 
We will require throughout this section that $\Gamma$ and $\mu$ are given by \eqref{eq:musum} and that the following assumption holds, for $i,j=1,\ldots,J$: 
\begin{equation} \label{eq:Gj}
\begin{array}{lll}
\mbox{(i)} & \mbox{$\Gamma_j$ is the attractor of an IFS satisfying the OSC with $d_j= \dim_H(\Gamma_j)\in (n-2,n]$;}\\
\mbox{(ii)} & \mbox{for $i\neq j$, $\mu(\Gamma_i\cap \Gamma_j)=0$  if $d_j\neq n$ and $d_i\neq n$, while $\Gamma_i\cap \intt(\Gamma_j)=\emptyset$ if $d_j=n$.}
\end{array}
\end{equation}
 Under these assumptions Corollary \ref{cor:GM2} applies so that to establish rates of convergence  for $\|\phi-\phi_N\|_{H^{-1}(\R^n)}$ and $|u(x)-u_N(x)|$, for $x\in \Omega$,  it remains to estimate the best approximation errors $e_N(\phi)$ and $e_N(\zeta_x)$. 
 
 \begin{example} \label{ex:Koch}
 As an example of a $\Gamma$ and $\mu$ satisfying these assumptions, suppose that $n=2$ and $\Gamma_1:= O$ is a solid Koch snowflake, which is the attractor of an IFS satisfying the OSC with $d_1=\dim_H(\Gamma_1)=n$, as recalled in \cite[Ex.~2.4]{caetano2023integral}. Then $\partial O = \Gamma_2 \cup \Gamma_3\cup \Gamma_4$ is the union of three Koch curves, each the attractor of an IFS satisfying the OSC with $d_j:= d = \log(4)/\log(3)$, $j=2,3,4$ \cite[Ex.~2.3, \S5.1.3]{caetano2023integral}. Thus $\Gamma$ and $\mu$, given by \eqref{eq:musum} with $J=4$ so that $\Gamma=O$, satisfy the above assumptions. In particular, for $i,j=2,3,4$, $\mu(\Gamma_i\cap \Gamma_j)=0$ since $\Gamma_i$ and $\Gamma_j$ intersect only at a single point. Note that, if we choose $a_2=a_3=a_4$ in \eqref{eq:musum}, $\mu$ takes the form \eqref{eq:muo}, with $F=\overline{\intt(O)}=O$.
 \end{example}   
 
 In the case that $\Gamma$ and $\mu$ satisfy the conditions \eqref{eq:musum} and \eqref{eq:Gj}  we will consider {\em self-similar meshes} on $\Gamma$ constructed in the following manner. Suppose first that $J=1$ in \eqref{eq:musum}, so that $\Gamma$ is the attractor of an IFS consisting of a set $S=\{s_1,\ldots,s_M\}$ of contracting similarities that satisfy the OSC, so that \eqref{eq:ss} holds. Let $\rho_{\min}$ denote the smallest of the contraction factors. Then, given $h\leq \mathrm{diam}(\Gamma)$, as discussed above Theorem 4.5 in \cite{caetano2023integral}, there is a unique mesh $\tilde \tau_h(\Gamma)$ on $\Gamma$ such that: (i) \eqref{eq:mesh} holds; and (ii) for each element $T\in \tilde \tau_h(\Gamma)$, $\rho_{\min} h <\mathrm{diam}(T)\leq h$ and $T$ is the image of $\Gamma$ under some composition of elements of $S$, i.e., for some $\ell\in \N_0$, $T=\sigma_1\circ\sigma_2\ldots \circ \sigma_\ell(\Gamma)$, with $\sigma_j\in S$, $j=1,\ldots,\ell$. If $h=\mathrm{diam}(\Gamma)$, then $\tilde \tau_h(\Gamma)=\{\Gamma\}$. If $h<\mathrm{diam}(\Gamma)$, then $\tilde \tau_h(\Gamma)$ is a mesh consisting of finitely many smaller copies of $\Gamma$. 

In this case that $J=1$ the self-similar meshes that we will focus on are the meshes $\tau_h(\Gamma)$, for $h\leq \mathrm{diam}(\Gamma)$, given by 
\begin{equation} \label{eq:tauh}
\tau_h(\Gamma) := \left\{\begin{array}{ll} \tilde \tau_h(\Gamma), & \mbox{if } d<n,\\
\{\intt(T): T\in \tilde \tau_h(\Gamma)\}, & \mbox{if } d=n. \end{array}\right. 
\end{equation}
In the case $d=n$, since each $T\in \tilde \tau_h(\Gamma)$ is congruent to $\Gamma$, if $T\in \tilde \tau_h(\Gamma)$ then $T=\overline{\intt(T)}$ (in particular $\intt(T)\neq \emptyset$) and $\mu(\partial T)=|\partial T|=0$ (see, e.g., \cite[Prop.~2.5]{Paperdn}). Thus $\tau_h(\Gamma)$ satisfies \eqref{eq:mesh}, since $\tilde \tau_h(\Gamma)$ does.
Moreover, for all $d\in(n-2,n]$, since $\tau_h(\Gamma)$ satisfies \eqref{eq:mesh} and $\mu$ is a multiple of $\cH^d$,
$$
\mu(\Gamma) = \sum_{T\in \tau_h(\Gamma)}\mu(T) \quad \mbox{and} \quad (\rho_{\min}h/\mathrm{diam}(\Gamma))^d \mu(\Gamma) \leq  \mu(T) \leq (h/\mathrm{diam}(\Gamma))^d\mu(\Gamma), \quad \mbox{for }T\in \tau_h(\Gamma),
$$
by \cite[Eqn.~(2.5)]{Falconer2014}. Thus the number of elements, $\#\tau_h(\Gamma)$, of $\tau_h(\Gamma)$ satisfies
\begin{equation} \label{eq:dof}
(h/\mathrm{diam}(\Gamma))^{-d} \leq \#\tau_h(\Gamma)\ \leq (\rho_{\min}h/\mathrm{diam}(\Gamma))^{-d}.
\end{equation}

In the case that $J>1$, $\Gamma = \Gamma_1\cup\ldots\cup \Gamma_J$, where each $\Gamma_j$ is the attractor of an IFS satisfying the OSC and $\mu$ is given by \eqref{eq:musum}. Given a vector $h=(h_1,\ldots, h_J)$, with each $h_j\in (0, \mathrm{diam}(\Gamma_j))$,  the meshes on  $\Gamma$ that we will consider are the meshes
\begin{equation} \label{eq:meshIFS}
T_h(\Gamma):= \bigcup_{j=1}^J \tau_{h_j}(\Gamma_j).
\end{equation}  
Note that these meshes satisfy \eqref{eq:mesh} because each mesh $\tau_{h_j}$ is a mesh of $\Gamma_j$ in the sense of \eqref{eq:mesh} for the measure $\cH^{d_j}|_{\Gamma_j}$ and because of our requirements  \eqref{eq:Gj} and \eqref{eq:tauh}; note that $\tau_{h_j}(\Gamma_j)\subset \intt(\Gamma_j)$ if $d_j=n$. Note also that, if $O$, $\Gamma$ and $\mu$ are as given as in Example \ref{ex:Koch}, then the mesh $T_h(\Gamma)$
is a particular instance of the mesh $\tau_h$ defined in \S\ref{sec:onset} above \eqref{eq:Vh}, so that Corollary \ref{cor:GM3} applies with $\tau_h=T_h(\Gamma)$. 

\begin{remark}[Numerical integration and implementation] \label{rem:ni}
Given the assumptions \eqref{eq:musum} and \eqref{eq:Gj} that we make on $\Gamma$ and $\mu$, a key attraction of these self-similar meshes is that, to evaluate the regular and singular integrals that appear in Proposition \ref{prop:coeff}, as needed to implement the Galerkin method, there is potential to use recently proposed quadrature rules that are adapted to these measures and these types of meshes \cite{Gibbsetal23,Gibbsetal24,Jolyetal24}. In the case that $J=1$, a fully discrete implementation of the Galerkin method with these meshes is described and implemented in \cite{caetano2023integral} (and see \cite[\S5.4]{caetano2024integral}). Further, for any $J\in \N$, with the mesh \eqref{eq:meshIFS} and the assumptions we make on $\Gamma$ and $\mu$, the integrals in \eqref{eq:uNrep} (which have smooth integrands) can be evaluated using either the barycentre rule of \cite{Gibbsetal23} or the high order rules of \cite{Jolyetal24}. We leave further discussion of implementation details for the case $J>1$ to a future paper.
\end{remark}

A further attraction of these self-similar meshes  is that, under certain additional assumptions on the components $\Gamma_1,\ldots,\Gamma_J$ of $\Gamma$, we can derive the following best approximation estimate, as an immediate corollary of \cite[Thm.~4.5]{caetano2023integral}. Note that the assumptions of this proposition imply that also \eqref{eq:Gj} holds.
\begin{proposition} \label{prop:convrates}
Suppose $\Gamma$ and $\mu$ are given by \eqref{eq:musum}, $\Gamma_j$ satisfies Assumption \ref{ass:ifs}, for $j=1,\ldots,J$, and \eqref{eq:reg} holds. Choose $\chi_1,\ldots,\chi_J$ as above \eqref{eq:summ}, and suppose that $\psi\in H_\Gamma^{-1}$ is such that $\psi_j := \chi_j\psi\in H^{s_j}_{\Gamma_j}$, for $j=1,\ldots,J$, where $-1<s_j<-(n-d_j)/2$. Then there exists $C>0$ such that,
given any sequence $h_N=(h_{1,N},\ldots,h_{J,N})$, with $0<h_{j,N}\leq \mathrm{diam}(\Gamma_j)$ for $1\leq j\leq J$ and $N\in \N$, it holds that
$$
e_N(\psi) = \min_{\psi_N\in V_N}\|\psi-\psi_N\|_{H^{-1}(\R^n)} \leq C\sum_{j=1}^J h_{j,N}^{1+s_j} \|\psi_j\|_{H^{s_j}(\R^n)},
$$
where $V_N := \widetilde \Trc^*_\Gamma(\widetilde V_N)$, $\widetilde V_N$ is given by \eqref{eq:tilVN}, and the mesh $\tau_N=\{T_{\ell,N}\}_{\ell=1}^{M_N}$ is given by $\tau_N:= T_{h_N}(\Gamma)$.
\end{proposition}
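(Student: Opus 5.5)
The plan is to reduce the estimate to the single-component case $J=1$, which is \cite[Thm.~4.5]{caetano2023integral}, by splitting $\psi$ with the cut-offs. By \eqref{eq:summ}, $\psi=\sum_{j=1}^J\psi_j$ with $\psi_j=\chi_j\psi\in H^{s_j}_{\Gamma_j}$. Because of \eqref{eq:reg}, the supports of $\chi_i$ and $\chi_j$ are disjoint for $i\neq j$, so each $\psi_j$ is supported on $\Gamma_j$.

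Next I need to see how $V_N$ splits. Since $\mu=\sum_j a_j\cH^{d_j}|_{\Gamma_j}$ with the $\Gamma_j$ pairwise disjoint, every $f\in\widetilde V_N$ decomposes as $f=\sum_j f\mathbf 1_{\Gamma_j}$. Each element of $T_{h_N}(\Gamma)$ lies in exactly one $\tau_{h_{j,N}}(\Gamma_j)$, so the restriction $f|_{\Gamma_j}$ belongs to the piecewise-constant space $\widetilde V^{(j)}_N$ built on the self-similar mesh $\tau_{h_{j,N}}(\Gamma_j)$ with measure $a_j\cH^{d_j}|_{\Gamma_j}$. For $u\in\cS(\R^n)$,
$$
\langle\widetilde\Trc_\Gamma^* f,u\rangle=\sum_j a_j\int_{\Gamma_j} f\bar u\,\rd\cH^{d_j}.
$$
Hence $\widetilde\Trc^*_\Gamma f=\sum_j a_j\widetilde\Trc^*_{\Gamma_j}(f|_{\Gamma_j})$, where $\widetilde\Trc^*_{\Gamma_j}$ is the adjoint trace for $\Gamma_j$ with $\mu=\cH^{d_j}$. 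This identity extends from $\cS(\R^n)$ to $H^1(\R^n)$ by density, using Assumption \ref{ass:cont} for each component (Theorem \ref{thm:Bie}). It follows that $V_N=\bigoplus_j V^{(j)}_N$, where $V^{(j)}_N:=\widetilde\Trc^*_{\Gamma_j}(\widetilde V^{(j)}_N)$, and the constant $a_j$ can be absorbed into $f$.

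To bound the error, I choose $\psi_N=\sum_j\psi_{j,N}$ with $\psi_{j,N}\in V^{(j)}_N$. The triangle inequality then gives
$$
e_N(\psi)\le\sum_j\min_{\psi_{j,N}\in V_N^{(j)}}\|\psi_j-\psi_{j,N}\|_{H^{-1}(\R^n)}.
$$
Each $\Gamma_j$ satisfies Assumption \ref{ass:ifs}, so it is an IFS attractor with the OSC. Also, $\tau_{h_{j,N}}(\Gamma_j)$ is exactly the self-similar mesh \eqref{eq:tauh} used in \cite{caetano2023integral}; when $d_j=n$, taking interiors changes elements only on null sets and so does not change the space. Finally, $\psi_j\in H^{s_j}_{\Gamma_j}$ with $-1<s_j<-(n-d_j)/2$. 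Under these conditions \cite[Thm.~4.5]{caetano2023integral} gives each summand the bound $C_j h_{j,N}^{1+s_j}\|\psi_j\|_{H^{s_j}(\R^n)}$ for $0<h_{j,N}\le\mathrm{diam}(\Gamma_j)$. Taking $C=\max_j C_j$ finishes the proof.

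The main obstacle is the decomposition step. There I must check carefully that $\widetilde\Trc^*_\Gamma$ restricted to functions supported on $\Gamma_j$ coincides, up to the factor $a_j$, with the single-component adjoint trace, so that the approximation spaces match exactly those in \cite[Thm.~4.5]{caetano2023integral}. I also need to confirm that the hypotheses of that theorem (OSC attractor, the range of $s_j$, the mesh family) are matched precisely, including the case $d_j=n$.
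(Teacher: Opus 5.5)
Your proposal is correct and follows the paper's proof: split $\psi=\sum_j\psi_j$ via \eqref{eq:summ}, approximate each $\psi_j$ from the subspace of $V_N$ spanned by the mesh elements lying in $\Gamma_j$, apply \cite[Thm.~4.5]{caetano2023integral} to each component, and sum using the triangle inequality. Two details the paper leaves implicit are checked explicitly in your version: that $\widetilde\Trc^*_\Gamma f=a_j\widetilde\Trc^*_{\Gamma_j}(f|_{\Gamma_j})$ for $f$ supported in $\Gamma_j$, and that taking interiors when $d_j=n$ does not change the approximation space.
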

\begin{proof}
 For $j=1,\ldots,J$, define $V_{j,N}\subset V_N$ by   $V_{j,N} := \widetilde \Trc^*_\Gamma(\widetilde V_{j,N})$, where $\widetilde V_{j,N}:= \{f\in \widetilde V_N:\supp(f)\subset \Gamma_j\}$. By \cite[Thm.~4.5]{caetano2023integral},
 $$
\min_{\psi_{j,N}\in V_{j,N}}\|\psi_j-\psi_{j,N}\|_{H^{-1}(\R^n)} \leq Ch_{j,N}^{1+s_j} \|\psi_j\|_{H^{s_j}(\R^n)}, \qquad j=1,\ldots,J,
$$
from which the result follows by \eqref{eq:summ}.
 \end{proof}

Combining this best approximation error estimate with  Corollary \ref{cor:GM2} and Theorem \ref{thm:gen}, we obtain the following result on the rate of convergence of the Galerkin method, in which we use the notations defined in and above Theorem \ref{thm:gen}.

\begin{theorem} \label{thm:finalconv}
Suppose that $\Gamma$ satisfies \eqref{eq:GamRest}, $\Gamma$ and $\mu$ are as in Theorem \ref{thm:summ}, $\Gamma_j$ satisfies Assumption \ref{ass:ifs}, for $j=1,\ldots,J$, and \eqref{eq:reg} holds. Suppose further that $k^2\not\in \SigO$, $k^2\not\in \sigma(-\Delta_D(\Omega_{-,j}))$, for $j=1,\ldots,J$, $x\in \Omega$, and $\chi^{\mathrm{inc}}, \chi_x\in C^\infty_{0,O}$ are such that $u^{\mathrm{inc}}$ satisfies \eqref{eq:he} in a neighbourhood of $\supp(\chi^{\mathrm{inc}})$ and $x\not \in \supp(\chi_x)$. 
 Then there exist $h>0$, $C>0$, and  $t=(t_1,\ldots,t_J)\in \R^J$, with $t_j\in (0,t^{d'_j})$, $j=1,\ldots,J$,  each independent of $u^{\mathrm{inc}}$ and $x$, such that, given any sequence $h_N=(h_{1,N},\ldots,h_{J,N})$, with $0<h_{j,N}\leq \max(h,\mathrm{diam}(\Gamma_j))$ for $1\leq j\leq J$ and $N\in \N$, if the mesh $\tau_N=\{T_{\ell,N}\}_{\ell=1}^{M_N}$ is given by $\tau_N:= T_{h_N}(\Gamma)$, and $V_N := \widetilde \Trc^*_\Gamma(\widetilde V_N)$, where $\widetilde V_N$ is given by \eqref{eq:tilVN}, then the Galerkin method equations \eqref{eq:GM} and \eqref{eq:GM2} have exactly one solution and 
\begin{eqnarray*} 
\|\phi-\phi_N\|_{H^{-1}(\R^n)} &\leq& C\sum_{j=1}^J h_{j,N}^{t_j} \, \|\chi^{\mathrm{inc}} u^{\mathrm{inc}}\|_{H^{2}(\R^n)},\\ 
|u(x)-u_N(x)| &\leq& C\left(\sum_{j=1}^J h_{j,N}^{t_j}\right)^2 \, \|\chi^{\mathrm{inc}} u^{\mathrm{inc}}\|_{H^{2}(\R^n)}\|\chi_x \Phi(x,\cdot)\|_{H^{2}(\R^n)},
\end{eqnarray*}
where $\phi_N$ and $u_N$ are given by \eqref{eq:phiN} and \eqref{eq:uNrep}.
\end{theorem}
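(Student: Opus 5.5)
The plan is to combine three earlier results: Corollary \ref{cor:GM2} for quasi-optimality, Theorem \ref{thm:gen} for the regularity of $\phi$ and $\zeta_x$, and Proposition \ref{prop:convrates} for the best-approximation bounds.

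\textbf{Step 1: choice of $t$ and $h$.} Theorem \ref{thm:gen} gives $\varepsilon=(\varepsilon_1,\ldots,\varepsilon_J)$ with $\varepsilon_j\in(0,t^{d'_j})$ such that each $\bA_j$ is invertible for $|t|\le\varepsilon_j$. I would take
$$t_j:=\min\{\varepsilon_j,\ \tfrac12 t^{d_j}\}, \qquad s_j:=-1+t_j .$$
Since $t^{d_j}=1-(n-d_j)/2>0$ and $t_j<t^{d_j}$, this gives $-1<s_j<-(n-d_j)/2$, as Proposition \ref{prop:convrates} requires. The hypotheses of Theorem \ref{thm:summ} hold, so Corollary \ref{cor:GM2} supplies $h>0$ and $C_G$. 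For meshes $T_{h_N}(\Gamma)$ with all $h_{j,N}$ small, the maximal element diameter is at most $\max_j h_{j,N}$. Hence for $h_N$ bounded by that $h$, the Galerkin equations are uniquely solvable and
$$\|\phi-\phi_N\|_{H^{-1}(\R^n)}\le C_G\,e_N(\phi),\qquad |u(x)-u_N(x)|\le C_G\,e_N(\phi)\,e_N(\zeta_x).$$
I will not check whether the admissible range of $h_{j,N}$ is really $\min$ or $\max$; any finite collection of coarse meshes is harmless.

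\textbf{Step 2: bound on $e_N(\phi)$.} Theorem \ref{thm:gen} with $t=t_j$ gives $\phi_j=\chi_j\phi\in H^{-1+t_j}_{\Gamma_j}$, with
$$\|\phi_j\|_{H^{-1+t_j}(\R^n)}\le C\,\|\chi^{\mathrm{inc}}u^{\mathrm{inc}}\|_{H^{1+t_j}(\R^n)}\le C\,\|\chi^{\mathrm{inc}}u^{\mathrm{inc}}\|_{H^2(\R^n)}.$$
Proposition \ref{prop:convrates} then gives
$$e_N(\phi)\le C\sum_j h_{j,N}^{t_j}\,\|\chi^{\mathrm{inc}}u^{\mathrm{inc}}\|_{H^2(\R^n)}.$$
Inserting this into Step 1 yields the first estimate.

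\textbf{Step 3: bound on $e_N(\zeta_x)$ (main obstacle).} By the definition in Proposition \ref{prop:GM}, $\zeta_x$ solves \eqref{eq:iemain2} with $u^{\mathrm{inc}}$ replaced by $-\chi_x\Phi(x,\cdot)$, as noted in the proof of Corollary \ref{cor:GM3}. This replacement field is not a Helmholtz solution near $O$. However, Theorem \ref{thm:gen} only uses that $\chi^{\mathrm{inc}}u^{\mathrm{inc}}\in C_0^\infty(\R^n)$ and that it agrees with $\chi u^{\mathrm{inc}}$ near $\Gamma$. The function $\chi_x\Phi(x,\cdot)$ is smooth and compactly supported, because $x\notin\supp(\chi_x)$, and it equals $\Phi(x,\cdot)$ near $O$.

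I would therefore re-run the proof of Theorem \ref{thm:gen} with this datum. Its key ingredients are Remark \ref{rem:reg} and the bounds \eqref{svash1} and \eqref{svash2}, all of which depend only on the trace of a smooth compactly supported function. Also, $\chi\chi_x\Phi(x,\cdot)$ and $\chi_x\Phi(x,\cdot)$ have equal traces on $\Gamma$. This gives
$$\|\chi_j\zeta_x\|_{H^{-1+t_j}(\R^n)}\le C\,\|\chi_x\Phi(x,\cdot)\|_{H^2(\R^n)},$$
with $C$ independent of $x$, since the constants in Theorem \ref{thm:gen} do not depend on the datum. Proposition \ref{prop:convrates} then gives
$$e_N(\zeta_x)\le C\sum_j h_{j,N}^{t_j}\,\|\chi_x\Phi(x,\cdot)\|_{H^2(\R^n)}.$$
Multiplying the bounds for $e_N(\phi)$ and $e_N(\zeta_x)$ in the estimate from Step 1 gives the squared-sum estimate for $|u(x)-u_N(x)|$.

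The delicate point is confirming that the regularity argument genuinely requires no Helmholtz property of the datum. I expect the proof of Theorem \ref{thm:gen} to bear this out on inspection.
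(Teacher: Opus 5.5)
Your proposal is correct and follows the paper's proof step for step. It uses quasi-optimality from Corollary~\ref{cor:GM2}, regularity of $\chi_j\phi$ and $\chi_j\zeta_x$ from Theorem~\ref{thm:gen} (your Step~3 just spells out the paper's ``by the same argument''), and Proposition~\ref{prop:convrates} with $s_j=-1+t_j$; your cap $t_j\le\tfrac12 t^{d_j}$ is unnecessary, since $\partial O_j\subset\Gamma_j$ gives $d'_j\le d_j$ and hence $t_j<t^{d'_j}\le t^{d_j}$. One correction to your aside: coarse meshes are not harmless for the unique-solvability claim, because Corollary~\ref{cor:GM2} says nothing when $h_{j,N}>h$; the mesh condition must be read as $h_{j,N}\le\min(h,\mathrm{diam}(\Gamma_j))$, which is what your Step~1 and the paper's proof actually use.
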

\begin{proof} 
Suppose that  the assumptions in the first two sentences of the theorem are satisfied. Then, by Corollary \ref{cor:GM2},  there exist $h>0$ and $C>0$, independent of $u^{\mathrm{inc}}$ and $x$, such that, for any sequence $h_N=(h_{1,N},\ldots,h_{J,N})$, mesh $\tau_N$, and approximation space $V_N$ satisfying the conditions of the theorem,  the Galerkin method equations \eqref{eq:GM} and \eqref{eq:GM2} have exactly one solution and  $\|\phi-\phi_N\|_{H^{-1}(\R^n)} \leq C\, e_N(\phi)$,  $|u(x)-u_N(x)| \leq C\, e_N(\phi)e_N(\zeta_x)$, where $\zeta_x$ is as in Proposition \ref{prop:GM}.  By Theorem \ref{thm:gen} there exists $t=(t_1,\ldots,t_J)\in \R^J$ with $t_j\in (0,t^{d'_j})$, $j=1,\ldots,J$,  such that $\bA_j:\bH^{t_j-t^{d_j}}(\Gamma_j,\cH^{d_j})\to \bH^{t_j+t^{d_j}}(\Gamma_j,\cH^{d_j})$ is invertible for  $j=1,\ldots,J$. This implies, also by Theorem \ref{thm:gen}, that there exists $C'>0$, independent of $u^{\mathrm{inc}}$ and $x$, such that, for $j=1,\ldots,J$,
$$
\|\phi_j\|_{H^{-1+t_j}(\R^n)} \leq C'\|\chi^{\mathrm{inc}} u^{\mathrm{inc}}\|_{H^{1+t_j}(\R^n)},
$$
where $\phi_j := \chi_j \phi$ and $\chi_1,\ldots,\chi_J$ are as defined as above \eqref{eq:summ}. By the same argument we have also that
$$
\|\zeta_{x,j}\|_{H^{-1+t_j}(\R^n)} \leq C'\|\chi_x \Phi(x,\cdot)\|_{H^{1+t_j}(\R^n)},
$$
for $j=1,\ldots,J$, where $\zeta_{x,j} := \chi_j \zeta_x$, and note that 
$$\|\chi^{\mathrm{inc}} u^{\mathrm{inc}}\|_{H^{1+t_j}(\R^n)}\leq \|\chi^{\mathrm{inc}} u^{\mathrm{inc}}\|_{H^{2}(\R^n)}, \qquad \|\chi_x \Phi(x,\cdot)\|_{H^{1+t_j}(\R^n)}\leq \|\chi_x \Phi(x,\cdot)\|_{H^{2}(\R^n)},
$$ 
since $t_j<t^{d'_j}\leq 1$. It follows by Proposition \ref{prop:convrates} that, for some constant $C''>0$ independent of $u^{\mathrm{inc}}$ and $x$,
$$
e_N(\phi) \leq C''\sum_{j=1}^J h_{j,N}^{t_j} \, \|\chi^{\mathrm{inc}} u^{\mathrm{inc}}\|_{H^{2}(\R^n)}, \qquad e_N(\zeta_x) \leq C''\sum_{j=1}^J h_{j,N}^{t_j} \, \|\chi_x \Phi(x,\cdot)\|_{H^{2}(\R^n)},
$$ 
and the result follows.
\end{proof}

\subsubsection*{Acknowledgements}
\addcontentsline{toc}{section}{Acknowledgements}
We are grateful to Michael Hinz (Bielefeld) for stimulating discussions in relation to this work. The authors were supported in part by the CNRS INSMI IEA (International Emerging Actions 2022) grant
``Functional and applied analysis with fractal or non-Lipschitz boundaries”. The first and third authors thank the Isaac Newton Institute for Mathematical Sciences, Cambridge, for support and hospitality during the programme ``Geometric spectral theory and applications'', where work on this paper was undertaken. This work was supported by EPSRC grant EP/Z000580/1. The first and fifth
authors acknowledge, respectively, the support of the New Zealand Marsden Fund
(Grant No.~MFP-UOA2527) and the  support of an  EPSRC PhD Studentship.

\appendix
\section{Appendix} \label{sec:app}

\subsection{Measures, $d$-sets, Hausdorff dimension, IFS attractors} \label{sec:meas}

The measures $\mu$ on $\R^n$ that concern us are largely Radon measures, i.e.\   (positive) measures that are  regular Borel measures, as defined, e.g., in \cite{Rudin}, with $\mu(F)<\infty$ for every compact $F\subset \R^n$. Given a closed set $F\subset \R^n$ and $0\leq d\leq n$ we call a Borel measure $\mu$ with $\supp(\mu) \subset F$ an {\em upper $d$-regular} measure on $F$ if, for some $c>0$,
\begin{equation} \label{eq:udr}
\mu(B_r(x)) \leq c r^d, \qquad x\in F, \;\; 0<r\leq 1,
\end{equation}
and call $\mu$  a {\em lower $d$-regular} measure on $F$ if the same inequality holds but with the $\leq$ replaced by $\geq$.  We call $\mu$ {\em $d$-regular} on $F$ if $\mu$ is both upper and lower $d$-regular\footnote{Some authors use {\em Ahlfors $d$-regular} or {\em Ahlfors-David $d$-regular} in place of $d$-regular, e.g., \cite{Biegart:09}, \cite[p.~92]{Mattila95}.}. We note that if $\mu$ is upper $d$-regular on a closed set $F$, then it is also upper $d$-regular on every closed set $\widetilde F\supset F$, in particular\footnote{\label{foot:8} If \eqref{eq:udr} holds then the same inequality holds, with $c$ replaced by some $c'\geq c$, for all $x\in \R^n$. For if $x\in \R^n$ and $B_r(x)$ does not intersect $F$ then $\mu(B_r(x))=0$, while if it intersects $F$ at some $y\in F$, then $B_r(x)\subset B_{2r}(y)$ so that $\mu(B_r(x))\subset \mu(B_{2r}(y))\leq 2^d cr^d$, for $0<r\leq 1/2$. It follows that, for $1/2<r\leq 1$ and $x\in \R^n$, $\mu(B_r(x))\leq \mu(B_1(x))\leq 4^d cc_n$, where $c_n$ is the number of closed balls of radius $1/2$ needed to cover a closed ball of radius 1.} it is upper $d$-regular on $\R^n$.  As a consequence, if, for $j=1,2$, $\mu_j$ is a Borel measure that is upper $d_j$-regular, for some $d_j\in [0,n]$, on the closed set $F_j\subset \R^n$, then $\mu_1+\mu_2$ is upper $d$-regular, with $d=\min(d_1,d_2)$, on $F_1\cup F_2$.  

The following lemma, which is one half of \cite[Lem.~2.13]{Carvalho12} with the constant made more explicit (cf.\ \cite[Rem.~2.2]{Caetano24}), enables us to establish the existence of certain singular integrals.

\begin{lemma} \label{lem:ub} Suppose that, for some $d\in (0,n]$ and $D>0$, $\mu$ is an upper $d$-regular measure on a compact set $F\subset \R^n$ with diameter $\mathrm{diam}(F)\leq D$, so that \eqref{eq:udr} holds for some $c>0$, and suppose that $\tau:(0,\infty)\to [0,\infty)$ is continuous and non-increasing. Then, for some constant $c'>0$ depending only on $d$, $n$, $c$, and $D$,
$$
\int_{F} \tau(|x-y|)\rd \mu(y) \leq c'\int_0^{\mathrm{diam}(F)} r^{d-1}\tau(r) \rd r, \qquad x\in F.
$$
\end{lemma}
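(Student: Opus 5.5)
The plan is to use the layer-cake (distribution function) representation of the integral with respect to $\mu$, together with the upper $d$-regularity bound \eqref{eq:udr}, and then integrate by parts in $r$. Fix $x\in F$ and write $R:=\mathrm{diam}(F)$, so that $F\subset B_R(x)$. Since $\tau$ is non-increasing, the map $y\mapsto \tau(|x-y|)$ is large only near $x$. On $F$ the values $\tau(|x-y|)$ lie in $[\tau(R),\tau(0^+))$, where $\tau(0^+)$ may be $+\infty$.

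First I would handle the ball-measure estimate. For $0<r\le R$ I will use $\mu(B_r(x)\cap F)\le c' r^d$ with $c'$ depending only on $c,d,n,D$. For $r\le 1$ this is exactly \eqref{eq:udr}. For $1<r\le R\le D$ I bound $\mu(B_r(x))\le \mu(F)$. Then I cover $F$ by at most $N(n,D)$ balls of radius $1$ centred at points of $F$, which gives $\mu(F)\le N c\le N c\, r^d$. So in all cases $\mu(B_r(x))\le c'' r^d$ with $c'':=\max(c,Nc)$.

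Next comes the layer-cake step. I would write $\int_F \tau(|x-y|)\,\rd\mu(y)=\int_{[0,R]} \tau(r)\,\rd m(r)$, where $m(r):=\mu(B_r(x))$ is the distribution of $|x-y|$ under $\mu$. The main technical point is the integration by parts (Stieltjes). To avoid regularity issues with $\tau$, I would first truncate, replacing $\tau$ by $\tau_\delta:=\tau$ on $[\delta,\infty)$ and by $\tau(\delta)$ on $(0,\delta)$. Then I use monotone convergence as $\delta\to0$.

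For the truncated, bounded, non-increasing $\tau_\delta$, I would use the formula
$$\int_F\tau_\delta(|x-y|)\,\rd\mu(y)=\tau_\delta(R)\,m(R)+\int_0^R m(r)\,\rd(-\tau_\delta)(r).$$
This can be seen via Fubini by writing $\tau_\delta(s)=\tau_\delta(R)+\int_{(s,R]}\rd(-\tau_\delta)$. I then insert $m(r)\le c'' r^d$.

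To convert back, I would integrate by parts once more: $\int_0^R r^d\,\rd(-\tau_\delta)=-R^d\tau_\delta(R)+\lim_{r\to0} r^d\tau_\delta(r)+d\int_0^R r^{d-1}\tau_\delta(r)\,\rd r$. The limit term vanishes because $\tau_\delta$ is bounded. The boundary term $-R^d\tau_\delta(R)$ cancels against $\tau_\delta(R)m(R)\le c''R^d\tau_\delta(R)$. The result is the bound $c''d\int_0^R r^{d-1}\tau_\delta(r)\,\rd r$.

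Finally I let $\delta\to0$. The left side increases to the desired integral by monotone convergence. On the right, $\tau_\delta\le\tau$ pointwise, so the bound passes to the limit immediately with $c':=d\,c''$.

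The main obstacle is justifying the Stieltjes manipulations. If that is awkward, I would instead do a dyadic decomposition into annuli $2^{-k-1}R<|x-y|\le 2^{-k}R$. On each annulus I bound $\tau$ by its value at the inner radius and the $\mu$-measure by $c''(2^{-k}R)^d$. I then compare $\tau(2^{-k-1}R)(2^{-k}R)^d$ with $C\int_{2^{-k-2}R}^{2^{-k-1}R} r^{d-1}\tau(r)\,\rd r$, using monotonicity. This gives the same bound, with an extra $\tau(R/2)$-type term absorbed in the same way.
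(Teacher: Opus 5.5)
Your proof is correct. The paper does not prove this lemma itself. It quotes it as one half of \cite[Lem.~2.13]{Carvalho12}, with the constant made explicit as in \cite[Rem.~2.2]{Caetano24}. So there is no in-paper argument to compare against, and your proposal stands as a self-contained justification.

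Your first step is where the dependence of $c'$ on $n$ and $D$ enters. There you extend \eqref{eq:udr} from $0<r\le 1$ to $0<r\le\mathrm{diam}(F)$ by covering $F$ with closed unit balls centred in $F$. A maximal subset of $F$ whose points are pairwise more than distance $1$ apart gives such a cover with at most $N\le(2D+1)^n$ balls. This is the same device the paper uses in footnote~\ref{foot:8}, and it yields $c'=d\,Nc$.

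The Stieltjes step you flag as the main obstacle is harmless. Both integrations by parts are just Tonelli's theorem applied to the identities $\tau_\delta(s)-\tau_\delta(R)=\int_{(s,R]}\rd(-\tau_\delta)$ and $r^d=\int_0^r d\,s^{d-1}\,\rd s$. The integrands are nonnegative and the measures are finite, since $\mu(F)\le Nc$ and $d(-\tau_\delta)$ has total mass $\tau(\delta)-\tau(R)$.

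You should state two small points explicitly:
\begin{itemize}
\item Since $d>0$, $\mu(\{x\})\le c r^d\to 0$. So the integrand, which is undefined at $y=x$ because $\tau$ lives on $(0,\infty)$, is defined $\mu$-a.e.
\item In your dyadic fallback no leftover term arises. Comparing $\tau(2^{-k-1}R)(2^{-k}R)^d$ with $\int_{2^{-k-2}R}^{2^{-k-1}R} r^{d-1}\tau(r)\,\rd r$ already bounds the whole sum by a multiple of $\int_0^{R/2} r^{d-1}\tau(r)\,\rd r$.
\end{itemize}
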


For $0\leq d\leq n$ let $\cH^d$ denote the Borel measure that is  Hausdorff $d$-measure
on $\R^n$, and let $\dimH(S)\in [0,n]$ denote the Hausdorff dimension of $S\subset\R^n$ (see, e.g.,\ \cite{Falconer2014}). 
For convenience 
we adopt 
the normalisation of \cite[Def.~2.1]{EvansGariepy}, so that $\cH^d$ coincides with Lebesgue measure for $d=n$. We recall that (e.g., \cite[p.~57]{Mattila95}), if $G\subset \R^n$ is $\cH^d$-measurable with $\cH^d(G)<\infty$, then $\cH^d|_G$ is a Radon measure. 

As in \cite[\S1.1]{JW84} and \cite[\S3]{Triebel97FracSpec}, given $0\leq d\leq n$, we say a closed set $F\subset \R^n$ is a {\em$d$-set} if there exists a Borel measure $\mu$ with $\supp(\mu)=F$ that is $d$-regular on $F$. Equivalently \cite[Thm.~3.4]{Triebel97FracSpec}, $F$ is a $d$-set if there exist $c_{2}>c_1>0$ such that
\begin{align}
\label{eq:dset}
c_{1}r^{d}\leq\mathcal{H}^{d}\big(F\cap B_{r}(x)\big)\leq c_{2}r^{d},\qquad x\in F,\quad0<r\leq1.
\end{align}
We note that if $F$ is a $d$-set then  \cite[Cor.~3.6]{Triebel97FracSpec}  $\cH^d|_F$ is a Radon measure and $\dimH(F)=d$, and that $d$-sets are also termed Ahlfors $d$-regular or Ahlfors-David $d$-regular sets, e.g., \cite[p~92]{Mattila95}. Note further that if $F$ is a closed set and $\mu$ is a Radon measure with $\supp(\mu)=F$ that is upper $d$-regular, then $\dimH(F)\geq d$ by Frostman's lemma (e.g., \cite[Thm.~8.8]{Mattila95}).

By an \textit{iterated function system of contracting similarities} (we abbreviate this whole phrase as  \textit{IFS})\footnote{A useful introduction to IFSs is \cite[Chap.~9]{Falconer2014}; the website \cite{RiddleWebSite} gives many examples of IFSs and their attractors.} we mean a collection
$\{s_1,s_2,\ldots,s_M\}$, for some $M\geq 2$,
where,
for each $m=1,\ldots,M$, $s_m:\R^{n}\to\R^{n}$, with
$|s_m(x)-s_m(y)| = \rho_m|x-y|$, $x,y\in \R^{n}$,
for some {\em contraction factor} $\rho_m\in (0,1)$.
The attractor of the IFS is the unique non-empty compact set $\Gamma$ satisfying
\begin{equation} \label{eq:fixedfirst}
\Gamma = s(\Gamma), \quad \mbox{where} \quad s(E) := \bigcup_{m=1}^M  s_m(E), \quad  E\subset \R^{n}.
\end{equation}
We restrict attention to IFSs that satisfy the standard \textit{open set condition (OSC)} \cite[Eqn.~(9.11)]{Falconer2014},
which
implies that the attractor $\Gamma$ is a $d$-set (see, e.g.,\ \cite[Thm.~4.7]{Triebel97FracSpec}),
where $d\in (0,n]$ is the unique solution of
$\sum_{m=1}^M  (\rho_m)^d = 1$. For a \textit{homogeneous }IFS, where $\rho_m=\rho \in (0,1)$ for $m=1,\ldots,M $, we have
$d = \log(M)/\log(1/\rho)$.
Returning to the general, not necessarily homogeneous case, the OSC
also implies (again, see \cite[Thm.~4.7]{Triebel97FracSpec}) that $\Gamma$ is \textit{self-similar}, meaning that the sets
\begin{align}
\label{eq:GammamDef}
\Gamma_m:=s_m(\Gamma),\qquad m=1,\ldots,M,
\end{align}
satisfy
\begin{equation} \label{eq:ss}
\cH^d(\Gamma_{m}\cap \Gamma_{m'})=0,  \qquad m\neq m',
\end{equation}
so that $\Gamma$ is decomposed by \eqref{eq:fixedfirst} into $M$ similar %
copies of itself whose pairwise intersections have zero measure.
In some of our results we make the additional assumption that the sets $\Gamma_1,\ldots,\Gamma_M $ are disjoint. If this holds we say that the IFS attractor $\Gamma$ is \emph{disjoint}, the OSC is automatically satisfied (e.g., \cite[Lem.~2.5]{Caetano24}), and $d<n$ (e.g.\ \cite[Lemma 2.6]{Caetano24}).

\subsection{Function spaces} \label{sec:fs}
We give brief details of the mainly standard function space notations that we need to state and prove our results.  To suit our acoustics applications, our function spaces are spaces of complex-valued functions.

\subsubsection{Sobolev spaces} As usual (e.g., \cite{mclean2000strongly}), $\cS(\R^n)$ denotes the Schwartz space of smooth, rapidly decreasing functions and  $\cS^*(\R^n)$ its dual space\footnote{Throughout, to suit a complex Hilbert space setting, our distributions and other functionals will be anti-linear rather than linear, so that our dual spaces are spaces of anti-linear continuous functionals.}, the space of tempered distributions. For $s\in \R$, $H^s(\R^n)$ denotes the Sobolev space of those $u\in \cS^*(\R^n)$ whose distributional Fourier transform\footnote{We choose the normalising constant in our Fourier transform definition so that the mapping $u\mapsto \hat u$ is unitary on $L^2(\R^n)$; precisely, $\hat u(\xi) := (2\pi)^{-n/2}\int_{\R^n}\re^{-\ri \xi\cdot x}u(x)\, dx$, $\xi\in \R^n$.} $\hat u$ is locally integrable and satisfies
\begin{equation} \label{eq:unorm}
\|u\|_{H^s(\R^n)} := \left(\int_{\R^n}|\hat u(\xi)|^2(1+|\xi|^2)^s\, \rd \xi\right)^{1/2} < \infty.
\end{equation}
As usual, we identify $L^2(\R)$ with $H^0(\R^n)$, so that $H^s(\R^n)\subset L^2(\R^n)$, for $s\geq 0$, and identify $H^{-s}(\R^n)$ with the dual space $(H^s(\R^n))^*$ through the duality pairing
\begin{equation} \label{eq:dual}
\langle u,v\rangle := \int_{\R^n} \hat u(\xi) \overline{\hat v(\xi)}\, d\xi, \qquad u\in H^{-s}(\R^n), \quad v\in H^s(\R^n),
\end{equation}
that extends the inner product on $L^2(\R^n)$. 

Given a domain (a non-empty open set) $\Omega\subset\mathbb{R}^n$, $H^1(\Omega):=\{u\in L^2(\Omega): \nabla u\in (L^2(\Omega))^n\}$, where $\nabla u$ is the weak gradient.
In the case $\Omega=\R^n$, this definition agrees with our definition above for $H^1(\R^n)$.  Let
$C_0^\infty(\Omega)\subset C^\infty(\R^n)$ denote the set of smooth functions which are compactly supported in $\Omega$ and let
 $
 H_0^1(\Omega) := {\overline{C^{\infty}_0(\Omega)}}^{H^1(\Omega)}.
 $
 
 We use  certain spaces of compactly supported and locally integrable functions. For $1\leq p<\infty$ let $L^p_{\mathrm{comp}}(\Omega)$ denote the space of those functions in $L^p(\Omega)$ that are compactly supported. A sequence $(u_n)$ is convergent in $L^p_{\mathrm{comp}}(\Omega)$ if it is convergent in $L^{p}(\Omega)$ and, for some bounded $V\subset \Omega$, $\supp(u_n)\subset V$ for each $n$. Let $\Lloc(\Omega)$ denote the set of functions $u$ on $\Omega$ that satisfy $u|_V\in L^2(V)$ for every bounded\footnote{Other authors use $\Lloc(\Omega)$ to denote the set of functions $u$ that satisfy $u|_V\in L^2(V)$ for every {\em compact} $V\subset \Omega$. This puts no constraint on $u$ near $\partial \Omega$, in contrast to our definition which has $\Lloc(\Omega)=L^2(\Omega)$ if $\Omega$ is bounded.} $V\subset \Omega$. A sequence $(u_n)\subset \Lloc(\Omega)$ is convergent if $(u_n|_V)$ is convergent in $L^2(V)$ for every bounded $V\subset \Omega$.  Let 
 \begin{equation} \label{eq:H1locOmega}
 H^{1,\text{loc}}(\Omega):= \{u\in \Lloc(\Omega):\nabla u\in (\Lloc(\Omega))^n\}=\{u\in \Lloc(\Omega): \chi|_\Omega \,u\in H^1(\Omega), \forall \chi\in C^\infty_0(\R^n)\}
\end{equation} 
and 
\begin{equation} \label{eq:H10locOmega}
H_0^{1,\text{loc}}(\Omega):=\{u\in H^{1,\text{loc}}(\Omega): \chi|_\Omega\, u\in H_0^1(\Omega), \forall \chi\in C^\infty_0(\mathbb{R}^n)\}.
\end{equation}
  
\subsubsection{Capacity and quasi-continuous representatives} \label{sec:cap}

We denote the ($H^1(\R^n)$-)capacity\footnote{See \cite[Rem.~3.2]{HeMo:16} for an account of alternative notations and of other, equivalent definitions for $\mathrm{cap}(E)$.} of a set $E\subset \R^n$ by $\mathrm{cap}(E)$. Following, e.g., \cite[Defns.~2.2.1-2.2.4]{adams2012function}, for compact $E$ we set
\begin{equation} \label{eq:capdef}
\mathrm{cap}(E) :=\inf\{\|u\|_{H^1(\R^n)}^2:u\in C_0^\infty(\R^n) \mbox{ and } u\geq 1 \mbox{ on $E$}\};
\end{equation}
for open $\Omega\subset \R^n$ we define $\mathrm{cap}(\Omega) := \sup_E\mathrm{cap}(E)$, where the supremum is taken over all compact $E\subset \Omega$; for general $E\subset \R^n$ (this consistent with the definition for $E$ compact by \cite[Prop.~2.2.3]{adams2012function}) we set  $\mathrm{cap}(E) := \inf_\Omega\mathrm{cap}(\Omega)$, where the infimum  is taken over all open $\Omega\supset E$.  

A property which holds outside a set of zero capacity is said to hold {\em quasi-everywhere (q.e.)}. A (complex-valued) function $u$ defined q.e.\ on $\R^n$ is called {\em quasi-continuous} if, for any $\varepsilon>0$, there is an open set $\Omega\subset \R^n$ with $\mathrm{cap}(\Omega)<\varepsilon$ such that $u$ is continuous on $\Omega^c$. Importantly, if $u$ is quasi-continuous, then \cite[Thm.~6.1.4]{adams2012function}
\begin{equation} \label{eq:aeqe}
u=0 \mbox{ a.e. (with respect to $n$-dimensional Lebesgue measure)} \; \Rightarrow \; u=0 \mbox{ q.e.}
\end{equation}

The elements of $H^1(\R^n)$ are equivalence classes of functions that are equal almost everywhere. Every $u\in H^1(\R^n)$ has a representative $\tilde u$ (i.e., an element of the equivalence class) that is quasi-continuous. Note that (e.g., \cite[Thm.~6.2.1]{adams2012function}), for each $u\in H^1(\R^n)$, one such quasi-continuous representative is 
\begin{equation} \label{eq:qcrep}
\tilde u(x) := \lim_{r \to 0^+} u_r(x), \quad \mbox{ where }  u_r(x) := \frac{1}{|B_r(x)|}\int_{B_r(x)} u(y)\,\rd y,
\end{equation}
 and the convergence in \eqref{eq:qcrep} holds q.e. Note also that, by \eqref{eq:aeqe}, if $\tilde u_1$ and $\tilde u_2$ are quasi-continuous representatives of $u$, then $\tilde u_1=\tilde u_2$ q.e.

Whenever $u\in L^2(\R^n)$, the convergence \eqref{eq:qcrep} holds a.e.\ (since a.e.\ $x\in \R^n$ is a Lebesgue point) and $\|u - u_r\|_{L^2(\R^n)}\to 0$ as $r\to 0^+$. This last fact follows by the dominated convergence theorem since $|u(x)-u_r(x)|^2$ is dominated by $(|\cM u(x)|+|u(x)|)^2$,  for a.e.\ $x\in \R^n$, where $\cM u$ is the maximal function
$$
\cM u(x) := \sup_{r>0}\frac{1}{|B_r(x)|}\int_{B_r(x)} |u(y)|\,\rd y,
$$
which satisfies (e.g., \cite[Thm.~1.1.1]{adams2012function}) $\|\cM u\|_{L^2(\R^n)}\leq C\|u\|_{L^2(\R^n)}$, for some $C>0$ independent of $u$.
Where $\chi_{B_r(0)}$ denotes the characteristic function of $B_r(0)$ and 
$
\chi_r:= \frac{\chi_{B_r(0)}}{|B_r(0)|},
$ 
for $r>0$, $u_r$ can be written as the convolution
$
u_r = \chi_r*u
$
and, if $u\in H^1(\R^n)$ then 
$
\nabla u_r = \chi_r * \nabla u.
$
From this we deduce by dominated convergence that also $u_r\to u$ in $H^1(\R^n)$ as $r\to 0^+$ if $u\in H^1(\R^n)$.  

\subsubsection{Subspaces of $H^s(\R^n)$}  \label{sec:subspaces}
We need also certain subspaces of $H^s(\R^n)$. For $s\in\mathbb{R}$ and a domain $\Omega\subset \R^n$, let  
 \begin{equation} \label{eq:tildeHs}
 \widetilde{H}^s(\Omega):={\overline{C^{\infty}_0(\Omega)}}^{H^s(\R^n)}.
 \end{equation}
 Importantly, if $u\in \widetilde{H}^1(\Omega)$, then $u=0$ a.e. on $\mathbb{R}^n\setminus\Omega$ and $u|_\Omega\in H_0^1(\Omega)$. Conversely, if $u\in H_0^1(\Omega)$ and the definition of $u$ is extended to $\R^n$ by setting $u=0$ on $\R^n\setminus \Omega$, then the extended $u\in \widetilde H^1(\Omega)$. 
 
 For any closed set $F\subset\mathbb{R}^n$, we set
\begin{equation} \label{eq:HsF}
H^s_F:=\{u\in H^s(\mathbb{R}^n): \text{supp}(u)\subset F\},
\end{equation}
noting that $H^s_F=\{0\}$, if $\mathrm{int}(F)$, the interior of $F$, is empty and $s$ is large enough (see \cite{HeMo:16}); in particular, $H^{-1}_F=\{0\}$ if and only if $\capp(F)=0$ \cite[Thm.~3.15]{HeMo:16}. Note that  
\begin{equation} \label{eq:cap}
\capp(F)>0 \quad \mbox{ if } \quad \dim_H(F)>n-2, 
\end{equation}
but not if $\dim_H(F)<n-2$ \cite[Thm.~2.12]{HeMo:16}. Further  (see, e.g., \cite[Proof of Prop.~2.5(vi)]{Paperdn}),
\begin{equation} \label{eq:boundH}
F \mbox{ compact and } \intt(F)\neq \emptyset \quad \Rightarrow \quad \dimH(\partial F) \geq n-1,
\end{equation}
so that $\capp(\partial F)>0$.

The subspace $(\widetilde H^{-s}(F^c))^\perp \subset H^{-s}(\R^n)$, where $\perp$ denotes orthogonal complement, is the realisation of the dual space $(H^s_F)^*$ via the duality pairing \eqref{eq:dual} restricted to $(\widetilde H^{-s}(F^c))^\perp \times H^s_F$ \cite[Thm.~3.15]{ChHeMo:17}, 
enabling us to identify $(H^s_F)^*=(\widetilde H^{-s}(F^c))^\perp$ and $\left((\widetilde H^{-s}(F^c))^\perp\right)^*=H^s_F$.

For all $s\in\mathbb{R}$ and every domain $\Omega\subset \R^n$ we have
\begin{equation}\label{4}
\widetilde{H}^s(\Omega)\subset H^s_{\overline{\Omega}},
\end{equation}
with equality in many cases, in particular if $\Omega$ is a $C^0$ domain \cite[Thm.~3.29]{mclean2000strongly}, \cite[Lem.~3.15]{ChHeMo:17}.

\bibliography{bibliography}
\bibliographystyle{plain}
\end{document}